\newcommand{\N}{\mathbb{N}}
\newcommand{\Z}{\mathbb{Z}}
\newcommand{\R}{\mathbb{R}}
\newcommand{\conv}{\operatorname{conv}}
\newcommand{\setcond}[2]{\left\{ #1 \,:\, #2 \right\}}
\newcommand{\EndMarker}{\hspace*{\fill}~$\square$}
\newcommand{\nul}{\text{nullspace}}
\newcommand{\old}[1]{{}}
\newcommand{\intr}{\operatorname{int}}
\newcommand{\relintr}{\operatorname{relint}}
\newcommand{\rec}{\operatorname{rec}}
\newcommand{\lin}{\operatorname{lin}}
\newcommand{\bd}{\operatorname{bd}}
\newcommand{\copr}{\mathbin{\Diamond}}
\newtheoremstyle{mythmstyle}
	{\topsep}
	{\topsep}
	{\itshape}
	{}
	{\scshape}
	{.}
	{3pt}
	{}
\theoremstyle{mythmstyle}
\newtheorem{nn}{}[section]
\newtheorem{lemma}[nn]{Lemma}
\newtheorem{theorem}[nn]{Theorem}
\newtheorem{prop}[nn]{Proposition}
\newtheorem{claim}{Claim}
\newtheorem{fact}[nn]{Fact}
\newtheorem{question}[nn]{Question}
\theoremstyle{definition}
\newtheorem{REMARK}[nn]{Remark}
\newenvironment{remark}{\begin{REMARK}}{\EndMarker\end{REMARK}}
\newtheoremstyle{itsemicolon}{}{}{\mdseries\rmfamily}{}{\itshape}{:}{ }{}
\newtheoremstyle{itdot}{}{}{\mdseries\rmfamily}{}{\itshape}{:}{ }{}
\theoremstyle{itdot}
\newtheorem*{msc*}{2010 Mathematics Subject Classification}
\newtheorem*{keywords*}{Keywords}
\numberwithin{equation}{section}
\title{Operations that preserve the covering property \\ of the lifting region}
\author{Amitabh Basu and Joe Paat\footnote{Department of Applied Mathematics and Statistics, The Johns Hopkins University, MD, USA. Both authors were supported in part by NSF grant CMMI1452820.}}
\begin{document}

\maketitle

\begin{abstract}
We contribute to the theory for minimal liftings of cut-generating functions. In particular, we give three operations that preserve the so-called covering property of certain structured cut-generating functions. This has the consequence of vastly expanding the set of undominated cut generating functions which can be used computationally, compared to known examples from the literature. The results of this paper are significant generalizations of previous results from the literature on such operations, and also use completely different proof techniques which we feel are more suitable for attacking future research questions in this area. 
\end{abstract}

\section{Introduction}


\paragraph{Cut-Generating Pairs.} Cut-generating functions are a means to have ``a priori'' formulas for generating cutting planes for general mixed-integer optimization problems. We make this more precise. Let $S$ be a closed subset of $\R^n$ with $0 \not\in S$. Consider the following set, parametrized by matrices $R, P$:

\begin{equation}
	\label{def mixed-int set}
	X_{S}(R,P) := \setcond{(s,y) \in \R_+^k \times \Z_+^\ell }{ Rs + Py\in S },
\end{equation}
where $k, \ell \in \Z_+, n \in \N$, $R \in \R^{n \times k}$ and $P \in \R^{n \times \ell}$ are matrices. Denote the columns of matrices $R$ and $P$ by $r_1,\ldots,r_k$ and $p_1,\ldots,p_{\ell},$ respectively. We allow the possibility that $k =0$ or $\ell = 0$ (but not both). This general model contains as special cases classical optimization models such as (1) Mixed-integer linear programming, (2) Mixed-integer conic and convex programming and (3) Complementarity problems with integer constraints; see~\cite{conforti2013cut}.

Given $n \in \N$ and a closed subset $S \subseteq \R^n$ such that $0 \not\in S$, a \emph{cut-generating pair} $(\psi, \pi)$ for $S$ is a pair of functions $\psi, \pi:\R^n \to \R$ such that
\begin{equation}
	\label{psi pi ineq}
	\sum_{i=1}^k\psi(r_i)s_i + \sum_{j=1}^\ell\pi(p_j)y_j \ge 1
\end{equation}
is a valid inequality (also called a \emph{cut}) for the set $X_S(R,P)$ for every choice of $k, \ell \in \Z_+$ and for all matrices $R \in \R^ {n \times k}$ and $P \in \R^ {n \times \ell}$. Cut-generating pairs thus provide cuts that separate $0$ from the set $X_S(R,P)$ of feasible solutions (it is known that $0 \not\in S$ implies $0$ is not in the closed convex hull of $X_S(R,P)$ - see Lemma 2.1 in~\cite{conforti2013cut}). We emphasize that cut-generating pairs depend on $n$ and $S$ and do \emph{not} depend on $k,\ell$, $R$ and $P$. There is a natural partial order on the set of cut generating pairs; namely, $(\psi', \pi') \leq (\psi, \pi)$ if and only if $\psi' \leq \psi$ and $\pi' \leq \pi$. Due to the nonnegativity of $(s,y)$, if $(\psi', \pi') \leq (\psi, \pi)$ then all the cuts obtained from $(\psi, \pi)$ are dominated by the cuts obtained from $(\psi', \pi')$. The minimal elements under this partial ordering are called {\em minimal cut-generating pairs.} It is verified in Proposition~\ref{prop:minimality-lifting} that every valid cut-generating pair is dominated by a minimal one. Thus, one can concentrate on the minimal cut-generating pairs.

\paragraph{Efficient procedures for cut-generating pairs.} Several deep structural results were obtained by Johnson~\cite{johnson} about minimal cut-generating functions for $S$ when $S$ is a translated lattice, i.e., $S= b + \Z^n$ for some $b \in \R^n\setminus \Z^n$. However, a major drawback is that the theory developed is abstract and difficult to use from a computational perspective. A recent approach has been to restrict attention to a specific class of minimal cut-generating pairs for which we can give efficient procedures to compute the values. In particular, given some specific matrices $R, P$, we want to be able to compute the coefficients $\psi(r_i)$ and $\pi(p_j)$ quickly. For this purpose, a relaxed model was proposed~\cite{alww,BorCor,dey2010constrained,conforti2013cut}:

\begin{equation}
	\label{def mixed-int set 2}
	X_{S}(R) := \setcond{s \in \R_+^k }{ Rs \in S }
\end{equation}

A {\em cut-generating function for} $S$ is a function $\psi :\R^n \to \R$ such that \begin{equation}
	\label{psi ineq}
	\sum_{i=1}^k\psi(r_i)s_i \ge 1
\end{equation}
is a valid inequality for the set $X_S(R)$ for every choice of $k\in \Z_+$ and a matrix $R \in \R^ {n \times k}$. For a given $S \subseteq \R^n\setminus\{0\}$, we stress the distinction between a cut-generating {\em pair} for $S$ as defined in~\eqref{psi pi ineq}, and a cut-generating {\em function} for $S$, as defined in~\eqref{psi ineq}. The important distinction is that model~\eqref{def mixed-int set 2} has no integer variables, as opposed to~\eqref{def mixed-int set}.

The notion of a minimal cut-generating function can be analogously defined and it can be shown along the lines of Proposition~\ref{prop:minimality} that all cut-generating functions are dominated by minimal ones. It turns out that for many specially structured $S$, we obtain closed-form formulas for minimal cut-generating functions. This is done via an important connection that was observed between the so-called {\em $S$-free convex sets} and minimal cut-generating functions. Given $S \subseteq \R^n$, a convex set $B$ is called $S$-free if $\intr(B) \cap S = \emptyset$. A {\em maximal} $S$-free set is an $S$-free convex set that is inclusion wise maximal. When $S$ is the intersection of a translated lattice and a polyhedron, i.e., $S = (b + \Z^n) \cap P$ for some vector $b \in \R^n\setminus \Z^n$ and some rational polyhedron $P$, it was shown in~\cite{dey2010constrained,bccz2} that maximal $S$-free sets are polyhedra, and further, a function $\psi:\R^n \to \R$ is a minimal cut-generating function for $S$ if and only if there exists a maximal $S$-free polyhedron $B$ containing the origin in its interior given by \begin{equation}\label{eq:B-desc} B = \{r \in \R^n : a_i\cdot r \leq 1\;\; i\in I\}\end{equation} and \begin{equation}\label{eq:psi-formula} \psi(r) = \max_{i \in I} a_i \cdot r.\end{equation}This connection between maximal $S$-free sets and minimal cut-generating functions was further developed in~\cite{conforti2013cut}. The exciting observation is that we can compute the coefficients $\psi(r_i)$ in~\eqref{psi ineq} very quickly using the formula~\eqref{eq:psi-formula}. The question is: can we find similar formulas for cut-generating {\em pairs} ?

This led Dey and Wolsey~\cite{dw2008} to import the idea of {\em monoidal strengthening} into this context. Monoidal strengthening was a method introduced by Balas and Jeroslow~\cite{baljer} to strengthen cutting planes by using integrality information. This inspired Dey and Wolsey to define the notion of a {\em lifting} of a cut-generating function $\psi$ as any function $\pi : \R^n \to \R$ such that $(\psi, \pi)$ together forms a cut-generating pair. Given a fixed $\psi$ which is a cut-generating function for $S$, the set of all liftings of $\psi$ is partially ordered by pointwise dominance and one can thus define {\em minimal liftings}. Proposition~\ref{prop:minimality-lifting} shows that for any cut-generating function $\psi$ (not necessarily minimal) and any lifting $\pi$ of $\psi$, $\pi$ is dominated by a minimal lifting of $\psi$. It is not hard to observe that if $\psi$ is a minimal cut-generating function, and $\pi$ is a minimal lifting of $\psi$, then $(\psi, \pi)$ is a minimal cut-generating pair. Thus, this becomes an approach to obtain formulas for minimal cut-generating pairs: start with a minimal cut-generating function $\psi$ for $S$ which has an easily computable formula like~\eqref{eq:psi-formula} and find minimal liftings $\pi$ for $\psi$. Hopefully, a formula for $\pi$ can also be derived easily from the formula for $\psi$. This was explicitly proved to be the case under certain conditions in~\cite{averkov-basu-lifting}. This provides evidence to support Dey and Wolsey's method for finding efficient procedures to compute cut-generating pairs.

\begin{remark}
Not every minimal cut-generating pair $(\psi, \pi)$ for $S$ is of the type that $\psi$ is a minimal cut-generating function for $S$ and $\pi$ is a minimal lifting for $\psi$. The Dey and Wolsey approach outlined above focuses on a subset of minimal cut-generating functions so as to be able to compute with these.
\end{remark}


\paragraph{Unique minimal liftings.} There is some regularity in the structure of minimal liftings. Given an arbitrary $S \subseteq \R^n\setminus\{0\}$ define \begin{equation}\label{eq:W}W_S := \{w\in \R^n : s+\lambda w\in S~, \forall s\in S, \forall \lambda\in \Z\}.\end{equation} Proposition~\ref{prop:periodic} shows that if $\psi$ is a cut-generating function (not necessarily minimal) for $S$, then any minimal lifting $\pi$ is periodic along $W_S$, i.e., $\pi(p + w) = \pi(p)$ for all $p \in \R^n$ and $w \in W_S$.

A central object in the study of minimal liftings is the {\em lifting region} first introduced in~\cite{dw2008}. Let $\psi$ be a minimal cut-generating function for $S$. Define \begin{equation}\label{eq:lifting-region} R_\psi := \{r \in \R^n : \psi(r) = \pi(r) \textrm{ for every minimal lifting }\pi \textrm{ of }\psi\}.\end{equation}

Since every minimal lifting is periodic along $W_S$, if $R_\psi + W_S = \R^n$, then $\psi$ has a {\em unique} minimal lifting. It was shown in~\cite{bcccz} that for the special case when $S$ is a translated lattice, this is a characterization, i.e., $\psi$ has a unique minimal lifting if and only if $R_\psi + W_S = \R^n$. Note that when $S = b + \Z^n$, then $W_S = \Z^n$. In this situation, the question of whether $\psi$ has a unique minimal lifting or not is equivalent to the geometric question of whether $R_\psi + \Z^n = \R^n$, i.e., whether $R_\psi$ covers $\R^n$ by integer translates.

For a general $S$ and a cut-generating function $\psi$ for $S$, if $R_\psi + W_S = \R^n$ then not only do we have a unique minimal lifting, but we can also express this unique minimal lifting compactly in terms of $\psi$:

\begin{equation}\label{eq:formula-for-lifting}
\psi^\ast(r) = \inf_{w \in W_s} \psi(r + w)
\end{equation}

In fact, Proposition~\ref{prop:psi-ast} shows something stronger: $\psi^\ast$ is a minimal lifting if $R_\psi + W_S = \R^n$ (and thus must be the unique minimal lifting) and the infimum in~\eqref{eq:formula-for-lifting} is attained by any $w$ such that $r + w \in R_\psi$. Therefore, if an explicit description for $R_\psi$ can be obtained, then the coefficient $\psi^\ast(p_j)$ for the unique lifting can be computed by finding the $w$ such that $p_j + w \in R_\psi$, and then using the formula for $\psi(p_j+w)$\footnote{For the special case when $S$ is the intersection of a translated lattice and a polyhedron, a proof similar to Proposition 1.1 in~\cite{averkov-basu-lifting} can be used to show that $\psi^\ast(p)$ can be computed in polynomial time when the dimension $n$ is considered fixed, assuming the data is rational.}. A central result in~\cite{bcccz} was to show that when $S$ is the intersection of a translated lattice and a rational polyhedron, $R_\psi$ can be described as the finite union of full dimensional polyhedra, each of which has an explicit inequality description.

In summary, in this approach of using liftings of minimal cut-generating functions to obtain computational efficiency with cut-generating pairs, two questions are of utmost importance:

\begin{itemize}
\item[(i)] For which kinds of sets $S$ can we find explicit descriptions of $R_\psi$ for any minimal cut-generating function $\psi$ for $S$? The most general $S$ that we know the answer to is when $S$ is the intersection of a translated lattice with a rational polyhedron~\cite{bcccz}.
\item[(ii)] For which pairs $S,\psi$, where $\psi$ is a minimal cut-generating function for $S$, is $R_\psi + W_S = \R^n$ ?
\end{itemize}

\paragraph{Statement of Results.} In this paper, we make some progress towards the covering question (ii) stated above for the special case when $S$ is the intersection of a translated lattice and a rational polyhedron, i.e., $S = (b+ \Z^n) \cap P$ where $b\in \R^n\setminus\Z^n$ is a vector, and $P\subseteq \R^n$ is a rational polyhedron. As mentioned earlier, the minimal cut-generating functions for such $S$ are in one-to-one correspondence with maximal $S$-free sets containing the origin in their interior. For any such maximal $S$-free set $B$, we refer to the lifting region $R_\psi$ for the minimal cut-generating function $\psi$ corresponding to $B$ by $R(S,B)$, to emphasize the dependence on $S$ and $B$. We say $R(S,B)$ has the {\em covering property} if $R(S,B) + W_S = \R^n$. When $S$ is clear from the context, we will also say $B$ has the {\em covering property} if $R(S,B)$ has the covering property.

\begin{enumerate}
\item Let $S$ be a translated lattice intersected with a rational polyhedron and let $B$ be a maximal $S$-free set with the origin in its interior. Then $R(S,B) + W_S = \R^n$ if and only if $R(T(S), T(B)) + W_{T(S)} = \R^n$ for all invertible affine transformations $T: \R^n \to \R^n$ such that $T(B)$ also contains the origin in its interior. In other words, the covering property is preserved under invertible affine transformations. This is the content of Theorem~\ref{thm:trans-inv}. This result was first proved for the special case when $S$ is a translated lattice, $B$ is a maximal $S$-free {\em simplicial} polytope and $T$ is a simple translation~\cite{basu2012unique}. In~\cite{averkov-basu-lifting}, the result was generalized to all maximal $S$-free sets when $S$ is a translated lattice and $T$ is a simple translation. Here we generalize the result to all maximal $S$-free sets where $S$ is the intersection of a translated lattice and a rational polyhedron, and allow for $T$ to be any general invertible affine transformation (which, of course, includes simple translations as a special case). Moreover, the proofs in~\cite{basu2012unique} and~\cite{averkov-basu-lifting} are based on volume arguments, whereas our proofs are based on a completely different topological argument. It makes the proof much cleaner, albeit at the expense of using more sophisticated topological tools like the ``Invariance of Domain" theorem. The volume arguments are difficult to extend to tackle more general $S$ sets and general affine transformations $T$, and hence we feel that our approach has a better chance of success for attacking the general covering question (ii) above.

\item In Section~\ref{s:co-product}, we define a binary operation on polyhedra that preserves the covering property. Namely, given two polyhedra $X_1$ and $X_2$, we define the {\em coproduct} $X_1 \copr X_2$ which is a new polyhedron that has nice properties in terms of the lifting region. More precisely, let $n = n_1 + n_2$. For $i\in \{1,2\}$, let $S_i = P_i\cap \Lambda_i$, where $P_i \subseteq \R^{n_i}$ is a rational polyhedron and $\Lambda_i$ is a translated lattice in $\R^{n_i}$. 
Theorem~\ref{thm:coproduct} shows that if $B_i$ is maximal $S_i$-free such that $R(S_i, B_i)$ has the covering property for $i\in \{1,2\}$, then $\frac{B_1}{\mu}\copr \frac{B_2}{1-\mu}$ is maximal $S_1\times S_2$-free and $R(S_1 \times S_2,\frac{B_1}{\mu}\copr \frac{B_2}{1-\mu})$ has the covering property for every $\mu\in (0,1)$. This is an extremely useful operation to create higher dimensional maximal $S$-free sets with the covering property by ``gluing'' together lower dimensional such sets. This result is a generalization of a result from~\cite{averkov-basu-lifting}, where this was shown when $S$ is a translated lattice, and only lattice-free {\em polytopes} were considered. Here we give the result for more general $S$ sets, and perhaps more interestingly, extend the operation to unbounded $S$-free sets. It is worth noting that a trivial extension of the operation defined in~\cite{averkov-basu-lifting} does not work in the more general setting. The operation defined in this manuscript utilizes prepolars which seems to be the right way to generalize and also leads to simpler proofs compared to~\cite{averkov-basu-lifting}; see Section~\ref{s:co-product} for a discussion.

\item We show that if a sequence of maximal $S$-free sets all having the covering property, converges to a maximal $S$-free set (in a precise mathematical sense), then the ``limit'' set also has the covering property; see Theorem~\ref{thm:LimitOfB}. This result is a generalization of a result from~\cite{averkov-basu-lifting} where this was shown when $S$ is a translated lattice, and only lattice-free polytopes were considered. Here we consider general $S$ sets and allow unbounded $S$-free sets.


\end{enumerate}

The importance of these results in terms of cutting planes is the following. Result 1. above has important practical consequences in generating cutting planes, even in the special case when the affine transformation $T$ is a simple translation. The cutting planes from maximal $S$-free sets for mixed-integer linear programs are useful for cutting off a basic feasible solution of the LP relaxation. Different basic feasible solutions correspond to different $S$ sets, translated by a vector. The translation theorem tells us that if a certain $S$-free set $B$ has good formulas because it has the covering property at a particular basic feasible solution, then $B$ will give rise to good formulas at other basic feasible solutions as well, even though the $S$ set has changed because the basic feasible solution has changed. The situation at the new basic feasible solution can be modeled by translating $S$ and $B$.

Work by Dey and Wolsey~\cite{dey2010constrained,dw2008} has established a ``base set" of maximal $S$-free sets with the covering property in $\R^2$. By iteratively applying the three operations stated in results 1., 2. and 3. above, we can then build a vast (infinite) list of maximal S-free sets (in arbitrarily high dimensions) with the covering property, enlarging this ``base set". Moreover, in~\cite{averkov-basu-lifting}, specific classes of maximal $S$-free polytopes in general dimensions were shown to have the covering property. This contributes to a larger ``base set" from which we can build using the operations in results 1., 2. and 3. Not only does this recover all the previously known sets with the covering property, it vastly expands this list. Earlier, ad hoc families of S-free sets were proven to have the covering property - now we have generic operations to construct infinitely many families. See Section~\ref{sec:examples} for more discussion. 
From a broader perspective, we believe it makes a contribution in the modern thrust on obtaining efficiently computable formulas for computing cutting planes, by giving a much wider class of cut-generating functions whose lifting regions have the covering property. As discussed earlier, this property is central for obtaining computable formulas for minimal liftings.

\section{Preliminaries}

We use $\conv(X)$ to denote the convex hull of a set $X$. We use $\intr(X), \relintr(X), \bd(X)$ to denote the interior, the relative interior and the boundary of a set $X$, respectively. The recession cone and lineality space of a convex set $C$ will be denoted by $\rec(C)$ and $\lin(C)$, respectively. We denote the polar of a convex set $C$ by $C^*$. For sets $A,B$, $A+B := \{a + b: a\in A \;\; b\in B\}$ is the Minkowski sum of sets $A\subseteq \R^n$ and $B\subseteq \R^n$ (when $B$ is a singleton $\{b\}$, we will use $A + b$ to denote $A + \{b\}$). For a set $A\subseteq \R^n$ and $\mu \in \R$, $\mu A := \{\mu t: t\in A\}$. If $A_1\subseteq \R^{n_1}$ and $A_2 \subseteq \R^{n_2}$, then $A_1 \times A_2$ will denote the Cartesian product $\{(a_1, a_2) \in \R^{n_1 + n_2} : a_1 \in A_1, a_2 \in A_2\}$.

A {\em lattice} in $\R^n$ is a subset of $\R^n$ of the form $\{\lambda_1 v_1 + \ldots + \lambda_n v_n: \lambda_i \in \Z\}$ where $v_1, \ldots, v_n$ are linearly independent vectors. When these generating vectors are the standard unit vectors in $\R^n$, we get the standard integer lattice $\Z^n$. A {\em lattice subspace} of a lattice $\Lambda$ is a linear subspace which has a basis composed of vectors from $\Lambda$. We say a set $S$ is a {\em truncated affine lattice} if $S = (b+ \Lambda) \cap C$ for some lattice $\Lambda$ in $\R^n$, some $b \in \R^n\setminus \Lambda$, and some convex set $C \subseteq \R^n$; if $C=\R^n$ we call $S$ an {\em affine lattice} or a {\em translated lattice}. Note that $0 \not\in S$ by construction. In general, for a truncated affine lattice $S$, $\conv(S)$ is not a polyhedron; it may not even be closed~\cite{dey2013some}. If $\conv(S)$ is a polyhedron, we specify further by saying $S$ is a {\em polyhedrally-truncated affine lattice}. In this case, $S = (b + \Lambda) \cap \conv(S)$. The following fact follows from Theorem 5 in~\cite{dey2013some}.

\begin{fact}\label{fact:lin-space-lattice}
If $\conv(S)$ is a polyhedron for a truncated affine lattice $S$, then $\lin(\conv(S))$ is a lattice subspace.
\end{fact}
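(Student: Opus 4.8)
\emph{Proof proposal.} The assertion is essentially a packaging of the structural results of Dey, Mor\'an R.\ and Vielma, so the work is in setting up the reduction. First I would reduce to the case $\Lambda = \Z^n$: pick an invertible linear map $\phi:\R^n\to\R^n$ with $\phi(\Lambda)=\Z^n$. Then $\phi(S)=(\phi(b)+\Z^n)\cap\phi(C)$ is again a truncated affine lattice, $\conv(\phi(S))=\phi(\conv(S))$ is again a polyhedron, and $\lin(\conv(\phi(S)))=\phi(\lin(\conv(S)))$. Since $\phi$ carries subspaces with an integer basis back to subspaces with a basis in $\Lambda$, it suffices to prove the statement when $\Lambda=\Z^n$. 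Writing moreover $(b+\Z^n)\cap C=b+\bigl(\Z^n\cap(C-b)\bigr)$ and using that $\conv$, $\rec$ and $\lin$ are translation-equivariant, we may further take $b=0$, so that $S=\Z^n\cap C$ for a convex set $C$, and the goal becomes: $\lin(\conv(S))$ has an integer basis, i.e.\ is a rational subspace.

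At this point I would invoke Theorem~5 of~\cite{dey2013some}: since $\conv(S)=\conv(C\cap\Z^n)$ is a polyhedron (in particular closed), its recession cone $\rec(\conv(S))$ is a rational polyhedral cone, hence generated by integer vectors. Consequently $\lin(\conv(S))=\rec(\conv(S))\cap\bigl(-\rec(\conv(S))\bigr)$ is an intersection of two rational cones, so it is a rational linear subspace, and a rational subspace automatically admits a basis of integer vectors. Transporting this conclusion back through $\phi^{-1}$ shows that, for the original $S$, $\lin(\conv(S))$ has a basis of vectors of $\Lambda$, which is precisely the statement that it is a lattice subspace.

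The only thing that genuinely needs to be checked is that each hypothesis survives the reduction — that polyhedrality of $\conv(S)$ is preserved under the linear change of coordinates and the translation (immediate, since images of polyhedra under affine bijections are polyhedra), and that ``lattice subspace'' is exactly the notion transported correctly by $\phi$ (immediate from the definition on p.~\pageref{fact:lin-space-lattice}). The substantive content — that polyhedrality of $\conv(C\cap\Z^n)$ forces rationality of its recession cone, and hence of its lineality space — is supplied entirely by~\cite[Theorem~5]{dey2013some}, so this is where the real difficulty lies, and it is already done in the literature; one could in principle give a self-contained argument by projecting out the maximal rational subspace of $\lin(\conv(S))$ and deriving a contradiction, but re-deriving that fact would duplicate~\cite{dey2013some} and the citation is cleaner.
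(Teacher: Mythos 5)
Your proposal is correct and follows essentially the same route as the paper, which offers no argument beyond attributing the fact to Theorem~5 of~\cite{dey2013some}; you merely make explicit the routine reduction (change of lattice basis to $\Z^n$ and translation by $b$, under which polyhedrality, $\lin$, and the notion of lattice subspace are all transported correctly) before invoking that theorem to get rationality of the recession cone and hence of the lineality space. No gaps.
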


\paragraph{Properties of the translation set $W_S$.} Given any arbitrary set $S \subseteq \R^n$, we collect some simple observations about the set $W_S$ defined in~\eqref{eq:W}. Note that $W_S$ is a subgroup of $\R^n$, i.e., $0 \in W_S$, $w_1 + w_2 \in W_S$ for every $w_1, w_2 \in W_S$ and $-w \in W_S$ for every $w\in W_S$. We observe below how $W_S$ changes as certain operations are performed on $S$. The proofs are straightforward and are relegated to the Appendix.

\begin{prop}\label{prop:W-manipulation}
The following are true: 
\begin{itemize}
\item[(i)] $W_{M(S)+m} = MW_S$ for all sets $S\subseteq\R^n$, translation vectors $m\in \R^n$, and invertible linear transformations $M:\R^n\to \R^n$. In particular, $W_{\mu S} = \mu W_{S}$ for all sets $S\subseteq \R^n$ and all $\mu \in \R\setminus\{0\}$. 

\item[(ii)] $W_{S_1\times S_2} = W_{S_1}\times W_{S_2}$ for all sets $S_1 \subseteq \R^{n_1}, S_2 \subseteq \R^{n_2}$. Note that $S_1 \times S_2 \subseteq \R^{n_1 + n_2}$.
\end{itemize}
\end{prop}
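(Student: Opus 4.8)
The plan is to verify both identities directly from the definition~\eqref{eq:W} of $W_S$ by double inclusion; nothing beyond elementary set manipulation is needed, which is why these facts can safely be relegated to an appendix. Throughout I would tacitly assume the sets in question are nonempty, since the degenerate empty cases are either vacuous or irrelevant for the applications in this paper.

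For part (i), I would first show $MW_S \subseteq W_{M(S)+m}$. Given $w\in W_S$ and an arbitrary point $t = Ms + m \in M(S)+m$ with $s\in S$, for every $\lambda\in\Z$ we have $t + \lambda(Mw) = M(s+\lambda w) + m$, which lies in $M(S)+m$ because $s+\lambda w\in S$ by the definition of $W_S$; hence $Mw\in W_{M(S)+m}$. For the reverse inclusion, take $v\in W_{M(S)+m}$ and set $w := M^{-1}v$, where invertibility of $M$ is used. For any $s\in S$ and $\lambda\in\Z$, applying the defining property of $v$ to the point $Ms+m\in M(S)+m$ gives $Ms + m + \lambda v = Ms' + m$ for some $s'\in S$; cancelling $m$ and applying $M^{-1}$ yields $s' = s + \lambda w$, so $s+\lambda w\in S$ and thus $w\in W_S$, i.e. $v = Mw\in MW_S$. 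The special case $W_{\mu S} = \mu W_S$ for $\mu\neq 0$ then follows by taking $M$ to be the scaling $x\mapsto \mu x$ and $m=0$.

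For part (ii), the inclusion $W_{S_1}\times W_{S_2}\subseteq W_{S_1\times S_2}$ is immediate: if $w_i\in W_{S_i}$ and $(s_1,s_2)\in S_1\times S_2$, then $(s_1,s_2) + \lambda(w_1,w_2) = (s_1+\lambda w_1,\, s_2+\lambda w_2)\in S_1\times S_2$ for all $\lambda\in\Z$. For the reverse inclusion I would use nonemptiness to ``freeze'' a coordinate: fix some $\bar s_2\in S_2$; given $(w_1,w_2)\in W_{S_1\times S_2}$ and any $s_1\in S_1$, $\lambda\in\Z$, the point $(s_1,\bar s_2)\in S_1\times S_2$ satisfies $(s_1+\lambda w_1,\, \bar s_2 + \lambda w_2)\in S_1\times S_2$, whence $s_1 + \lambda w_1\in S_1$; so $w_1\in W_{S_1}$, and by the symmetric argument $w_2\in W_{S_2}$.

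Since every step is a one-line manipulation, there is no genuine obstacle here; the only points requiring any care are the use of invertibility of $M$ in part (i) (to solve $Ms' = Ms + \lambda v$ for $s'$) and the use of nonemptiness of each factor in part (ii) (so that one coordinate can be held fixed while the other ranges over its set).
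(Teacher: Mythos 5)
Your proof is correct and follows essentially the same route as the paper's, which simply unwinds the definition of $W_S$ through a chain of equivalences rather than by explicit double inclusion. Your explicit use of nonemptiness in part (ii) is a detail the paper's argument glosses over --- the inclusion $W_{S_1\times S_2}\subseteq W_{S_1}\times W_{S_2}$ genuinely fails when exactly one factor is empty --- so flagging that hypothesis is a small improvement rather than a deviation.
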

%

When $S$ is a nonempty truncated affine lattice, $W_S$ is a lattice; in particular, we can rewrite $W_S$ as the intersection of $\lin(\conv(S))$ and the lattice $\Lambda$. 

\begin{prop}\label{prop:W_S-characterization}
Let $S = (b+ \Lambda)\cap C$ be a nonempty truncated affine lattice. Then $W_S = \lin(\conv(S))\cap \Lambda$.
\end{prop}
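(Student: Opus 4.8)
The plan is to prove the two inclusions $W_S \subseteq \lin(\conv(S)) \cap \Lambda$ and $\lin(\conv(S)) \cap \Lambda \subseteq W_S$ separately, using the structure of $S$ as a truncated affine lattice $S = (b+\Lambda)\cap C$.

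\medskip
\noindent\emph{The inclusion $\lin(\conv(S))\cap\Lambda \subseteq W_S$.} Let $w \in \lin(\conv(S)) \cap \Lambda$, and fix $s \in S$ and $\lambda \in \Z$. I must show $s + \lambda w \in S$, i.e., that $s + \lambda w \in b + \Lambda$ and $s + \lambda w \in C$. The first is immediate: $s \in b + \Lambda$ and $\lambda w \in \Lambda$ (since $w\in\Lambda$ and $\Lambda$ is a group), so $s + \lambda w \in b + \Lambda$. For the second, I would use that $w \in \lin(\conv(S))$: since the lineality space of a convex set is preserved under translation by points of the set, $\conv(S) + \R w = \conv(S)$, hence $s + \mu w \in \conv(S)$ for every $\mu \in \R$. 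Now I need the sharper fact that $s + \lambda w$ lands in $C$ itself, not merely in $\conv(S)$. This is where I would invoke that $S = (b+\Lambda)\cap C$ and that points of $b+\Lambda$ lying in $\conv(S)$ actually lie in $S$: more carefully, $\conv(S) \subseteq \conv(C)$, but to conclude $s+\lambda w\in C$ I should argue that $\conv(S)$ and $C$ agree "enough" — the cleanest route is to note that since $s\in C$ and $s+\mu w\in\conv(S)$ for all $\mu$, and $\conv(S)\subseteq \overline{\conv(C)}$... Here a subtlety appears: $C$ need not be closed, and $\conv(S)$ need not be closed. I expect the right move is to reduce to the case $C = \conv(S)$ (replacing $C$ by $\conv(S)\cap C = \conv(S)$, noting $S = (b+\Lambda)\cap\conv(S)$ always holds since $S\subseteq\conv(S)$ and $S\subseteq b+\Lambda$), so WLOG $C=\conv(S)$ is convex, and then $w\in\lin(C)$ directly gives $s+\lambda w\in C$.

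\medskip
\noindent\emph{The inclusion $W_S \subseteq \lin(\conv(S))\cap\Lambda$.} Let $w \in W_S$. First, $w \in \Lambda$: pick any $s \in S$ (nonempty by hypothesis); then $s + 1\cdot w \in S \subseteq b + \Lambda$ and $s \in b+\Lambda$, so $w = (s+w) - s \in \Lambda$. Second, $w \in \lin(\conv(S))$: for every $s \in S$ and every $\lambda \in \Z$ we have $s + \lambda w \in S \subseteq \conv(S)$. Thus $\conv(S)$ contains the entire two-sided arithmetic progression $\{s + \lambda w : \lambda \in \Z\}$, and since $\conv(S)$ is convex it contains the convex hull of this progression, which is the whole line $s + \R w$. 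Therefore $\conv(S)$ contains a full line in direction $w$ through the point $s$; since the recession cone and lineality space of a convex set are independent of the base point, this forces $w \in \lin(\conv(S))$. (Concretely: $\conv(S) \supseteq s + \R w$ means $\pm w \in \rec(\conv(S))$, hence $w \in \lin(\conv(S))$.)

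\medskip
\noindent\emph{Main obstacle.} The routine direction is $W_S \subseteq \lin(\conv(S))\cap\Lambda$; the delicate point is the reverse inclusion, specifically ensuring that translating a point of $S$ by $\lambda w$ lands back in the \emph{truncating set} $C$ and not merely in $\conv(S)$. I expect the resolution to be the observation that $S = (b+\Lambda)\cap\conv(S)$ for \emph{any} truncated affine lattice (so one may always take the truncating convex set to be $\conv(S)$ itself), after which $w \in \lin(\conv(S))$ makes the membership $s+\lambda w\in\conv(S)$ automatic and hence $s+\lambda w\in(b+\Lambda)\cap\conv(S) = S$. If the intended reading of the definition does not permit replacing $C$ by $\conv(S)$, an alternative is to argue directly that $w\in\lin(\conv(S))$ together with $s,s-w,s+w\in S\subseteq C$ and convexity-type properties of $C$ suffice; but I believe the $\conv(S)$ reduction is the cleanest and is the step I would foreground.
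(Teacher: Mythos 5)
Your overall strategy is the same as the paper's and both inclusions are essentially right, but two justifications need tightening.

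For $W_S \subseteq \lin(\conv(S))$: you deduce $w\in\lin(\conv(S))$ from the fact that $\conv(S)$ contains the line $s+\R w$ through a \emph{single} point $s$, citing base-point independence of the recession cone and lineality space. That fact holds for closed convex sets, but the paper explicitly warns that $\conv(S)$ need not be closed, and for non-closed convex sets it fails: one can build a convex set containing a full line through one of its points whose direction is not in the lineality space (e.g.\ an open half-plane together with a bounded segment of its boundary line). The repair is immediate and is what the paper does: since $s+ w\in S$ for \emph{every} $s\in S$, write an arbitrary $y\in\conv(S)$ as $\sum_i\lambda_i s_i$ with $s_i\in S$ and observe $y+w=\sum_i\lambda_i(s_i+w)\in\conv(S)$; applying the same to $-w\in W_S$ gives $\conv(S)+\R w=\conv(S)$ directly.

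For the reverse inclusion, the ``main obstacle'' you foreground dissolves once you recall that $C$ is \emph{convex} by the paper's definition of a truncated affine lattice: $S\subseteq C$ and convexity give $\conv(S)\subseteq C$ outright, so $s+\lambda w\in\conv(S)\subseteq C$ with no reduction needed. Note also that your justification of $S=(b+\Lambda)\cap\conv(S)$ only establishes the inclusion $S\subseteq(b+\Lambda)\cap\conv(S)$; the reverse inclusion is precisely the missing fact $\conv(S)\subseteq C$, so the reduction as written is incomplete on its own terms. With these two repairs your argument coincides with the paper's proof.
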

\begin{proof} Let $w\in W_S$. For each $y\in \conv(S)$, we can write $y = \sum_{i=1}^n \lambda_is_i$ for $\lambda_i\in [0,1]$, $\sum_{i=1}^n \lambda_i = 1$, and $s_i\in S$. It follows that
\begin{equation*}
y+w = \left(\sum_{i=1}^n \lambda_i s_i\right)+w = \sum_{i=1}^n \lambda_i (s_i+w) \in \conv(S),
\end{equation*}
where the inclusion follows from the definition of $W_S$. Since $-w$ is also in $W_S$, this shows that $w \in \lin(\conv(S))$. As $S$ is nonempty, there exists a $s\in S$, and we can write $s=b+z_1$ and $s+w = b+z_2$ for $z_1, z_2\in \Lambda$. Thus, $w = z_2-z_1\in\Lambda$. Hence, $W_S\subseteq \lin(\conv(S))\cap \Lambda$.

Conversely, take $w\in \lin(\conv(S))\cap \Lambda$. For $\lambda\in\Z$ and $s\in S$, it follows that $s+\lambda w \in \conv(S)\subseteq C$. Furthermore, $s=b+z_1$ for $z_1\in\Lambda$, and so $s+\lambda w = (z_1+\lambda w)+b \in \Lambda+b$. Therefore $s+\lambda w\in S$, indicating that $\lin(\conv(S))\cap \Lambda \subseteq W_S$.
\end{proof}

\paragraph{Polyhedrally-truncated affine lattices and an explicit description of the lifting region.}
 Let $S$ be a polyhedrally-truncated affine lattice. Let $B = \{r \in \R^n : a_i\cdot r \leq 1\;\; i\in I\}$ be a maximal $S$-free set with the origin in its interior. 
For each $s \in B \cap S$, define the {\em spindle} $R(s,B)$ in the following way. Let $k \in I$ such that $a_k\cdot s = 1$; such an index exists since $B$ is $S$-free, and therefore, $s$ is on the boundary of $B$. Then

$$R(s,B) := \{r\in \R^n : (a_i - a_k)\cdot r \leq 0,\;\; (a_i - a_k)\cdot (s - r) \leq 0 \;\quad \forall i \in I\}.$$

Define \begin{equation}\label{eq:lifting-desc}R(S,B) := \bigcup_{s \in B\cap S} R(s,B).\end{equation}
It was shown in~\cite{bcccz} that when $S$ is a polyhedrally-truncated affine lattice with $\Lambda = \Z^n$, $R(S,B)$ is the lifting region $R_\psi$ defined in~\eqref{eq:lifting-region} for $\psi$ when $\psi$ is the minimal cut-generating function corresponding to $B$ as defined by~\eqref{eq:psi-formula}. Since every $\psi$ is of this form when $S$ is of this type, this gives an explicit description of the lifting region for any minimal cut-generating function in this situation.

In the rest of the paper, we will consider polyhedrally-truncated affine lattices $S$ and analyze the properties of $R(S,B)$ as defined in~\eqref{eq:lifting-desc} for maximal $S$-free sets $B$ given by~\eqref{eq:B-desc}. We will also sometimes abbreviate $R(s,B)$ to $R(s)$ when the set $B$ is clear from context.

\paragraph{Topological Facts.} We collect here some basic tools from topology that will be used in our analysis.



\begin{lemma}\label{lemma:patching-2}[Theorem 9.4 in~\cite{dugundji1970topology}]
Let $P_\omega \subseteq \R^n, \omega \in \Omega$ be a (possibly infinite) family of polyhedra such that any bounded set intersects only finitely many polyhedra, and $\bigcup_{\omega\in \Omega} P_\omega = \R^n$. Suppose there is a family of functions $A_\omega: P_\omega \to \R^n, \omega \in \Omega$ such that $A_\omega$ is continuous over $P_\omega$ for each $\omega\in \Omega$, and for every pair $\omega_1, \omega_2 \in \Omega$, $A_{\omega_1}(x) = A_{\omega_2}(x)$ for all $x \in P_{\omega_1} \cap P_{\omega_2}$. Then there is a unique, continuous map $A:\R^n \to \R^n$ that equals $A_\omega$ when restricted to $P_\omega$ for each $\omega\in \Omega$.\end{lemma}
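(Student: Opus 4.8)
The plan is to build $A$ by the obvious gluing and then establish continuity by using the local finiteness of the cover to reduce, near each point, to the classical finite pasting lemma.

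\emph{Construction and uniqueness.} First I would define $A:\R^n\to\R^n$ by $A(x):=A_\omega(x)$ for any $\omega\in\Omega$ with $x\in P_\omega$. Such an $\omega$ exists because $\bigcup_{\omega\in\Omega}P_\omega=\R^n$, and the value does not depend on the chosen $\omega$ by the hypothesis that $A_{\omega_1}$ and $A_{\omega_2}$ agree on $P_{\omega_1}\cap P_{\omega_2}$. By construction $A|_{P_\omega}=A_\omega$ for every $\omega$; and any map with this property must agree with $A$ on each $P_\omega$, hence on all of $\R^n$, which gives uniqueness.

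\emph{Continuity.} Continuity is a local property, so it is enough to produce, for each $x_0\in\R^n$, an open neighborhood of $x_0$ on which $A$ is continuous. Fix $x_0$ and let $U$ be an open ball around $x_0$; its closure $\overline U$ is bounded, so by hypothesis it meets only finitely many of the polyhedra, say $P_{\omega_1},\dots,P_{\omega_m}$. Since the $P_\omega$ cover $\R^n$ and no other $P_\omega$ meets $\overline U$, we get $\overline U=\bigcup_{j=1}^m\bigl(\overline U\cap P_{\omega_j}\bigr)$, a finite cover of $\overline U$ by sets closed in $\overline U$ (each $P_{\omega_j}$ is a polyhedron, hence closed in $\R^n$). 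On the $j$-th piece $A$ restricts to the continuous map $A_{\omega_j}|_{\overline U\cap P_{\omega_j}}$. For any closed $D\subseteq\R^n$, the preimage $(A|_{\overline U})^{-1}(D)=\bigcup_{j=1}^m\bigl(A_{\omega_j}|_{\overline U\cap P_{\omega_j}}\bigr)^{-1}(D)$ is a finite union of sets closed in $\overline U$, hence closed in $\overline U$; so $A|_{\overline U}$ is continuous, and therefore so is its further restriction $A|_U$. Since $U$ is open in $\R^n$, this shows $A$ is continuous at $x_0$, and as $x_0$ was arbitrary, $A$ is continuous on $\R^n$.

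\emph{Anticipated difficulty.} There is no real obstacle here; the only subtlety is the passage from the (possibly infinite) family $\{P_\omega\}_{\omega\in\Omega}$ to a finite subfamily, and this is precisely what the assumption ``any bounded set intersects only finitely many polyhedra'' provides. It is worth noting that the argument uses nothing about the $P_\omega$ beyond the facts that they are closed and that the family is locally finite, so the conclusion holds verbatim for any locally finite closed cover of $\R^n$; the polyhedral structure matters only for the applications that later invoke this lemma.
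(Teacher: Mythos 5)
Your proof is correct and complete. The paper does not actually prove this lemma---it cites it as Theorem 9.4 of Dugundji---and your argument (glue the values, then use local finiteness to reduce continuity near each point to the finite pasting lemma for closed sets) is precisely the standard proof of that cited result, so there is nothing to compare or correct.
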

%

The following is a deep result in algebraic topology, first proved by Brouwer~\cite{brouwer1911beweis, Dold1995}.

\begin{theorem}\label{thm:invariance-domain}[Invariance of Domain]
If $U$ is an open subset of $\R^n$ and $f : U \to \R^n$ is an injective, continuous map, then $f(U)$ is open and $f$ is a homeomorphism between $U$ and $f(U)$.
\end{theorem}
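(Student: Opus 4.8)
The plan is to reduce Invariance of Domain to a purely homological statement about complements of embedded cells and spheres in $S^n$, and then extract the topological conclusion from a connectedness count. First I would establish two auxiliary facts, each proved by induction on the dimension of the embedded object via a Mayer--Vietoris argument: (a) if $h\colon D^k \to S^n$ is a topological embedding of a closed $k$-cell ($0 \le k \le n$), then $S^n \setminus h(D^k)$ has trivial reduced homology; and (b) if $h\colon S^k \to S^n$ is a topological embedding of a $k$-sphere ($0 \le k < n$), then $\tilde H_i(S^n \setminus h(S^k)) \cong \Z$ for $i = n-k-1$ and vanishes otherwise. For (a) one splits $D^k = D^k_+ \cup D^k_-$ along an equatorial $D^{k-1}$ and applies Mayer--Vietoris to the two complements; for (b) one writes $S^k = D^k_+ \cup D^k_-$ glued along $S^{k-1}$ and feeds fact (a) into the sequence. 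This pair of computations is the technical heart of the proof.

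Granting these facts, the rest is a short local argument. Identify $\R^n$ with $S^n$ minus a point. Fix $x \in U$, choose a small closed ball $D \subseteq U$ centered at $x$, and set $\Sigma = \partial D$; then $f|_D$ and $f|_\Sigma$ are embeddings, being continuous injections of compact sets. By fact (a), $S^n \setminus f(D)$ is nonempty and connected (its $\tilde H_0$ vanishes); by fact (b) with $k = n-1$, $S^n \setminus f(\Sigma)$ has exactly two connected components. Injectivity of $f$ gives $f(D)\setminus f(\Sigma) = f(\intr(D))$, so $S^n \setminus f(\Sigma)$ is the disjoint union of the connected open set $S^n \setminus f(D)$ and the connected set $f(\intr(D))$; since an open subset of $S^n$ has open components and there are precisely two of them, both pieces are open. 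In particular $f(\intr(D))$ is an open neighborhood of $f(x)$ lying inside $f(U)$.

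Since $x$ was arbitrary, this shows $f$ is an open map: for any open $W \subseteq U$ and any $x \in W$, a small ball $D \subseteq W$ yields an open set $f(\intr(D)) \subseteq f(W)$ containing $f(x)$, so $f(W)$ is open. Taking $W = U$ shows $f(U)$ is open, and then $f\colon U \to f(U)$ is a continuous open bijection, hence a homeomorphism. The main obstacle is entirely concentrated in the inductive homology computations (a) and (b) — setting up the Mayer--Vietoris pieces correctly and checking they satisfy the hypotheses — after which the deduction of the theorem is elementary. An alternative route would replace (a)/(b) by Brouwer degree theory, showing that a continuous injection of a ball has nonzero local degree at interior points and is therefore locally surjective onto a neighborhood; but this ultimately rests on the same no-retraction/Brouwer fixed-point input, so I would favor the homological argument for its cleaner bookkeeping.
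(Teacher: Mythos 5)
The paper does not prove this statement at all: it is quoted as a classical black-box result of Brouwer, with citations to Brouwer's original paper and to Dold, precisely because a self-contained proof requires nontrivial algebraic topology. Your outline is the standard modern proof (essentially the one in Dold or Hatcher): compute $\tilde H_*$ of complements of embedded cells and spheres in $S^n$ by Mayer--Vietoris induction, then use the component count for $S^n \setminus f(\partial D)$ together with the connectedness of $S^n \setminus f(D)$ and of $f(\intr(D))$ to conclude that $f(\intr(D))$ is a whole component of an open set, hence open. The local-to-global step and the ``continuous open bijection is a homeomorphism'' conclusion are all correct, and you rightly identify that the entire difficulty sits in facts (a) and (b), which you only sketch; that is acceptable for a result of this stature, since those inductions are standard (though the limiting argument needed in the Mayer--Vietoris step, where a nontrivial cycle must survive into arbitrarily small neighborhoods of the cell, is the one place where real care is required and is glossed over here). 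One small point to tighten: the vanishing of $\tilde H_0\bigl(S^n \setminus f(D)\bigr)$ does not by itself give nonemptiness, since the empty set also has vanishing $\tilde H_0$ under the usual conventions; either note that $f(D)$ is a compact set homeomorphic to $D^n$ and hence cannot be all of $S^n$, or observe that if $S^n \setminus f(D)$ were empty then $S^n \setminus f(\Sigma) = f(\intr(D))$ would be connected, contradicting the two-component count from fact (b). With that repaired, your argument is a complete and correct route to the theorem, and arguably more informative than the paper's bare citation.
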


\paragraph{Structure of the lifting region $R(S,B)$.} Let $S$ be a polyhedrally-truncated affine lattice given as $S = (b + \Lambda) \cap C$ and let $B$ be a maximal $S$-free polyhedron given by~\eqref{eq:B-desc}. We now collect some facts about the lifting region $R(S,B)$ as defined in~\eqref{eq:lifting-desc}.

Define $L_B = \{r \in \R^n: a_i\cdot r = a_j\cdot r, \;\quad \forall i,j \in I\}$. The following is proved in~\cite{bcccz} when $\Lambda = \Z^n$; the result can be seen to hold when $\Lambda$ is a general lattice.

\begin{prop}\label{prop:basic-facts}[Theorem 1 and Proposition 6 in~\cite{bcccz}] Let $S$ be a polyhedrally-truncated affine lattice. $B$ is a full-dimensional maximal $S$-free convex set with $0 \in \intr(B)$ if and only if $B$ is a polyhedron of the form~\eqref{eq:B-desc} with a point from $S$ in the relative interior of every facet.
Further, either $B$ is a halfspace or $\intr(B \cap \conv(S)) \neq \emptyset$. When $\intr(B\cap \conv(S))\neq \emptyset$, the following are true:
\begin{itemize}
\item[(i)] $\rec(B \cap \conv(S)) = \lin(B) \cap \rec(\conv(S)) \subseteq \lin(B) \subseteq L_B$ and $\lin(B)\cap \rec(\conv(S))$ is a cone generated by vectors in $\Lambda$.
\item[(ii)] $\lin(R(s)) = \rec(R(s)) = L_B$ for every $s \in B\cap S$.
\item[(iii)] $R(S,B)$ is a union of finitely many polyhedra.
\end{itemize}
\end{prop}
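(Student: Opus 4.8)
The goal is to establish Proposition~\ref{prop:basic-facts}, which has four distinct assertions: (a) the characterization of full-dimensional maximal $S$-free sets as polyhedra of the form~\eqref{eq:B-desc} with a point of $S$ in the relative interior of every facet; (b) the dichotomy that either $B$ is a halfspace or $\intr(B\cap\conv(S))\neq\emptyset$; and then items (i)--(iii) about the recession/lineality structure of $B\cap\conv(S)$ and the spindles $R(s)$. Since these are cited as ``Theorem 1 and Proposition 6 in~\cite{bcccz}'' proven there for $\Lambda=\Z^n$, the plan is not to reprove them from scratch but to argue that each step of the original argument goes through verbatim (or with trivial modification) when $\Z^n$ is replaced by an arbitrary lattice $\Lambda$. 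The cleanest way to do this is to exhibit an explicit reduction to the $\Z^n$ case via a linear change of coordinates.

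The plan is as follows. First, fix a basis $v_1,\dots,v_n$ of $\Lambda$ and let $M:\R^n\to\R^n$ be the invertible linear map sending $e_i\mapsto v_i$, so that $M(\Z^n)=\Lambda$. Then $S=(b+\Lambda)\cap C$ has preimage $M^{-1}(S) = (M^{-1}b + \Z^n)\cap M^{-1}(C)$, which is a polyhedrally-truncated affine lattice with underlying lattice $\Z^n$ (note $\conv(M^{-1}(S)) = M^{-1}(\conv(S))$ is still a polyhedron). A set $B$ is maximal $S$-free with $0\in\intr(B)$ if and only if $M^{-1}(B)$ is maximal $M^{-1}(S)$-free with $0\in\intr(M^{-1}(B))$, because $M$ is a homeomorphism preserving convexity, interiors, inclusions, and facet structure, and it maps the points of $S$ in a facet of $B$ bijectively onto the points of $M^{-1}(S)$ in the corresponding facet of $M^{-1}(B)$. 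This immediately transfers assertion (a) and the dichotomy (b). For (i), I would use that $M$ maps $\rec(\cdot)$, $\lin(\cdot)$, $\intr(\cdot)$ and the relevant intersections equivariantly, and that $M^{-1}(B\cap\conv(S)) = M^{-1}(B)\cap\conv(M^{-1}(S))$; the statement that $\lin(B)\cap\rec(\conv(S))$ is generated by vectors of $\Lambda$ follows because in the $\Z^n$-picture it is generated by vectors of $\Z^n$ (this uses Fact~\ref{fact:lin-space-lattice}, which holds for general truncated affine lattices), and $M$ carries $\Z^n$-vectors to $\Lambda$-vectors. For (ii), the spindle $R(s,B)$ is defined purely in terms of the inequality description of $B$ and does not reference the lattice at all, so $\lin(R(s))=\rec(R(s))=L_B$ is exactly the $\Z^n$ statement applied coordinate-free; alternatively observe $L_{M^{-1}(B)} = M^{-1}(L_B)$ and $R(M^{-1}s, M^{-1}B) = M^{-1}(R(s,B))$. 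For (iii), finiteness of the union $\bigcup_{s\in B\cap S}R(s,B)$ reduces to finiteness of $\{R(s,B): s\in B\cap S\}$ modulo the lineality $L_B$, and since $R(s,B)$ depends on $s$ only through which facets $s$ lies on, there are at most $2^{|I|}$ distinct spindles up to translation by $L_B$ — and one shows using Proposition~\ref{prop:W_S-characterization} and the structure of $W_S$ that the relevant translations land in $L_B$; again this is identical to the $\Z^n$ argument.

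The one place requiring genuine care, and the main obstacle, is making sure that \emph{every} lattice-specific sub-lemma invoked inside the proof of Theorem 1 and Proposition 6 of~\cite{bcccz} is itself invariant under the change of variables $M$, rather than only the final statements. In particular, the argument that $R(S,B)$ has finitely many pieces in~\cite{bcccz} uses that $B\cap S$ decomposes into finitely many classes modulo $W_S$ together with periodicity of the spindles along $W_S\cap L_B$; I need to check that $W_{M^{-1}(S)} = M^{-1}(W_S)$ (which is exactly Proposition~\ref{prop:W-manipulation}(i)) and that the periodicity statement is coordinate-free. Once one verifies this compatibility for each ingredient — which is routine but must be done — the proposition follows. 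I would therefore structure the proof as: (1) set up $M$ and record that it conjugates all the relevant operators ($\conv$, $\intr$, $\rec$, $\lin$, $W_{(\cdot)}$, the spindle construction, $L_{(\cdot)}$); (2) invoke the $\Lambda=\Z^n$ results of~\cite{bcccz} on $M^{-1}(S)$ and $M^{-1}(B)$; (3) push everything back through $M$. The only subtlety worth flagging explicitly in the write-up is the lattice-generation claim in (i), which is where Fact~\ref{fact:lin-space-lattice} does the real work.
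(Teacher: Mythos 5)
The paper gives no proof of this proposition at all: it simply cites Theorem~1 and Proposition~6 of~\cite{bcccz} for $\Lambda=\Z^n$ and remarks that ``the result can be seen to hold when $\Lambda$ is a general lattice.'' Your reduction via the linear change of coordinates $M$ with $M(\Z^n)=\Lambda$, together with the verification that $M$ conjugates all the relevant constructions ($\conv$, $\intr$, $\rec$, $\lin$, $L_{(\cdot)}$, $W_{(\cdot)}$, and the spindles, whose normalization $a_i\cdot r\le 1$ is preserved because $M$ fixes the origin), is exactly the standard justification for that remark and is correct.
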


%
\begin{prop}\label{prop:L-lin}
Suppose $\intr(B \cap \conv(S) \neq \emptyset$. $L_B\cap \lin(\conv(S)) = \lin(B)\cap\lin(\conv(S))$ and $L_B \cap \lin(\conv(S))$ is a lattice subspace of $\Lambda$. Consequently, if $B\cap\conv(S)$ is a polytope, then $L_B\cap \lin(\conv(S)) = \{0\}$.
\end{prop}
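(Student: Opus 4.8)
The plan is to prove the two claimed identities and then deduce the consequence about polytopes.

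First I would establish the inclusion $\lin(B)\cap\lin(\conv(S)) \subseteq L_B\cap\lin(\conv(S))$, which is immediate from Proposition~\ref{prop:basic-facts}(i), since $\lin(B)\subseteq L_B$. For the reverse inclusion, take $r\in L_B\cap\lin(\conv(S))$; I need to show $r\in\lin(B)$. The idea is to use that $\intr(B\cap\conv(S))\neq\emptyset$: pick a point $x$ in this interior. Since $r\in L_B$, the value $a_i\cdot r$ is a common value $c$ for all $i\in I$. If $c=0$ then $r\in\rec(B)$ and $-r\in\rec(B)$ (as $-r$ also lies in $L_B$ with common value $0$), so $r\in\lin(B)$; combined with $r\in\lin(\conv(S))$ we would be done. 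So the crux is to rule out $c\neq 0$. Suppose $c>0$ (the case $c<0$ is symmetric via $-r$). Since $r\in\lin(\conv(S))$, for the interior point $x\in B\cap\conv(S)$ and all real $\lambda$ we have $x+\lambda r\in\conv(S)$; but $a_i\cdot(x+\lambda r) = a_i\cdot x + \lambda c$, which for $\lambda\to+\infty$ eventually violates $a_i\cdot(\cdot)\le 1$, so $x+\lambda r$ leaves $B$. That alone is not a contradiction, so I would instead argue with volumes/recession directions more carefully: the ray $\{x+\lambda r:\lambda\ge 0\}$ stays in $\conv(S)$ but exits $B$, while the ray $\{x-\lambda r:\lambda\ge 0\}$ stays in $\conv(S)$ and stays in $B$ (since $a_i\cdot(x-\lambda r) = a_i\cdot x - \lambda c \le a_i\cdot x < 1$ for $\lambda\ge 0$), hence $-r\in\rec(B\cap\conv(S))$. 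By Proposition~\ref{prop:basic-facts}(i), $\rec(B\cap\conv(S)) = \lin(B)\cap\rec(\conv(S))$, so $-r\in\lin(B)$, and then $r\in\lin(B)$ as well — contradicting $a_i\cdot r = c > 0$ (a vector in $\lin(B)$ must have $a_i\cdot r = 0$). This contradiction forces $c=0$, completing the reverse inclusion.

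Next I would show $L_B\cap\lin(\conv(S))$ is a lattice subspace of $\Lambda$. Having identified it with $\lin(B)\cap\rec(\conv(S))$ restricted appropriately — more precisely, from the argument above every $r\in L_B\cap\lin(\conv(S))$ satisfies $r\in\lin(B)\cap\rec(\conv(S))$ and $-r\in\lin(B)\cap\rec(\conv(S))$, so $L_B\cap\lin(\conv(S)) = \lin(B)\cap\lin(\conv(S)) \subseteq \lin(B)\cap\rec(\conv(S))$ — I can invoke Proposition~\ref{prop:basic-facts}(i), which states that $\lin(B)\cap\rec(\conv(S))$ is a cone generated by vectors in $\Lambda$. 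Since $L_B\cap\lin(\conv(S))$ is a linear subspace contained in this cone and also equals $\lin(B)\cap\lin(\conv(S))$ (a subspace), it is spanned by lattice vectors: concretely, it is the linear span of $(\lin(B)\cap\rec(\conv(S)))\cap(L_B\cap\lin(\conv(S)))$, and a subspace that is a face-free sub-cone of a lattice-generated cone is itself lattice-generated. Alternatively, and more cleanly, I would note $L_B\cap\lin(\conv(S)) = \lin(B)\cap\lin(\conv(S))$, and $\lin(\conv(S)) = \vectorsp(W_S)$ with $W_S$ a lattice by Proposition~\ref{prop:W_S-characterization} and Fact~\ref{fact:lin-space-lattice}; intersecting the rational/lattice subspace $\lin(\conv(S))$ with $\lin(B)$ and using that $\lin(B)\cap\rec(\conv(S))$ is lattice-generated pins down a basis of lattice vectors.

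Finally, the consequence: if $B\cap\conv(S)$ is a polytope, then $\rec(B\cap\conv(S)) = \{0\}$, so by Proposition~\ref{prop:basic-facts}(i), $\lin(B)\cap\rec(\conv(S)) = \{0\}$; since $L_B\cap\lin(\conv(S)) = \lin(B)\cap\lin(\conv(S)) \subseteq \lin(B)\cap\rec(\conv(S)) = \{0\}$, we get $L_B\cap\lin(\conv(S)) = \{0\}$. The main obstacle I anticipate is the reverse inclusion in the first identity — specifically, cleanly ruling out the case of a nonzero common value $c$ — which is exactly where one must use the hypothesis $\intr(B\cap\conv(S))\neq\emptyset$ together with the recession-cone structure in Proposition~\ref{prop:basic-facts}(i); the lattice-subspace claim is then essentially bookkeeping on top of facts already in the excerpt.
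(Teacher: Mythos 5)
Your proof is correct and follows essentially the same route as the paper: both arguments reduce to the identity $\rec(B\cap\conv(S)) = \lin(B)\cap\rec(\conv(S))$ from Proposition~\ref{prop:basic-facts}(i) to force the common value $a_i\cdot r$ to vanish (the paper phrases it as ``$r$ or $-r$ lies in $\rec(B)$, hence in $\lin(B)$'' rather than as a contradiction, but the mechanism is identical), and both obtain the lattice-subspace claim from $\lin(B)\cap\lin(\conv(S)) = (\lin(B)\cap\rec(\conv(S)))\cap\lin(\conv(S))$ together with Fact~\ref{fact:lin-space-lattice}. One small caution: your first justification of the lattice-subspace step (``a subspace that is a sub-cone of a lattice-generated cone is itself lattice-generated'') is not literally true for arbitrary subspaces contained in such a cone, but your alternative formulation --- intersecting the lattice-generated cone with the lattice subspace $\lin(\conv(S))$ --- is the correct one and is exactly what the paper does.
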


\begin{proof}

Consider $r\in L_B\cap \lin(\conv(S))$. It suffices to show that either $r$ or $-r$ is in $\lin(B)\cap\lin(\conv(S))$. Since, $r \in L_B$, for all $i\in I$, $a_i\cdot r$ have the same sign. If $a_i\cdot r \leq 0$ for all $i\in I$, then $r \in \rec(B)$ and therefore, $r \in \rec(B)\cap \lin(\conv(S)) \subseteq \rec(B) \cap \rec(\conv(S)) = \lin(B) \cap \rec(\conv(S))$ (the equality follows from Proposition~\ref{prop:basic-facts}(i) -- note that since $B$ and $\conv(S)$ are both polyhedra, $\rec(B \cap \conv(S)) = \rec(B) \cap \rec(\conv(S))$). Therefore $r \in \lin(B)$. Since $r\in \lin(\conv(S))$, we thus have $r \in \lin(B)\cap \lin(\conv(S))$. If $a_i\cdot r \geq 0$ for all $i \in I$, then $a_i\cdot (-r) \leq 0$ and so $-r \in \rec(B)$. Repeating the same argument, we obtain $-r \in \lin(B)$. Thus, $-r \in \lin(B)\cap\lin(\conv(S))$.

The assertion that $L_B\cap \lin(\conv(S))$ is a lattice subspace follows from Proposition~\ref{prop:basic-facts} (i), the fact that $\lin(\conv(S))$ is a lattice subspace (Fact~\ref{fact:lin-space-lattice}) and $\lin(B) \cap\lin(\conv(S)) = (\lin(B)\cap \rec(\conv(S)))\cap \lin(\conv(S)).$ \end{proof}

%
%
%

\begin{theorem}\label{thm:finite-intersection}
Suppose $\intr(B\cap \conv(S))\neq \emptyset$. A bounded set intersects only finitely many polyhedra from $R(S,B) + W_S$.
\end{theorem}
\begin{proof}

Let $L = L_B \cap\lin(\conv(S))$; $L$ is a lattice subspace by Proposition~\ref{prop:L-lin}. Let $V$ be a lattice subspace such that $V\cap L = \{0\}$ and $(V\cap \Lambda) + (L \cap \Lambda) = \Lambda$ (and so $V+L = \R^n$). Also define $L_1 := V \cap L_B$ and $L_2 := V \cap \lin(\conv(S))$. 

Note that $L_2\cap L_B = \{0\}$. Indeed,
\begin{equation*}
L_2\cap L_B = (V\cap \lin(\conv(S))\cap L_B = V \cap (L_B\cap \lin(\conv(S))) = V \cap L = \{0\}.
\end{equation*}
Furthermore, $L_2+L = \lin(\conv(S))$. In order to see this, observe that since $V+L = \R^n$, for every $x\in \lin(\conv(S))$ there exists $v\in V$ and $l\in L$ such that $x=v+l$. Since $v=x-l \in \lin(\conv(S))$, $x\in L_2+L$. Thus $\lin(\conv(S)) \subseteq L_2+L$. The other containment follows from the definitions of $L$ and $L_2$.

We next show that $\lin(\conv(S)) \cap \Lambda = (L_2 \cap \Lambda) + (L\cap \Lambda)$. Consider some $x\in \lin(\conv(S))\cap \Lambda$. Since $x\in \Lambda = (V\cap \Lambda)+(L+\Lambda)$ and $V\cap L = \{0\}$, there exists a unique $v\in V\cap\Lambda$ and $l\in L\cap \Lambda$ such that $x=v+l$. As $x\in \lin(\conv(S)) = L_2+L$ and $L_2\cap L \subseteq L_2 \cap L_B = \{0\}$, there exists a unique $l_2\in L_2$ and $l'\in L$ such that $x=l_2+l'$. By the uniqueness of $v$ and $l$, it follows that $v=l_2$ and $l=l'$. Thus $v\in L_2\cap \Lambda$ and $l\in L\cap \Lambda$. Hence, $\lin(\conv(S)) \cap \Lambda \subseteq (L_2 \cap \Lambda) + (L\cap \Lambda)$. The definitions of $L_2$ and $L$ imply the $\supseteq$ containment.

Let $L' $ be any linear subspace of $\R^n$ containing $L_2$ such that $L' \cap L_B = \{0\}$ and $L' + L_B = \R^n$; such a linear space exists since $L_2 \cap L_B = \{0\}$. Since $L_B$ is the recession cone of each spindle in $R(S,B)$, $R(S,B) = (R(S,B)\cap L')+L_B$ and $R(S,B) \cap L'$ is a finite union of polytopes because $R(S,B)$ is a union of finitely many polyhedra. Moreover, by Proposition~\ref{prop:W_S-characterization},
\begin{align*}
R(S,B) + W_S &= R(S,B)+(\lin(\conv(S))\cap\Lambda) \\&=
((R(S,B)\cap L')+L_B) +((L_2\cap \Lambda)+(L\cap \Lambda))\\
& = (R(S,B)\cap L')+(L_2\cap \Lambda)+(L_B+(L\cap \Lambda))\\
& = (R(S,B)\cap L')+(L_2\cap \Lambda)+L_B,
\end{align*}
where the last equality comes from $L\subseteq L_B$.

 Observe that each bounded set $D$ in $\R^n$ intersects at most as many polyhedra in $R(S,B) + W_S$ as $D + L_B$. Since $L'\cap L_B = \{0\}$, $D + L_B$ intersects the same number of polyhedra in $R(S,B) + W_S$ as $(D+L_B) \cap L'$ intersects polyhedra in $(R(S,B) \cap L') + (L_2 \cap \Lambda)$. The complementary assumption also implies that $(D+L_B) \cap L'$ is a bounded set. Since $R(S,B) \cap L'$ is a finite union of polytopes and $L_2 \cap \Lambda$ is a lattice in $L_2 \subseteq L'$, the bounded set $(D+L_B) \cap L'$ intersects finitely many polytopes in $(R(S,B) \cap L') + (L_2 \cap \Lambda)$.\end{proof}

\begin{lemma}\label{lemma:closed}
Suppose $\intr(B\cap \conv(S))\neq \emptyset$. $R(S,B) + W_S$ is a closed set.
\end{lemma}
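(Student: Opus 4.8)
The plan is to exhibit $R(S,B) + W_S$ as a \emph{locally finite} union of closed polyhedra and then invoke the standard fact that a locally finite union of closed sets is closed; the entire content is carried by Theorem~\ref{thm:finite-intersection}.

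\emph{Step 1: a polyhedral decomposition.} By Proposition~\ref{prop:basic-facts}(iii), $R(S,B) = P_1 \cup \dots \cup P_m$ for finitely many polyhedra $P_1,\dots,P_m$. Since $W_S$ is a subgroup of $\R^n$, this gives
\[
R(S,B) + W_S = \bigcup_{j=1}^m \bigcup_{w \in W_S} (P_j + w),
\]
a union of closed polyhedra indexed by $\{1,\dots,m\}\times W_S$ (several of these translates may coincide, which is harmless for a union). Theorem~\ref{thm:finite-intersection} says precisely that this family is locally finite: every bounded subset of $\R^n$, in particular every closed ball, meets only finitely many of the sets $P_j + w$. (If one prefers to match the decomposition used in the proof of Theorem~\ref{thm:finite-intersection} verbatim, one can instead work with the pieces of $R(S,B)+W_S = (R(S,B)\cap L') + (L_2\cap\Lambda) + L_B$ produced there; these are again closed polyhedra forming a locally finite family, and the argument below is identical.)

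\emph{Step 2: the topological argument.} Let $x \in \R^n \setminus (R(S,B) + W_S)$ and let $D$ be the closed unit ball centered at $x$. By local finiteness there are only finitely many members $Q_1,\dots,Q_N$ of the family $\{P_j + w\}$ that meet $D$. Each $Q_i$ is closed and $x \notin Q_i$, so there is $\eps_i > 0$ such that the closed ball of radius $\eps_i$ around $x$ is disjoint from $Q_i$. Set $\eps = \min\{1,\eps_1,\dots,\eps_N\}$. Then the closed ball of radius $\eps$ around $x$ is contained in $D$ and is disjoint from each $Q_i$, hence disjoint from every member of the family, and therefore $\eps$-ball around $x$ misses $R(S,B)+W_S$ entirely. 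Since $x$ was an arbitrary point of the complement, $\R^n \setminus (R(S,B)+W_S)$ is open, i.e., $R(S,B)+W_S$ is closed.

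I do not anticipate a genuine obstacle: the potential failure of closedness would come from infinitely many lattice translates of an unbounded polyhedron accumulating in a bounded region, and this is exactly what Theorem~\ref{thm:finite-intersection} rules out. The only mild care needed is to make sure the family of polyhedra to which one applies local finiteness is the same one used to express $R(S,B)+W_S$ as a union, which is addressed by the parenthetical remark in Step~1.
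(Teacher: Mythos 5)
Your proof is correct and follows essentially the same route as the paper: both arguments take a point $x$ in the complement, use Theorem~\ref{thm:finite-intersection} to conclude that a unit ball around $x$ meets only finitely many of the closed polyhedra whose union is $R(S,B)+W_S$, and then shrink the ball to avoid that finite (hence closed) union. The only difference is that you spell out the polyhedral decomposition explicitly via Proposition~\ref{prop:basic-facts}(iii), which the paper leaves implicit.
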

\begin{proof}
Let $x \not\in R(S,B) + W_S$. Consider the closed ball $B(x,1)$ of radius one around $x$. By Theorem~\ref{thm:finite-intersection}, $B(x,1)$ intersects only finitely many polyhedra from $R(S,B) + W_S$. The union of these finitely many polyhedra is a closed set and therefore, there exists an open ball $N$ around $x$ that does not intersect any of these polyhedra. But since $N \subseteq B(x,1)$, $N$ does not intersect any other polyhedron from $R(S,B) + W_S$. Hence, the complement of $R(S,B) + W_S$ is open.\end{proof}

\section{The covering property is preserved under affine transformations}
Let $S$ be a polyhedrally-truncated affine lattice and let $B$ be a maximal $S$-free polyhedron given by~\eqref{eq:B-desc}. We want to understand the covering properties of the lifting region when we transform $S$ and $B$ by the same invertible affine transformation. For any linear map $F:\R^n \to \R^n$, $F^*$ will denote its adjoint, i.e., the unique linear map such that $x\cdot F(y) = F^*(x)\cdot y$ for all $x,y \in \R^n$; the adjoint corresponds to taking the transpose of the matrix form of the linear map $F$. To avoid an overuse of parentheses, we will often abbreviate $F(x)$ to $Fx$ wherever this is possible without causing confusion.

\begin{theorem}\label{thm:trans-inv}[Affine Transformation Invariance Theorem] Let $M:\R^n\to \R^n$ be an invertible linear map and $m \in \R^n$. Let $T$ denote the affine transformation $T(\cdot) := M(\cdot) + m$. Suppose that $T(B)$ also contains the origin in its interior (i.e., $a_i\cdot (-M^{-1}m) < 1$ for each $i\in I$). $R(S,B) + W_S = \R^n$ if and only if $R(S',B') + W_{S'} = \R^n$, where $S' = T(S)$ and $B' = T(B)$.
\end{theorem}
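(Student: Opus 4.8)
The plan is to reduce everything to a statement about the explicit combinatorial description of $R(S,B)$ given by the spindles, and then exploit the fact that the spindle construction is equivariant under the change of variables induced by $T$. First I would dispose of the degenerate case: if $B$ is a halfspace then, since $T$ preserves halfspaces and $T(B)$ still contains the origin in its interior, both sides of the iff are handled by a direct check (a halfspace $S$-free set has $R(S,B)$ a halfspace, and the covering condition becomes a condition on $W_S$ alone, which transforms correctly by Proposition~\ref{prop:W-manipulation}(i)). So assume $\intr(B\cap\conv(S))\neq\emptyset$, and recall $S' = MS + m$, $B' = MB + m$, $W_{S'} = M W_S$ by Proposition~\ref{prop:W-manipulation}(i).

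The key algebraic observation is how the facet normals transform. If $B = \{r : a_i\cdot r \le 1,\ i\in I\}$ and $0\in\intr(T(B))$, then writing $r' = Mr + m$ one computes $a_i\cdot r \le 1 \iff a_i'\cdot r' \le 1$ where $a_i' = (M^{-*}a_i)/(1 - a_i\cdot M^{-1}m)$; the denominator is positive precisely by the hypothesis $a_i\cdot(-M^{-1}m) < 1$. Thus $B'$ is presented in the form~\eqref{eq:B-desc} with normals $a_i'$. Now for $s\in B\cap S$ with active index $k$ (so $a_k\cdot s = 1$), the translated point $s' = Ms+m$ lies in $B'\cap S'$ with the same active index $k$ (since $a_k'\cdot s' = 1$). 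I would then check, by plugging in, that the linear map $r\mapsto Mr$ (note: the linear part, not the affine part, once one accounts for the $s-r$ term correctly — in fact one should track that $R(s,B)$ is defined by the \emph{differences} $a_i - a_k$, and $(a_i' - a_k')$ is \emph{not} simply a scalar multiple of $(a_i-a_k)$, so some care is needed here) carries the spindle $R(s,B)$ onto $R(s',B')$. The cleanest route may be to avoid computing $a_i'-a_k'$ and instead verify directly that $r\in R(s,B)$, i.e. $\max_i a_i\cdot r = \max_i a_i\cdot(s-r) \le$ appropriate bounds, is equivalent after substitution to the defining inequalities of $R(s',B')$ evaluated at the image of $r$; the formula~\eqref{eq:psi-formula} and the alternative description of the spindle as $\{r : \psi(r) + \psi(s - r) = \psi(s)\} = \{r: \psi(r)+\psi(s-r)\le 1\}$ with $\psi$ the gauge-like function should make this transparent, since a gauge transforms cleanly under affine maps that fix the relevant normalization. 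This gives $R(s',B') = A(R(s,B)) + (\text{translation})$ for an explicit invertible affine map $A$ built from $M$, and taking the union over $s\in B\cap S$ (which biject with $s'\in B'\cap S'$) yields $R(S',B') = A(R(S,B))$ up to translation, hence $R(S',B') + W_{S'} = A(R(S,B) + W_S)$ possibly up to translation. Since $A$ is an invertible affine bijection of $\R^n$, $R(S,B)+W_S = \R^n \iff R(S',B')+W_{S'} = \R^n$, and we are done.

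The main obstacle I anticipate is exactly the bookkeeping in the spindle transformation: the facet normals rescale by the point-dependent factors $1/(1-a_i\cdot M^{-1}m)$, so the map taking one spindle to another is \emph{not} the naive linear map $M$, and the "pair of inequalities" structure $(a_i-a_k)\cdot r \le 0$ and $(a_i-a_k)\cdot(s-r)\le 0$ does not transform term-by-term. I expect the resolution is that the correct bijection between spindles is still an affine map (because the spindle is intrinsically $\{r: \psi(r) + \psi(s-r) \le \psi(s)\}$ and $\psi$ corresponds to the maximal $S$-free set, a notion that is affinely natural once the origin normalization is respected), but making this rigorous may require re-deriving the spindle's coordinate-free characterization from~\cite{bcccz} rather than manipulating the inequality system directly. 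A secondary, minor point is handling unboundedness: one must confirm that recession cones transform correctly (they do, under the linear part $M$, by Proposition~\ref{prop:basic-facts}(ii) applied on both sides, since $L_{B'} = M L_B$), so that the "finite union of polyhedra" structure and Theorem~\ref{thm:finite-intersection} are available symmetrically on both sides — though for the bare iff statement in Theorem~\ref{thm:trans-inv} we only need the set equality $R(S',B')+W_{S'} = A(R(S,B)+W_S)$, for which the recession-cone discussion is not strictly necessary.
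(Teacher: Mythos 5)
The central step of your argument fails: there is no single invertible affine map $A$ with $R(S',B') = A(R(S,B))$, even up to translation. The map that carries the spindle $R(s,B)$ onto $R(s',B')$ is $T_k(r) = Mr + (a_k\cdot r)\,m$, where $k$ indexes the facet of $B$ containing $s$, and it genuinely depends on $k$. A one-dimensional example already refutes the global-affine claim: take $S = 2\Z+1$, $B=[-1,1]$, $M = \mathrm{id}$, $m = 1/2$. Then $R(S,B)$ is the union of the spindles $[0,1]$ and $[-1,0]$, while $R(S',B')$ is the union of the corresponding spindles $[0,3/2]$ and $[-1/2,0]$: the first spindle is stretched by the factor $3/2$ and the second shrunk by $1/2$, and no affine map of $\R$ does both. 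Your own observation that the normals rescale by the $i$-dependent factors $1/(1+a_i\cdot M^{-1}m)$ (note the sign: your denominator $1-a_i\cdot M^{-1}m$ is incorrect) is precisely the reason; the gauge $\psi$ is normalized at the origin, and the origin moves relative to $B$ under $T$, so $\psi$ does \emph{not} transform by a single affine change of variables, and neither does the spindle description $\{r:\psi(r)+\psi(s-r)\le 1\}$. Consequently the step ``taking the union over $s\in B\cap S$ yields $R(S',B')=A(R(S,B))$'' and the concluding sentence ``Since $A$ is an invertible affine bijection of $\R^n$\dots'' have no basis. This is exactly where the real difficulty of the theorem lies.

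What is actually true is that the facet- and translate-dependent maps $x\mapsto T_k(x-w)+Mw$ on the pieces $R(s,B)+w$, $w\in W_S$, agree on overlaps --- and already this is nontrivial: it requires the Collision Lemma (Lemma~\ref{lemma:collision}), which shows that points congruent modulo $W_S$ lying in spindles over different facets have equal ``heights'' $a_{i_1}\cdot x_1=a_{i_2}\cdot x_2$. These pieces then patch (Lemma~\ref{lemma:patching-2}, using the local finiteness from Theorem~\ref{thm:finite-intersection}) into a continuous, injective, but only \emph{piecewise}-affine map $A$ with $A(R(S,B)+W_S)=R(S',B')+W_{S'}$. A piecewise-affine injection of $\R^n$ need not be surjective, so one must still prove $A(\R^n)=\R^n$; the paper does this topologically (Invariance of Domain, Theorem~\ref{thm:invariance-domain}, gives openness of the image, Lemma~\ref{lemma:closed} gives closedness, and connectedness of $\R^n$ finishes). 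None of this machinery appears in, or can be bypassed by, your outline, so the proposal does not prove the theorem.
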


Observe that $B' = T(B) = M(B) + m$ is given by $\{r \in \R^n : a'_i\cdot r \leq 1\;\; i\in I\}$, where $$a'_i = \frac{(M^{-1})^*(a_i)}{1+ a_i\cdot M^{-1}(m)} \quad \textrm{for each }\; i\in I.$$ 
Clearly, $B'$ is a maximal $S'$-free set. For $s' \in B' \cap S'$, the spindle $R(s',B')$ is therefore given by  $$R(s',B') = \{r : (a'_i - a'_k)\cdot r \leq 0,\;\; (a'_i - a'_k)\cdot (s' - r) \leq 0 \;\quad, \forall i \in I\}.$$ The lifting region becomes $R(S',B') = \bigcup_{s' \in B'\cap S'} R(s',B')$. 


\paragraph{Intersections modulo the lattice.} We show an interesting property of different spindles when they intersect after translations by vectors in $W_S$. In particular, two spindles from {\em different} facets cannot intersect in their interiors, and moreover, the ``height" of the common intersection points from the different spindles is the same with respect to the respective facets.

\begin{lemma}\label{lemma:collision}[Collision Lemma] Let $S$ be a polyhedrally-truncated affine lattice and let $B$ be a maximal $S$-free polyhedron given by~\eqref{eq:B-desc}. Let $s_1,s_2\in B\cap S$, and let $i_1, i_2\in I$ be such that $a_{i_1}\cdot s_1=1$ and $a_{i_2}\cdot s_2 = 1$. If $x_1, x_2\in R(S,B)$ are such that $x_1-x_2\in W_S$, $x_1\in R(s_1)$, and $x_2\in R(s_2)$, then $a_{i_1}\cdot x_1 = a_{i_2}\cdot x_2$. Moreover, if $x_1\in \intr(R(s_1))$ and $x_2\in \intr(R(s_2))$, then $a_{i_1}=a_{i_2}$.
\end{lemma}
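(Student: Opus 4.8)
The key observation is that membership in a spindle $R(s, B)$ can be characterized by a pair of inequalities involving the "height function" $a_i \cdot r$. Let me set up notation: for $x_1 \in R(s_1)$ with $a_{i_1} \cdot s_1 = 1$, the spindle constraints $(a_i - a_{i_1})\cdot x_1 \le 0$ and $(a_i - a_{i_1})\cdot(s_1 - x_1) \le 0$ say precisely that for all $i \in I$,
$$a_i \cdot x_1 \le a_{i_1}\cdot x_1 \quad\text{and}\quad a_i\cdot(s_1 - x_1) \le a_{i_1}\cdot(s_1 - x_1) = 1 - a_{i_1}\cdot x_1.$$
In other words, $a_{i_1}\cdot x_1 = \max_{i\in I} a_i\cdot x_1$ (so $a_{i_1}\cdot x_1 = \psi(x_1)$ in the language of~\eqref{eq:psi-formula}), and symmetrically $a_{i_1}\cdot(s_1 - x_1) = \psi(s_1 - x_1)$, and these sum to $1$. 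Thus $x_1 \in R(s_1)$ forces $\psi(x_1) + \psi(s_1 - x_1) = 1$. The same holds for $x_2, s_2, i_2$.

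First I would set $w := x_1 - x_2 \in W_S$. Since $\psi$ is the minimal cut-generating function for $S$ and $W_S$ is the translation set, I want to exploit that $\psi$ and its minimal liftings are periodic along $W_S$ in a way that bounds $\psi(x_1)$ against $\psi(x_2)$ and $\psi(s_2 - x_1)$ against $\psi(s_2 - x_2)$. The cleanest route: use that $\psi^\ast(r) = \inf_{w\in W_S}\psi(r+w) \le \psi(r)$ is a lifting (hence $\ge$ any minimal lifting is not quite what we want — rather $\psi^\ast \le \psi$), combined with the subadditivity-type relation coming from validity of the cut. Concretely, since $s_2 \in S$ and $w \in W_S$, we have $s_2$ and $s_2 - w \in S$ as well... actually more useful: $s_1 - x_1 = s_1 - x_2 - w$. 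I would compare the two "partitions of unity": $\psi(x_1) + \psi(s_1 - x_1) = 1 = \psi(x_2) + \psi(s_2 - x_2)$, and then use periodicity of the lifting $\psi^\ast$ along $W_S$ together with $\psi^\ast \le \psi$ and the fact that on $R_\psi$ one has $\psi^\ast = \psi$. Since $x_1, x_2 \in R(S,B) = R_\psi$, we get $\psi(x_1) = \psi^\ast(x_1) = \psi^\ast(x_2 + w) = \psi^\ast(x_2) = \psi(x_2)$, i.e. $a_{i_1}\cdot x_1 = a_{i_2}\cdot x_2$, which is the first claim.

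For the second (interior) claim, the plan is to push the equality $\psi(x_1) = \psi(x_2)$ into a statement that the maximizing index is forced. When $x_1 \in \intr(R(s_1))$, I expect the spindle inequalities $(a_i - a_{i_1})\cdot x_1 < 0$ to hold strictly for every $i \ne i_1$ that is not "parallel" to $i_1$ (i.e. $a_i \ne a_{i_1}$), so that $i_1$ is the unique maximizer of $a_i \cdot x_1$, and similarly $i_2$ is the unique maximizer of $a_i\cdot x_2$. Then from $a_{i_1}\cdot x_1 = a_{i_2}\cdot x_2 = \psi(x_1) = \psi(x_2) = \psi(x_2 + w)$ I want to conclude that the active face of $B$ at $x_1$ and at $x_2$ (hence at $x_1 = x_2 + w$ after using periodicity of the whole structure along $W_S$) must be the same facet; since $W_S$ translates $B$-data compatibly (a lattice translation sends the configuration to itself in the relevant sense used in~\cite{bcccz}), uniqueness of the maximizer forces $a_{i_1} = a_{i_2}$.

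The main obstacle I anticipate is making the "periodicity along $W_S$" argument fully rigorous at the level of the explicit spindle description, rather than the abstract lifting-region description — i.e., justifying cleanly that for $x \in R(S,B)$ and $w \in W_S$ one genuinely has the height $\max_i a_i\cdot(x) = \max_i a_i\cdot(x+w)$ controlled, and that strict interiority of the spindle transfers to uniqueness of the maximizing facet index. This requires carefully combining Proposition~\ref{prop:basic-facts} (recession cone $L_B$ of each spindle, and $\lin(B) \subseteq L_B$) with the fact that $W_S \subseteq \lin(\conv(S))$ and with the identification of $R(S,B)$ with $R_\psi$; I would isolate this as the technical heart of the argument and handle the two claims as consequences.
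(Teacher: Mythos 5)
Your route for the first claim (via the identification $R(S,B)=R_\psi$, periodicity of minimal liftings along $W_S$, and $\psi(x)=\max_i a_i\cdot x$ being achieved at the spindle's defining index) is different from the paper's and could be made rigorous, but as written it has a flaw: you invoke ``$\psi^\ast=\psi$ on $R_\psi$,'' which in this paper is only established (Proposition~\ref{prop:psi-ast}) under the hypothesis $R_\psi+W_S=\R^n$ --- precisely the covering property that the Collision Lemma must \emph{not} assume, since it is a tool used to prove covering statements. The fix is to use an arbitrary minimal lifting $\pi$ (which exists by Proposition~\ref{prop:minimality-lifting}) in place of $\psi^\ast$: then $a_{i_1}\cdot x_1=\psi(x_1)=\pi(x_1)=\pi(x_2)=\psi(x_2)=a_{i_2}\cdot x_2$. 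Even so, this leans on the external result $R(S,B)=R_\psi$ from the earlier literature (quoted in the paper only for $\Lambda=\Z^n$), whereas the paper's proof is entirely self-contained: it assumes $a_{i_1}\cdot x_1<a_{i_2}\cdot x_2$, forms the point $s_2+(x_1-x_2)$, which lies in $S$ because $x_1-x_2\in W_S$, and verifies $a_i\cdot(s_2+(x_1-x_2))<1$ for every $i\in I$ directly from the spindle inequalities, contradicting $S$-freeness of $B$.

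For the second claim there is a genuine gap. Your argument rests on the assertion that ``$W_S$ translates the $B$-data compatibly / a lattice translation sends the configuration to itself,'' but $B$ is \emph{not} invariant under translation by $W_S$ (only $S$ is), so knowing that $i_1$ is the unique maximizer of $a_i\cdot x_1$ and $i_2$ the unique maximizer of $a_i\cdot x_2$, together with $a_{i_1}\cdot x_1=a_{i_2}\cdot x_2$, gives you no mechanism to conclude $a_{i_1}=a_{i_2}$; you correctly flag this as the technical heart but do not supply the missing step. The paper closes it with the same $S$-freeness construction as before: if $a_{i_1}\neq a_{i_2}$, then interiority of $x_2$ in $R(s_2)$ gives the strict inequalities $a_{i_1}\cdot x_2<a_{i_2}\cdot x_2$ and $a_{i_1}\cdot(s_2-x_2)<a_{i_2}\cdot(s_2-x_2)$, and combining these with $a_{i_1}\cdot x_1=a_{i_2}\cdot x_2$ again places $s_2+(x_1-x_2)\in\intr(B)$, a contradiction. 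You should adopt that construction; it handles both claims uniformly and avoids the lifting machinery altogether.
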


\begin{proof} If $|I|=1$, then the result is trivial. So suppose $|I|>2$. Assume to the contrary that $a_{i_1}\cdot x_1\neq a_{i_2}\cdot x_2$. Suppose that $a_{i_1}x_1 < a_{i_2}x_2$ (for the proof of the other case, switch the indices in the following argument). Since $x_1-x_2\in W_S$, the point $s_2+(x_1-x_2)$ is contained in $S$. In order to reach a contradiction, it is sufficient to show that $s_2+(x_1-x_2)\in \intr(B)$. We will show this using the definition $B = \{r \in \R^n : a_i\cdot r \leq 1\;\; i\in I\}$.

Take $i\in I$. When $i=i_1$, it follows that
\begin{align*}
a_{i_1}(s_2+(x_1-x_2))& = a_{i_1}(s_2-x_2) +a_{i_1}x_1\\
& \leq a_{i_2}(s_2-x_2)+a_{i_1}x_1 &&\text{Since }x_2\in R(s_2)\\
& = 1-a_{i_2}x_2+a_{i_1}x_1 \\
& < 1-a_{i_1}x_1+a_{i_1}x_1\\
& = 1.
\end{align*}
When $i=i_2$, it follows that
\begin{align*}
a_{i_2}(s_2+(x_1-x_2))& = 1+a_{i_2}x_1-a_{i_2}x_2\\
& < 1+a_{i_2}x_1-a_{i_1}x_1\\
& \leq 1 && \text{Since }x_1\in R(s_1).
\end{align*}
Finally, if $i\not\in \{i_1,i_2\}$, then
\begin{align*}
a_{i}(s_2+(x_1-x_2))& = a_i(s_2-x_2) +a_ix_1\\
& \leq a_{i_2}(s_2-x_2) +a_ix_1 &&\text{Since }x_2 \in R(s_2)\\
& = 1 -a_{i_2}x_2+a_ix_1\\
& < 1-a_{i_1}x_1+a_ix_1\\
& \leq 1 &&\text{Since }x_1\in R(s_1).
\end{align*}
Hence $s_2+(x_1-x_2)\in \intr(B)$, giving a contradiction. Thus $a_{i_1}x_1=a_{i_2}x_2$.

Now suppose that $x_1\in\intr(R(s_1))$ and $x_2\in \intr(R(s_2))$. Assume to the contrary that $a_{i_1}\neq a_{i_2}$. We will again show that $s_2+(x_1-x_2)\in \intr(B)$. Since $a_{i_1}\neq a_{i_2}$ and $x_2\in \intr(R(s_2))$,
\begin{equation*}
a_{i_1}\cdot x_2<a_{i_2}\cdot x_2
\end{equation*}
and
\begin{equation*}
a_{i_1}\cdot(s_2-x_2)<a_{i_2}\cdot(s_2-x_2).
\end{equation*}
Let $i\in I$. If $i=i_1$ then using $a_{i_1}\cdot x_1 = a_{i_2}\cdot x_2$, it follows that
\begin{equation*}
a_{i_1}\cdot(s_2+(x_1-x_2)) = a_{i_1}\cdot(s_2-x_2) + a_{i_1}\cdot x_1< a_{i_2}\cdot(s_2-x_2)+a_{i_1}\cdot x_1= 1-a_{i_2}\cdot x_2+a_{i_1}\cdot x_1= 1.
\end{equation*}
If $i=i_2$ then
\begin{equation*}
a_{i_2}\cdot(s_2+(x_1-x_2)) = a_{i_2}\cdot s_2+ a_{i_2}\cdot x_1-a_{i_2}\cdot x_2
= 1+a_{i_2}\cdot x_1-a_{i_1}\cdot x_1
= 1+(a_{i_2}-a_{i_1})\cdot x_1 < 1,
\end{equation*}
where the inequality comes from $x_1\in \intr(R(s_1))$.
Finally, if $i\not\in \{i_1, i_2\}$ then
\begin{equation*}
a_{i}\cdot (s_2+(x_1-x_2)) = a_i\cdot(s_2-x_2)+a_i\cdot x_1
< a_{i_2}\cdot s_2-a_{i_2}\cdot x_2+a_i\cdot x_1
< 1 - a_{i_2}\cdot x_2+a_{i_1}\cdot x_1
= 1,
\end{equation*}
where the first inequality comes from $x_2\in \intr(R(s_2))$ and the second from $x_1\in \intr(R(s_1))$.
Hence, $s_2+(x_1-x_2)\in \intr(B)$, yielding a contradiction.\end{proof}


\paragraph{Mapping $R(S,B)+W_S$ onto $R(S',B') + W_{S'}$.} We now describe how one can bijectively map each spindle of $R(S,B)$ onto a spindle in $R(S',B')$ by a linear transformation. We will then be able to map $R(S,B)+W_S$ injectively onto $R(S',B') + W_{S'}$ by a piecewise affine map.

Given a particular polyhedrally-truncated affine lattice $S$, a maximal $S$-free polyhedron $B$ described as~\eqref{eq:B-desc}, and an invertible affine map $M(\cdot)+m$ such that $B' = M(B)+m$ contains the origin in its interior, we define linear transformations $T^{S,B,M,m}_i$ for each $i \in I$ given by $$T_i^{S,B,M,m}(r) = Mr + (a_i\cdot r)m.$$ 

\begin{lemma}\label{lem:inverse}
For every $i \in I$, $T_i^{S,B,M,m}(r) $ is an invertible linear transformation with $T_i^{S',B',M^{-1},-M^{-1}m}(r) = M^{-1}r - (a'_i\cdot r)M^{-1}m$ as its inverse.
\end{lemma}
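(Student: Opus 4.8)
The plan is to verify directly that the two stated linear maps compose to the identity, using the explicit formula relating $a_i'$ and $a_i$. Recall that $T_i^{S,B,M,m}(r) = Mr + (a_i\cdot r)m$ and that the claimed inverse is $T_i^{S',B',M^{-1},-M^{-1}m}(r) = M^{-1}r - (a_i'\cdot r)M^{-1}m$, where $a_i' = \frac{(M^{-1})^*(a_i)}{1+a_i\cdot M^{-1}(m)}$. Since both maps are clearly linear (each is a linear map plus a rank-one linear correction of the form $r\mapsto (c\cdot r)v$), it suffices to show that their composition in one order is the identity on $\R^n$; invertibility of a linear self-map of $\R^n$ then follows automatically, and the other composition is also the identity.

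First I would apply $T_i^{S,B,M,m}$ to a vector $q = M^{-1}r - (a_i'\cdot r)M^{-1}m$, i.e. compute
\begin{equation*}
M q + (a_i\cdot q)\,m = r - (a_i'\cdot r)\,m + \bigl(a_i\cdot M^{-1}r - (a_i'\cdot r)\,(a_i\cdot M^{-1}m)\bigr)\,m.
\end{equation*}
Collecting the coefficient of $m$, this equals $r + \bigl(-(a_i'\cdot r) + a_i\cdot M^{-1}r - (a_i'\cdot r)(a_i\cdot M^{-1}m)\bigr)m$, so the composition is the identity precisely when
\begin{equation*}
a_i\cdot M^{-1}r = (a_i'\cdot r)\bigl(1 + a_i\cdot M^{-1}m\bigr) \qquad \text{for all } r\in\R^n.
\end{equation*}
Using the adjoint identity $a_i\cdot M^{-1}r = (M^{-1})^*(a_i)\cdot r$, the left side is $(M^{-1})^*(a_i)\cdot r$, and substituting the definition of $a_i'$ makes the right side $(M^{-1})^*(a_i)\cdot r$ as well, so the equality holds. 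Hence $T_i^{S,B,M,m}\circ T_i^{S',B',M^{-1},-M^{-1}m} = \mathrm{id}$.

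There is essentially no obstacle here — the lemma is a bookkeeping verification. The only point requiring a little care is making sure the labels are consistent: one should check that applying the general definition of $T_i$ with the transformation $M^{-1}(\cdot) + (-M^{-1}m)$ and with the facet normals $a_i'$ of $B'$ in place of $a_i$ indeed produces $r\mapsto M^{-1}r - (a_i'\cdot r)M^{-1}m$, and that the hypothesis that $B'$ contains the origin in its interior (equivalently $1 + a_i\cdot M^{-1}m > 0$ for all $i$) guarantees the denominator in the formula for $a_i'$ is nonzero, so $a_i'$ is well-defined. Since the composition in the displayed order is the identity and both maps are linear endomorphisms of $\R^n$, each is invertible and is the inverse of the other, which is exactly the claim.
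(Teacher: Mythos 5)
Your proof is correct: the paper in fact states Lemma~\ref{lem:inverse} without proof, treating it as a routine verification, and your direct computation (composing the two maps, reducing to the identity $a_i\cdot M^{-1}r = (a'_i\cdot r)(1+a_i\cdot M^{-1}m)$, and invoking the definition of $a'_i$ together with the adjoint identity) is exactly the intended argument. Your added remarks — that $1+a_i\cdot M^{-1}m>0$ follows from $0\in\intr(B')$ so $a'_i$ is well-defined, and that a one-sided inverse of a linear endomorphism of $\R^n$ is a two-sided inverse — correctly fill in the only points needing care.
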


In the following two lemmas, we drop the superscripts in $T^{S,B,M,m}_i$ to save notational baggage; the lemmas are true for any tuple $S,B,M,m$ such that $S$ is a polyhedrally-truncated affine lattice, $B$ is a maximal $S$-free polyhedron with the origin in its interior, and $M(\cdot)+m$ is an invertible affine transformation such that $M(B)+m$ also contains the origin in its interior.

\begin{lemma}\label{lemma:map}
Let $s \in B\cap S$ and let $k \in I$ be such that $a_k\cdot s = 1$. Then $T_k(R(s,B)) = R(s',B')$, where $s' = Ms+m$.
\end{lemma}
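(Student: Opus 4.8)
The plan is to show that the linear map $T_k := T_k^{S,B,M,m}$ sends the system of linear inequalities defining the spindle $R(s,B)$ exactly onto the system defining $R(s',B')$, and then to invoke Lemma~\ref{lem:inverse} to upgrade the resulting containment to an equality. The first step I would take is to extract the sign information hidden in the hypothesis: the assumption that $T(B)$ contains the origin in its interior is, as noted right after Theorem~\ref{thm:trans-inv}, the statement $a_i\cdot(-M^{-1}m) < 1$ for all $i\in I$, which is exactly $c_i := 1 + a_i\cdot M^{-1}m > 0$ for all $i\in I$. These $c_i$ are precisely the denominators in the formula $a_i' = (M^{-1})^*(a_i)/c_i$, and their positivity is what will allow inequalities to be transported without reversing.

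The computational core is the identity
\begin{equation*}
(a_i' - a_k')\cdot T_k(r) \;=\; \frac{1}{c_i}\,(a_i - a_k)\cdot r \qquad \text{for all } r\in\R^n,\ i\in I,
\end{equation*}
together with $a_k'\cdot T_k(r) = a_k\cdot r$. Both are obtained by expanding $T_k(r) = Mr + (a_k\cdot r)m$ and using the adjoint identities $(M^{-1})^*(a_i)\cdot Mr = a_i\cdot r$ and $(M^{-1})^*(a_i)\cdot m = a_i\cdot M^{-1}m = c_i - 1$. As an immediate consequence, taking $r=s$ and using $a_k\cdot s = 1$ gives $T_k(s) = Ms+m = s'$ and $a_k'\cdot s' = 1$, so $k$ is indeed a valid index for the spindle $R(s',B')$ as well.

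Next I would use that $T_k$ is linear, so that $s' - T_k(r) = T_k(s) - T_k(r) = T_k(s-r)$; applying the displayed identity to $r$ and to $s-r$ respectively, and using $c_i > 0$, yields
\begin{align*}
(a_i'-a_k')\cdot T_k(r) \le 0 &\iff (a_i - a_k)\cdot r \le 0, \\
(a_i'-a_k')\cdot (s' - T_k(r)) \le 0 &\iff (a_i - a_k)\cdot (s-r) \le 0,
\end{align*}
for every $i\in I$. Comparing with the definitions of $R(s,B)$ and $R(s',B')$, this says exactly that $r\in R(s,B) \iff T_k(r)\in R(s',B')$. Since $T_k$ is an invertible linear transformation of $\R^n$ by Lemma~\ref{lem:inverse}, this equivalence immediately gives $T_k(R(s,B)) = R(s',B')$.

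I do not expect a genuine obstacle here: the argument is essentially bookkeeping with the formula for $a_i'$. The two points that need care are (a) recognizing that the ``origin in the interior of $T(B)$'' hypothesis is precisely the positivity $c_i > 0$ that keeps the transported inequalities pointing the right way, and (b) exploiting the linearity of $T_k$ so that the second family of spindle inequalities (those involving $s-r$) comes for free rather than requiring a separate computation.
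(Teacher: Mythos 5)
Your proof is correct and follows essentially the same route as the paper's: the same expansion of $a_i'\cdot T_k(\bar r)$ using the adjoint and the denominator $1+a_i\cdot M^{-1}m>0$, applied to both $r$ and $s-r$ via linearity of $T_k$. The only difference is cosmetic and in your favor: you record the computation as the exact identity $(a_i'-a_k')\cdot T_k(r)=\tfrac{1}{c_i}(a_i-a_k)\cdot r$, which yields the two inclusions simultaneously as a biconditional, whereas the paper proves only the one-directional inequality (its Claim~\ref{claim:1}) and obtains the reverse containment by rerunning the argument with $T_k^{-1}$.
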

\begin{proof}

We first establish the following claim:

\begin{claim}\label{claim:1} For any $\bar r\in \R^n$ and $i \in I$ such that $(a_i - a_k)\cdot \bar r \leq 0$, we have $(a'_i - a'_k)\cdot T_k(\bar r) \leq 0$.
\end{claim}

\begin{proof}
Consider any such $i\in I$ and $\bar r \in \R^n$ such that $(a_i - a_k)\cdot \bar r \leq 0$ (note that $i = k$ satisfies this hypothesis). We show that $a'_i\cdot T_k(\bar r) \leq a_k\cdot \bar r$. Indeed,
$$\begin{array}{rcl}
a'_i\cdot T_k(\bar r)&=& \frac{(M^{-1})^*(a_i)\cdot(M\bar r + (a_k\cdot \bar r)m)}{1 + a_i\cdot M^{-1}m} \\
& = & \frac{a_i\cdot M^{-1}((M\bar r + (a_k\cdot \bar r)m))}{1 + a_i\cdot M^{-1}m} \\
& = & \frac{a_i\cdot (\bar r + (a_k\cdot \bar r)M^{-1}m)}{1 + a_i\cdot M^{-1}m} \\
& = & \frac{a_i\cdot\bar r + (a_k\cdot \bar r)(a_i\cdot M^{-1}m)}{1 + a_i\cdot M^{-1}m} \\
& \leq & \frac{a_k\cdot\bar r + (a_k\cdot \bar r)(a_i\cdot M^{-1}m)}{1 + a_i\cdot M^{-1}m} \qquad \textrm{Using } (a_i - a_k)\cdot \bar r \leq 0 \\
& = & a_k\cdot \bar r.
\end{array}
$$
Observe that the inequality above holds at equality when $i = k$. Therefore, $(a'_i - a'_k)\cdot T_k(\bar r) \leq (a_k - a_k)\cdot \bar r = 0$.
\end{proof}

Now consider any $\hat r \in R(s,B)$. Therefore, for every $i \in I$ we have that $(a_i - a_k)\cdot \hat r \leq 0$ and $(a_i - a_k)\cdot (s -\hat r) \leq 0$. Observe that $T_k(s - \hat r) = T_k(s) - T_k(\hat r) = (Ms + m) - T_k(\hat r) = s' - T_k(\hat r)$ where the second equality follows from the fact that $a_k\cdot s = 1$. By Claim~\ref{claim:1}, we therefore have $(a'_i - a'_k)\cdot T_k(\hat r) \leq 0$ and $(a'_i - a'_k)\cdot (s' -T_k(\hat r)) \leq 0$. Hence, $T_k(\hat r) \in R(s',B')$. This shows that $T_k(R(s,B)) \subseteq R(s',B')$. Using a similar reasoning with the transformation $T_k^{-1}$, one can show that $T_k^{-1}(R(s',B')) \subseteq R(s,B)$, i.e., $R(s',B') \subseteq T_k(R(s,B))$. This completes the proof. \end{proof}

\begin{lemma}\label{lemma:patching-1}
Let $s_1, s_2 \in B \cap S$ and $w_1, w_2 \in W_S$ such that $(R(s_1) + w_1) \cap (R(s_2) + w_2) \neq \emptyset$ and let $x \in (R(s_1) + w_1) \cap (R(s_2) + w_2)$. Let $i_1, i_2 \in I$ be two indices such that $a_{i_1}\cdot s_1 = 1$ and $a_{i_2}\cdot s_2 = 1$. Then, $T_{i_1}(x - w_1) + Mw_1 = T_{i_2}(x - w_2) + Mw_2$.
\end{lemma}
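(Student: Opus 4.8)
The plan is to unwind the definitions and reduce the claimed identity
$$T_{i_1}(x - w_1) + Mw_1 = T_{i_2}(x - w_2) + Mw_2$$
to a pair of facts: (a) the two sides are simple affine functions of $x$ evaluated at a common point, and (b) the hypotheses of Lemma~\ref{lemma:map} together with the Collision Lemma pin down the relevant ``heights'' $a_{i_j}\cdot(x-w_j)$. Concretely, set $x_1 := x - w_1 \in R(s_1)$ and $x_2 := x - w_2 \in R(s_2)$; then $x_1 - x_2 = w_2 - w_1 \in W_S$ since $W_S$ is a subgroup. By definition $T_{i_1}(x_1) + Mw_1 = Mx_1 + (a_{i_1}\cdot x_1)m + Mw_1 = Mx + (a_{i_1}\cdot x_1)m$, and similarly $T_{i_2}(x_2) + Mw_2 = Mx + (a_{i_2}\cdot x_2)m$. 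So the claim is equivalent to $(a_{i_1}\cdot x_1)m = (a_{i_2}\cdot x_2)m$, i.e. to $a_{i_1}\cdot x_1 = a_{i_2}\cdot x_2$ (in the nontrivial case $m \neq 0$; if $m = 0$ the identity is immediate since then $T_i = M$ for all $i$).

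Now $a_{i_1}\cdot x_1 = a_{i_2}\cdot x_2$ is exactly the conclusion of the first part of the Collision Lemma (Lemma~\ref{lemma:collision}), applied with the points $s_1, s_2 \in B\cap S$, the indices $i_1, i_2$, and the points $x_1 \in R(s_1)$, $x_2 \in R(s_2)$ satisfying $x_1 - x_2 \in W_S$. So the proof is essentially: rewrite both sides, observe the reduction, and invoke Lemma~\ref{lemma:collision}. I would also double-check that the well-definedness of $T_{i_1}$ and $T_{i_2}$ (invertibility, Lemma~\ref{lem:inverse}) is not needed here — it isn't, since we only use the explicit formula $T_i(r) = Mr + (a_i\cdot r)m$.

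The main obstacle — really the only place any thought is required — is making sure the Collision Lemma applies verbatim, i.e. that $x_1$ and $x_2$ genuinely lie in $R(S,B)$ (so that they are legitimate inputs): this holds because $x_1 \in R(s_1) \subseteq R(S,B)$ and $x_2 \in R(s_2) \subseteq R(S,B)$ by the definition~\eqref{eq:lifting-desc} of $R(S,B)$ as the union of spindles, and $s_1, s_2 \in B\cap S$ ensures the spindles $R(s_1), R(s_2)$ are among those in the union. One should also note that the statement of Lemma~\ref{lemma:patching-1} fixes arbitrary indices $i_1, i_2$ with $a_{i_j}\cdot s_j = 1$ rather than the specific index used in defining the spindle $R(s_j)$; but the Collision Lemma is likewise stated for arbitrary such indices, so there is no mismatch. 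I would present the argument as a short sequence of equalities: expand $T_{i_1}(x-w_1)+Mw_1$, expand $T_{i_2}(x-w_2)+Mw_2$, subtract, and reduce to $a_{i_1}\cdot(x-w_1) = a_{i_2}\cdot(x-w_2)$, which is Lemma~\ref{lemma:collision}. This will be at most a few lines.
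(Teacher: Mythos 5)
Your proof is correct and follows essentially the same route as the paper's: expand both sides to $Mx + (a_{i_j}\cdot(x-w_j))m$ and invoke the first part of the Collision Lemma (Lemma~\ref{lemma:collision}) to equate the coefficients $a_{i_1}\cdot(x-w_1)$ and $a_{i_2}\cdot(x-w_2)$. Your additional checks (that $x-w_1 \in R(s_1)$, $x-w_2 \in R(s_2)$, and their difference lies in $W_S$) are exactly the hypotheses the paper implicitly verifies, so there is nothing to add.
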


\begin{proof}

Observe that $$\begin{array}{rcl} T_{i_1}(x - w_1) + Mw_1 & = & M(x-w_1) + (a_{i_1}\cdot (x-w_1))m + Mw_1 \\ & = & Mx  + (a_{i_1}\cdot (x-w_1))m\\ & = & Mx  + (a_{i_2}\cdot (x-w_2))m \qquad \textrm{Using the Collision Lemma (Lemma~\ref{lemma:collision})} \\ &=&  M(x -w_2) + (a_{i_2}\cdot (x-w_2))m + Mw_2 \\ &= & T_{i_2}(x - w_2) + Mw_2.\end{array}$$

\end{proof}
%
%

\paragraph{Proof of Theorem~\ref{thm:trans-inv}.}
\begin{proof}
Note that if $B$ (and $B'$) is a halfspace, then the lifting region is all of $\R^n$, and there is nothing to show. Thus, by Proposition~\ref{prop:basic-facts}, we assume $\intr(B \cap \conv(S)) \neq \emptyset$. It suffices to show that $R(S,B) + W_S = \R^n$ implies $R(S',B') + W_{S'} = \R^n$ because the other direction follows by swapping the roles of $S,B$ and $S', B'$ and using the transformation $M^{-1}(\cdot)-M^{-1}m$ instead of $M(\cdot)+m$.

Assume $R(S,B) + W_S = \R^n$. For every $s \in B\cap S$ and $w \in W_S$, define the polyhedron $P_{s,w} = R(s,B) + w$ and define the map $A_{s,w}: P_{s,w} \to \R^n$ as $A_{s,w}(x) = T^{S,B,M,m}_{k}(x-w) + Mw$, where $k \in I$ is such that $a_k\cdot s = 1$. Since $R(S,B) + W_S = \R^n$, we have

\begin{equation*}
\bigcup_{s \in B\cap S, w \in W_S} P_{s,w} = R(S,B) + W_S = \R^n.
\end{equation*}
By Theorem~\ref{thm:finite-intersection}, any bounded set intersects only finitely many polyhedra from the family $\{P_{s,w} : s \in B\cap S, w \in W_S\}$. Moreover, by Lemma~\ref{lemma:patching-1}, we observe that for any two pairs $s_1, w_1$ and $s_2, w_2$ we have that $A_{s_1, w_1}(x) = A_{s_2, w_2}(x)$ for all $x \in P_{s_1,w_1}\cap P_{s_2,w_2}$. Since each $A_{s,w}$ is an affine map on $P_{s,w}$, Lemma~\ref{lemma:patching-2} shows that there exists a continuous map $A :\R^n \to \R^n$ such that $A$ restricted to $P_{s,w}$ is equal to $A_{s,w}$. Observe that

$$\begin{array}{rcl} R(S',B') + W_{S'} & = & R(S',B') + MW_S \qquad\qquad \textrm{by Proposition~\ref{prop:W-manipulation}(i)} \\
& = & \bigcup_{s' \in B'\cap S', w \in W_S} (R(s',B') + Mw) \\
& = & \bigcup_{s \in B\cap S, w \in W_S} (R(Ms + m, M(B)+m) + Mw) \\
& = & \bigcup_{s \in B\cap S, w \in W_S} A_{s,w}(R(s,B) + w) \\
& = & A( \bigcup_{s \in B\cap S, w \in W_S} (R(s,B) + w) \\
& = & A(R(S,B) + W_S) \\
& = & A(\R^n)
\end{array}
$$
where the fourth equality follows from the definition of $A_{s,w}$ and Lemma~\ref{lemma:map}. If we can show that $A$ is injective, then by Theorem~\ref{thm:invariance-domain}, $A(\R^n) = R(S',B') + W_{S'}$ is open. By Lemma~\ref{lemma:closed}, $R(S',B') + W_{S'}$ is also closed (since $\intr(B \cap \conv(S)) \neq \emptyset$ implies $\intr(B' \cap \conv(S')) \neq \emptyset$ ). Since $\R^n$ is connected, the only nonempty closed and open subset of $\R^n$ is $\R^n$ itself. Thus, $R(S',B') + W_{S'} = \R^n$.

Therefore, it is sufficient to show that $A$ is an injective function. Choose $x,y\in \R^n$ such that $A(x)=A(y)$. Unfolding the definition, this implies that there exists $s_1, s_2\in S\cap B$, $w_1, w_2\in W_S$, and $k_1, k_2\in I$ such that $x\in R(s_1)+w_1$, $y\in R(s_2)+w_2$, and $T^{S,B,M,m}_{k_1}(x-w_1)+Mw_1=T^{S,B,M,m}_{k_2}(y-w_2)+Mw_2 =: z^*$. By Lemma~\ref{lemma:map}, $z^* \in (R(s_1')+Mw_1)\cap (R(s_2')+Mw_2)$, where $s_1'=Ms_1+m$ and $s_2'=Ms_2+m$. Note that $R(s_1')$ and $R(s_2')$ are spindles corresponding to $R(S',B')$, and by Proposition~\ref{prop:W-manipulation}(i), $Mw_1, Mw_2\in W_{S'}$.  Therefore, by Lemma~\ref{lemma:patching-1}, $T^{S,B,M^{-1},-M^{-1}m}_{k_1}(z^*-Mw_1)+w_1=T^{S,B,M^{-1},-M^{-1}m}_{k_2}(z^*-Mw_2)+w_2$. By Lemma~\ref{lem:inverse}, $T^{S',B', M^{-1}, -M^{-1}m}_i$ is the inverse of $T^{S,B,M,m}_i$ for each $i\in I$, and so we have $T^{S,B,M^{-1},-M^{-1}m}_{k_1}(z^*-Mw_1)+w_1 = T^{S,B,M^{-1},-M^{-1}m}_{k_1}\left((T^{S,B,M,m}_{k_1}(x-w_1)\right)+w_1 = x.$
Similarly, $T^{S',B',M^{-1}, -M^{-1}m}_{k_2}(z^*-Mw_2)+w_2=y$. Hence $x=y$ and $A$ is injective.  \end{proof}

\section{Generation of S-free sets using coproducts}\label{s:co-product}

Here we display how the covering property is preserved under the so-called {\em coproduct} operation. Given a convex set $C\subseteq \R^n$ containing the origin in its interior, we say $X \subseteq \R^n$ is a {\em prepolar} of $C$ if $X^* = C$, i.e., $C$ is the polar of $X$. We use the notation $C^\bullet$ to denote the smallest prepolar of $C$ with respect to set inclusion. To the best of our knowledge, this concept was first introduced in~\cite{conforti2013cut}, where the authors establish that there is a {\em unique} smallest prepolar. Given closed convex sets $C_1 \subseteq \R^{n_1}, C_2 \subseteq \R^{n_2}$ (possibly unbounded) such that each contains the origin in its interior, define the {\em coproduct of $C_1, C_2$} in $\R^{n_1+ n_2}$ as

\begin{equation}\label{eq:coprod-ineq-2}
C_1\copr C_2 := (C_1^\bullet \times C_2^\bullet)^*.
\end{equation}

If the convex sets are polyhedra given using inequality descriptions, $P_1 = \{x\in\R^{n_1}: a^1_ix \leq 1,~ \forall i\in I_1\}$ and  $P_2 = \{x\in\R^{n_2}: a^2_jx\leq 1, ~\forall j\in I_2\}$, then
\begin{equation}\label{eq:coprod-ineq}
P_1\copr P_2 = \{(x,y) \in\R^{n_1}\times \R^{n_2} : (a^1_i, a^2_j)\cdot (x,y)\leq 1, ~\forall i\in I_1, ~\forall j\in I_2\}.
\end{equation}

The coproduct definition is motivated as a dual operation to Cartesian products: if $P_1$ and $P_2$ are polytopes containing the origin in their interiors, then $(P_1 \times P_2)^* = P_1^* \copr P_2^*$. In this case, our definition specializes to the operation known as the {\em free sum} in polytope theory~\cite[p. 250]{MR1730169}: $P_1 \copr P_2 := \conv ( P_1 \times \{o_2\} \cup \{o_1\} \times P_2)$. The free sum operation was utilized in Section 4 of~\cite{averkov-basu-lifting}, where the operation was also called the coproduct following a suggestion by Peter McMullen. Since our construction is a generalization to the case where $P_1, P_2$ are allowed to be unbounded polyhedra, we retain the terminology of coproduct. If we take closed convex hulls, then the free sum operation can be extended to unbounded sets. Using this extension for unbounded sets, the free sum operation is {\em different} from the coproduct operation defined in~\eqref{eq:coprod-ineq-2}  -- consider the coproduct and free sum of a ray in $\R$ containing the origin and an interval in $\R$ containing the origin. In fact, $\overline\conv (C_1 \times \{o_2\} \cup \{o_1\} \times C_2) = (C_1^* \times C_2^*)^*$ and the second term is different from $(C_1^\bullet \times C_2^\bullet)^*$ when $C_1$ or $C_2$ are unbounded. One can check that parts (ii) and (iii) of Theorem~\ref{thm:coproduct} below fail to hold if one uses $\overline\conv (C_1 \times \{o_2\} \cup \{o_1\} \times C_2) = (C_1^* \times C_2^*)^*$ as the generalization of the operation defined in~\cite{averkov-basu-lifting}.

If each $a^1_i$, $i\in I_1$, gives a facet-defining inequality for $P_1$ and each $a^2_j$, $j\in I_2$, gives a facet-defining inequality for $P_2$, then each inequality in the description in~\eqref{eq:coprod-ineq} is facet-defining. This follows from the fact that each $a^1_i$, $i\in I_1$ is a vertex of $P_1^*$, and similarly, each $a^2_j$, $j\in I_2$ is a vertex of $P_2^*$, and so $(a^1_i, a^2_j)$, $i \in I_1, j \in I_2$ is a vertex of $P_1^* \times P_2^* = \overline\conv(P_1^\bullet \times P_2^\bullet)$.

%
%
%
For $h\in \{1,2\}$, let $S_h = (b_h + \Lambda_h) \cap P_h$ be two polyhedrally-truncated affine lattices in $\R^{n_h}$ where $P_h = \conv(S_h)$ is a polyhedron. Then $S_1\times S_2 = ((b_1, b_2) + (\Lambda_1\times \Lambda_2)) \cap (P_1\times P_2)$ is also a polyhedrally-truncated affine lattice in $\R^{n_1 + n_2}$. The following result creates $S_1\times S_2$-free sets from $S_1$-free sets and $S_2$-free sets.

\begin{theorem}\label{thm:coproduct}
For $h\in \{1,2\}$, let $B_h\subseteq \R^{n_h}$ be given by facet defining inequalities $\{x\in\R^{n_h}: a^h_ix \leq 1,~ \forall i\in I_h\}$ and let $S_h$ be polyhedrally-truncated affine lattices. Let $\mu \in (0,1)$. Then
\begin{itemize}
\item[(i)] If $B_h$ is $S_h$-free for $h\in \{1,2\}$, then $\frac{B_1}{\mu}\copr \frac{B_2}{1-\mu}$ is $S_1\times S_2$-free.
\item[(ii)] If $B_h$ is maximal $S_h$-free for $h\in \{1,2\}$, then $\frac{B_1}{\mu}\copr \frac{B_2}{1-\mu}$ is maximal $S_1\times S_2$-free.
\item[(iii)] If $B_h$ is maximal $S_h$-free with the covering property for $h\in \{1,2\}$, then $\frac{B_1}{\mu}\copr \frac{B_2}{1-\mu}$ is maximal $S_1\times S_2$-free with the covering property.
\end{itemize}
\end{theorem}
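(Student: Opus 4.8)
\emph{Overview of the plan.} I would prove the three parts in order, with part (i) setting up the basic inequality-level combinatorics of the coproduct, part (ii) upgrading to maximality via the facet-point characterization in Proposition~\ref{prop:basic-facts}, and part (iii) being the real work: producing a covering of $\R^{n_1+n_2}$ from coverings of $\R^{n_1}$ and $\R^{n_2}$. Throughout, write $B = \frac{B_1}{\mu}\copr\frac{B_2}{1-\mu}$, so by~\eqref{eq:coprod-ineq} the facet normals of $B$ are exactly the vectors $(\mu a^1_i,\,(1-\mu)a^2_j)$ for $i\in I_1,\,j\in I_2$ (note $\frac{B_1}{\mu}$ has facet normals $\mu a^1_i$). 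The key structural observation I would isolate first is a description of a typical spindle $R((s_1,s_2),B)$: if $(s_1,s_2)\in B\cap(S_1\times S_2)$ with tight index pair $(i_1,i_2)$ (so $\mu a^1_{i_1}\cdot s_1 + (1-\mu)a^2_{i_2}\cdot s_2 = 1$), then since every $S_h$-free $B_h$ forces $s_h\in\bd B_h$, one shows $a^1_{i_1}\cdot s_1 = 1 = a^2_{i_2}\cdot s_2$ separately (the convex combination of two numbers each $\le 1$ equals $1$ only if both equal $1$, using $B_h$-freeness to get $\le 1$). Then the spindle inequalities $(\text{normal}_i - \text{normal}_{i_1i_2})\cdot r\le 0$ and the symmetric ones decouple nicely because differences of coproduct normals split as $(\mu(a^1_i - a^1_{i_1}),\,(1-\mu)(a^2_j - a^2_{i_2}))$. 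I expect this to give, roughly, $R((s_1,s_2),B) \supseteq$ (a set built from $R(s_1,B_1)$ and $R(s_2,B_2)$ scaled by $1/\mu,1/(1-\mu)$), and the containment in the covering-relevant direction is what matters.

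\emph{Parts (i) and (ii).} For (i): suppose $(z_1,z_2)\in\intr(B)\cap(S_1\times S_2)$. Then $z_h\in b_h+\Lambda_h$ and $z_h\in P_h=\conv(S_h)$, so $z_h\in S_h$, and $\sum \mu a^1_i\cdot z_1/\mu$-type inequalities being strict forces, for the tight pair, $\mu(a^1_{i_1}\cdot\mu^{-1}z_1)$... more carefully: $(z_1,z_2)\in\intr(B)$ means $(\mu a^1_i,(1-\mu)a^2_j)\cdot(z_1,z_2)<1$ for all $i,j$, i.e. $\mu(a^1_i\cdot z_1) + (1-\mu)(a^2_j\cdot z_2)<1$. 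Taking $i$ maximizing $a^1_i\cdot z_1$ and $j$ maximizing $a^2_j\cdot z_2$, if $z_1\notin\intr(\frac{B_1}{\mu})$ then $\mu a^1_i\cdot z_1\ge 1$ for some $i$, and likewise for $z_2$; but strictness of all coproduct inequalities combined with $z_1\in S_1\subseteq B_1\Rightarrow \mu a^1_i\cdot z_1\le \mu<1$... this needs care since $B_1$ is only $S_1$-free, not $0$ in interior necessarily — but it is, by hypothesis of (iii)/(ii); for (i) alone I would just argue directly that strictness of $\mu(a^1_i\cdot z_1)+(1-\mu)(a^2_j\cdot z_2)<1$ for the maximizing $i,j$ forces $z_1\in\intr\frac{B_1}{\mu}$ or $z_2\in\intr\frac{B_2}{1-\mu}$, contradicting $S_h$-freeness of $B_h$ for the corresponding $h$. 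For (ii): by Proposition~\ref{prop:basic-facts} it suffices to show $B$ is $S_1\times S_2$-free (done in (i)) and that every facet of $B$ contains a point of $S_1\times S_2$ in its relative interior. A facet of $B$ is defined by $(\mu a^1_i,(1-\mu)a^2_j)$; take $s_1\in\relintr(\text{facet }i\text{ of }B_1)\cap S_1$ and $s_2\in\relintr(\text{facet }j\text{ of }B_2)\cap S_2$ (exist by maximality of $B_h$ and Proposition~\ref{prop:basic-facts}), and check $(s_1,s_2)$ lies in the relative interior of the chosen facet of $B$ while strictly satisfying all other coproduct inequalities — this is a routine computation using $a^1_i\cdot s_1=1$, $a^1_{i'}\cdot s_1<1$ for $i'\ne i$, and likewise in coordinate $2$.

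\emph{Part (iii) — the main step.} Assume $R(S_1,B_1)+W_{S_1}=\R^{n_1}$ and $R(S_2,B_2)+W_{S_2}=\R^{n_2}$; I must show $R(S_1\times S_2,B)+W_{S_1\times S_2}=\R^{n_1+n_2}$, and by Proposition~\ref{prop:W-manipulation}(ii) $W_{S_1\times S_2}=W_{S_1}\times W_{S_2}$. Fix an arbitrary $(x_1,x_2)\in\R^{n_1+n_2}$. Since $\frac{B_h}{\cdot}$ rescaling does not change the lifting-region-covers-$\R^{n_h}$ property — here I would invoke Theorem~\ref{thm:trans-inv} with $T$ the scaling $r\mapsto r/\mu$ (an invertible linear map fixing the origin), so $R(S_1,\frac{B_1}{\mu})$ also has the covering property, and similarly for $B_2$ — there exist $s_1\in \frac{B_1}{\mu}\cap S_1$ and $w_1\in W_{S_1}$ with $x_1\in R(s_1,\frac{B_1}{\mu})+w_1$, and $s_2,w_2$ analogously. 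The crux is then to verify the pointwise inclusion
\begin{equation*}
\Bigl(R\bigl(s_1,\tfrac{B_1}{\mu}\bigr)+w_1\Bigr)\times\Bigl(R\bigl(s_2,\tfrac{B_2}{1-\mu}\bigr)+w_2\Bigr)\ \subseteq\ R\bigl((s_1,s_2),B\bigr)+(w_1,w_2),
\end{equation*}
after checking $(s_1,s_2)\in B\cap(S_1\times S_2)$, which holds because $\mu a^1_{i_1}\cdot s_1 = 1 = (1-\mu)a^2_{i_2}\cdot s_2$ gives $(\mu a^1_{i_1},(1-\mu)a^2_{i_2})\cdot(s_1,s_2) = \mu + (1-\mu)\cdot\!\!$... wait — one must pick the right tight index of $B$ for $(s_1,s_2)$; since $s_1$ is on facet $i_1$ of $\frac{B_1}{\mu}$ and $s_2$ on facet $i_2$ of $\frac{B_2}{1-\mu}$, the point $(s_1,s_2)$ satisfies $(\mu a^1_{i_1},(1-\mu)a^2_{i_2})\cdot(s_1,s_2) = \mu a^1_{i_1}\cdot s_1/1 + \ldots$; actually $\mu a^1_{i_1}$ is the facet normal of $\frac{B_1}{\mu}$ so $\mu a^1_{i_1}\cdot s_1 = 1$, hence the coproduct inequality indexed $(i_1,i_2)$ evaluates to $1$ exactly — good, so $(s_1,s_2)\in\bd B$ with tight pair $(i_1,i_2)$. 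The displayed inclusion then reduces, after translating by $-(w_1,w_2)$, to: if $r_h$ satisfies the spindle inequalities for $R(s_h,\frac{B_h}{\cdot})$ for $h=1,2$, then $(r_1,r_2)$ satisfies the spindle inequalities for $R((s_1,s_2),B)$. Each spindle inequality for $B$ has the form $(\mu(a^1_i-a^1_{i_1}),(1-\mu)(a^2_j-a^2_{i_2}))\cdot(r_1,r_2)\le 0$, i.e. $\mu(a^1_i-a^1_{i_1})\cdot r_1 + (1-\mu)(a^2_j-a^2_{i_2})\cdot r_2\le 0$, which is the sum of two nonpositive terms (the $B_1$-spindle inequality scaled by $\mu$ and the $B_2$-spindle inequality scaled by $1-\mu$); the symmetric inequalities $(\ldots)\cdot((s_1,s_2)-(r_1,r_2))\le 0$ split the same way. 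This finishes the covering since $(x_1,x_2)\in$ the left-hand product $\subseteq R((s_1,s_2),B)+(w_1,w_2)\subseteq R(S_1\times S_2,B)+W_{S_1\times S_2}$.

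\emph{Where the difficulty lies.} Parts (i)–(ii) are bookkeeping with the facet description~\eqref{eq:coprod-ineq} and Proposition~\ref{prop:basic-facts}. The genuinely delicate point in (iii) is the identification of $B\cap(S_1\times S_2)$ and the matching of tight indices: one needs that the spindle of $B$ at a "product point" $(s_1,s_2)$ genuinely contains the product of the component spindles, which rests on the clean splitting of coproduct-normal differences and on the fact — itself requiring the $S_h$-free hypothesis via Proposition~\ref{prop:basic-facts} — that for a product point the two components are each tight on their respective $B_h$. A secondary subtlety is whether one needs $R(S_1\times S_2,B)$ to literally equal $\bigcup_{(s_1,s_2)}R((s_1,s_2),B)$ with the union only over \emph{product} points $B\cap(S_1\times S_2) = (B_1\cap S_1)\times(B_2\cap S_2)$? — in fact $B\cap(S_1\times S_2)$ need not be a product, but that is harmless: we only need the \emph{sufficient} direction, i.e. that some product point's translated spindle covers our target $(x_1,x_2)$, so I would only ever use product points $(s_1,s_2)$ with $s_h\in\frac{B_h}{\cdot}\cap S_h$ and verify directly that such a point lies in $B$, which it does by the computation above. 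The use of Theorem~\ref{thm:trans-inv} to pass from $B_h$ to $\frac{B_h}{\mu},\frac{B_h}{1-\mu}$ is the clean way to handle the scalings without re-deriving the covering property from scratch.
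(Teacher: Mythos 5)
Your parts (i) and (ii) follow the paper's argument (in (i) the paper takes the contrapositive: for $(s_1,s_2)\in S_1\times S_2$, $S_h$-freeness supplies indices with $a^h_{\bar i}\cdot s_h\ge 1$, so the coproduct inequality for $(\bar i,\bar j)$ is at least $\mu+(1-\mu)=1$; note that your conclusion ``$z_1\in\intr\frac{B_1}{\mu}$'' should read $z_1\in\intr(B_1)$, since only the latter contradicts $S_1$-freeness). The heart of your part (iii) --- the coproduct normal differences split as $\bigl(\mu(a^1_i-a^1_{i_1}),(1-\mu)(a^2_j-a^2_{i_2})\bigr)$, so a coproduct spindle inequality is a sum of a nonnegative multiple of a $B_1$-spindle inequality and one of a $B_2$-spindle inequality --- is exactly the paper's key containment $R(S_1,B_1)\times R(S_2,B_2)\subseteq R\bigl(S_1\times S_2,\frac{B_1}{\mu}\copr\frac{B_2}{1-\mu}\bigr)$, after which the covering follows by taking products, as you say.

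However, the rescaling detour in (iii) is a genuine error. Theorem~\ref{thm:trans-inv} transforms $S$ and $B$ \emph{together}: with $T(r)=r/\mu$ it yields the covering property of $R\bigl(\frac{S_1}{\mu},\frac{B_1}{\mu}\bigr)$ relative to $W_{S_1/\mu}$, not of ``$R\bigl(S_1,\frac{B_1}{\mu}\bigr)$''. The latter is not even a well-defined lifting region: $\frac{B_1}{\mu}=\{y:a^1_i\cdot y\le 1/\mu\}$ strictly contains the maximal $S_1$-free set $B_1$, hence is not $S_1$-free, and points of $S_1$ need not lie on its boundary, so there are no anchored spindles. The inconsistency shows up in your own tight-index computation: under your normalization $\mu a^1_{i_1}\cdot s_1=1$ and $(1-\mu)a^2_{i_2}\cdot s_2=1$, the coproduct inequality at $(s_1,s_2)$ evaluates to $1+1=2$, not $1$, so $(s_1,s_2)$ would lie outside $\frac{B_1}{\mu}\copr\frac{B_2}{1-\mu}$. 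The fix is simply to drop the rescaling: the hypothesis $R(S_h,B_h)+W_{S_h}=\R^{n_h}$ already provides $s_h\in B_h\cap S_h$ with $a^h_{i_h}\cdot s_h=1$ and $x_h\in R(s_h,B_h)+w_h$; then $(\mu a^1_{i_1},(1-\mu)a^2_{i_2})\cdot(s_1,s_2)=\mu\cdot 1+(1-\mu)\cdot 1=1$ places $(s_1,s_2)$ on the facet of the coproduct indexed by $(i_1,i_2)$, and your splitting argument goes through verbatim with the unscaled spindles $R(s_h,B_h)$. This is precisely what the paper does.
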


\begin{proof}
\begin{itemize}
\item[(i)] 
Note that
    \begin{equation*}
    \frac{B_1}{\mu} \copr \frac{B_2}{1-\mu} = \{(x_1, x_2)\in\R^{n_1+n_2}: (\mu a_i^1, (1-\mu)a_j^2)\cdot(x_1, x_2)\leq 1, ~\forall i\in I_1,~ j\in I_2\}.
    \end{equation*}
    Let $(s_1, s_2)\in S_1\times S_2$. As $B_1$ is $S_1$ free, there exists an $\overline{i}\in I_1$ such that $a_{\overline{i}}^1\cdot s_1 \geq 1$. Similarly, there is a $\overline{j}\in I_2$ such that $a_{\overline{j}}^2\cdot s_2 \geq 1$. This implies that $(\mu a_{\overline{i}}^1, (1-\mu)a_{\overline{j}}^2)\cdot(s_1, s_2)\geq 1$. Hence, $(s_1, s_2)\not\in \intr(\frac{B_1}{\mu}\copr \frac{B_2}{1-\mu})$.
\item[(ii)] From part (i) and Proposition~\ref{prop:basic-facts}, it is suffices to show that every facet of $\frac{B_1}{\mu}\copr \frac{B_2}{1-\mu}$ contains an $S_1\times S_2$ point in its relative interior. As noted earlier, each inequality in $\frac{B_1}{\mu}\copr \frac{B_2}{1-\mu}$, as given by~\eqref{eq:coprod-ineq}, is facet defining. Consider the facet defined by $(\mu a_{\overline{i}}^1,(1-\mu) a_{\overline{j}}^2)$. Since $a_{\overline{i}}^1$ defines a facet in $B_1$, there exists some $s_1\in S_1$ such that $a_{\overline{i}}^1\cdot s_1 = 1$ and $a_i^1\cdot s_1<1$ for $i \in I_1$ with $i \neq \overline{i}$. Similarly, there exists a $s_2\in S_2$ such that $a_{\overline{j}}^2\cdot s_2 = 1$ and $a_j^2\cdot s_2 <1$ for $ j\in I_2$ with $j \neq \overline{j}$. It follows that $(\mu a_{\overline{i}}^1,(1-\mu) a_{\overline{j}}^2)\cdot (s_1, s_2) = 1$ and $(\mu a_i^1,(1-\mu) a_j^2)\cdot (s_1, s_2) < 1$ for $(i,j)\neq (\overline{i}, \overline{j})$. Hence $(s_1, s_2)$ is in the relative interior of the facet defined by $(\mu a_{\overline{i}}^1,(1-\mu) a_{\overline{j}}^2)$.
\item[(iii)] In order to show that $\frac{B_1}{\mu}\copr \frac{B_2}{1-\mu}$ has the covering property, it is sufficient to show that $\R^{n_1+n_2}\subseteq R+W_{S_1\times S_2}$, where $R = R\left(S_1 \times S_2, \frac{B_1}{\mu}\copr \frac{B_s}{1-\mu}\right)$.


Consider $s_1 \in B_1 \cap S_1$ and $s_2 \in B_2 \cap S_2$. Let $\bar i \in I_1$ index the facet of $B_1$ containing $s_1$ and $\bar j \in I_2$ index the facet of $B_2$ containing $s_2$. Calculations similar to parts (i) and (ii) above show that $(s_1, s_2)$ lies on the facet of $\frac{B_1}{\mu}\copr \frac{B_s}{1-\mu}$ indexed by $(\bar i, \bar j)$. We claim that the spindle $R(s_1, s_2)$ corresponding to $\frac{B_1}{\mu}\copr \frac{B_2}{1-\mu}$ contains the Cartesian product $R(s_1)\times R(s_2)$. Indeed, a vector $(x_1, x_2)\in R(s_1, s_2)$ if and only if
    \begin{equation*}
    \left((\mu a_i^1, (1-\mu)a_j^2)-(\mu a_{\overline{i}}^1,(1-\mu)a_{\overline{j}}^2)\right)\cdot(x_1, x_2) \leq 0, ~\forall i\in I_1, ~j\in I_2
    \end{equation*}
    and
    \begin{equation*}
    \left((\mu a_i^1, (1-\mu)a_j^2)-(\mu a_{\overline{i}}^1,(1-\mu)a_{\overline{j}}^2)\right)\cdot((s_1,s_2)-(x_1, x_2)) \leq 0, ~\forall i\in I_1, ~j\in I_2.
    \end{equation*}
    where $(\bar i, \bar j)$ indexes the facet containing $(s_1, s_2)$.
    Using the definition of $R(s_i)$, the latter condition follows since $x_1\in R(s_1)$, $x_2\in R(s_2)$, and $\mu\in (0,1)$. 
    Therefore, we get the containment $R(S_1, B_1)\times R(S_2, B_2)\subseteq R(S_1\times S_2,\frac{B_1}{\mu}\copr \frac{B_2}{1-\mu})$.

    From Proposition \ref{prop:W-manipulation}, we have that $W_{S_1\times S_2} = W_{S_1}\times W_{S_2}$. Since $B_1$ and $B_2$ are assumed to each have the covering property, it follows that
    \begin{align*}
    \R^{n_1+n_2} & = \R^{n_1} \times \R^{n_2} \\
    & = (R(S_1,B_1)+W_{S_1})\times (R(S_2,B_2)+W_{S_2})\\
     & = (R(S_1,B_1)\times R(S_2,B_2))+(W_{S_1}\times W_{S_2})\\
     & \subseteq R(S_1 \times S_2,\frac{B_1}{\mu}\copr \frac{B_2}{1-\mu}) + (W_{S_1}\times W_{S_2})\\
     & = R(S_1 \times S_2,\frac{B_1}{\mu}\copr \frac{B_2}{1-\mu}) + W_{S_1 \times S_2}.
    \end{align*}
    Hence, $\frac{B_1}{\mu}\copr \frac{B_2}{1-\mu}$ has the covering property.
\end{itemize}\end{proof}

Note that (i) above holds for general closed sets $S_h$ and $S_h$-free sets $B_h$.

\section{Limits of maximal S-free sets with the covering property}

Let $m \in \N$ be fixed. For $t\in \N$, let $A^t \in \R^{m \times n}$ be a sequence of matrices and $b^t \in \R^m$ be a sequence of vectors such that $A^t \to A$ and $b^t \to b$ (both convergences are entrywise, i.e., convergence in the standard topology). Let $P_t = \{x\in\R^n: A^t\cdot x\leq b^t\}$ be the sequence of polyhedra defined $A^t, b^t$. We say that $P_t$ converges to the polyhedron $P :=  \{x\in\R^n: A\cdot x\leq b\}$ and we write this as $P_t\to P$. We make some observations about this convergence.

\begin{prop}\label{prop:NullConvergence}
Let $\{A^t\}_{t=1}^\infty$ be a sequence of matrices in $\R^{n\times m}$ converging entrywise to a matrix $A$. If the dimension of the nullspace of $A^t$ is fixed for all $t$, say with value $k$, then the dimension of the nullspace of $A$ is at least $k$.
\end{prop}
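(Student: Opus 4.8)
The plan is to prove this via the rank--nullity theorem combined with the lower semicontinuity of rank. I would first restate the hypothesis in terms of rank: since $A^t$ and $A$ are $n \times m$ matrices, the rank--nullity theorem gives $\operatorname{rank}(A^t) = m - \dim(\nul(A^t)) = m - k$ for all $t$, and it suffices to show $\operatorname{rank}(A) \le m - k$, which is equivalent to $\dim(\nul(A)) \ge k$. So the whole statement reduces to: the rank of $A$ is at most the eventual rank of the $A^t$, i.e., rank does not jump up in the limit.

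Next I would argue the rank bound directly. Suppose for contradiction that $\operatorname{rank}(A) \ge m - k + 1$. Then $A$ has some $(m-k+1) \times (m-k+1)$ submatrix with nonzero determinant. Since $A^t \to A$ entrywise, the corresponding submatrix of $A^t$ converges entrywise to this submatrix of $A$, and the determinant is a polynomial (hence continuous) function of the entries. Therefore the determinant of that submatrix of $A^t$ converges to a nonzero value, so it is nonzero for all sufficiently large $t$. But then $\operatorname{rank}(A^t) \ge m - k + 1$ for large $t$, contradicting $\operatorname{rank}(A^t) = m - k$. Hence $\operatorname{rank}(A) \le m - k$, and applying rank--nullity once more to $A$ yields $\dim(\nul(A)) = m - \operatorname{rank}(A) \ge k$.

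An alternative route, which I might use instead or in parallel, is to work directly with the nullspaces: pick a basis $v^t_1, \dots, v^t_k$ of $\nul(A^t)$, normalized to lie on the unit sphere, and (after passing to a subsequence, using compactness of the sphere and a Gram--Schmidt argument to keep them ``uniformly independent'') extract limits $v_1, \dots, v_k$. Each satisfies $A v_i = \lim A^t v^t_i = 0$, so they lie in $\nul(A)$; the uniform-independence control ensures the limits remain linearly independent, giving $\dim(\nul(A)) \ge k$. This is more hands-on but requires care with the subsequence extraction.

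The main obstacle is essentially bookkeeping rather than conceptual: in the determinant approach one must be careful that ``rank $\ge r$'' is genuinely characterized by the existence of \emph{some} nonsingular $r \times r$ submatrix and that this condition is preserved under small perturbations (an open condition), which is exactly the continuity of the determinant; in the nullspace approach the delicate point is guaranteeing that the limiting vectors do not collapse to a lower-dimensional span, which is where one needs either orthonormalization of the bases before taking limits or an explicit lower bound on the smallest singular value of the $k$-frames. I would go with the determinant argument as the cleaner writeup, since it sidesteps the subsequence and independence-preservation issues entirely.
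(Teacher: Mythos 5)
Your proof is correct, but your primary writeup takes a genuinely different route from the paper. The paper proves the statement exactly by your ``alternative route'': it picks an \emph{orthonormal} basis $v^t_1,\dots,v^t_k$ of $\nul(A^t)$ for each $t$, uses boundedness to extract a convergent subsequence of the matrices $V^t$ whose columns are these bases, and notes that orthonormality passes to the limit by continuity of the inner product, so the limit columns are still linearly independent and satisfy $AV=0$. That is precisely the orthonormalization device you flag as the way to prevent the limiting frame from collapsing, so your worry there is resolved exactly as you anticipated. Your main argument --- rank--nullity plus lower semicontinuity of rank via the continuity of minors --- is a clean, subsequence-free alternative: it reduces everything to the open condition ``some $(m-k+1)\times(m-k+1)$ minor is nonzero,'' which cannot hold for $A$ without eventually holding for $A^t$. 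What the determinant approach buys is the absence of any compactness or subsequence bookkeeping; what the paper's approach buys is an explicit $k$-dimensional limiting subspace inside $\nul(A)$ (a limit of the nullspaces themselves), which is closer in spirit to how the proposition is used later (Proposition 5.6, where the nullspaces are the lineality spaces $L_{B_t}$). One cosmetic point common to both: the statement writes $A^t\in\R^{n\times m}$, so the nullspace lives in $\R^m$ and rank--nullity reads $\operatorname{rank}+\operatorname{nullity}=m$, which is how you use it; just keep the domain dimension consistent throughout (the paper's own proof is slightly loose on this, placing the $v^t_i$ in $\R^n$).
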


\begin{proof} If $k=0$, then the result is trivial. So assume that $k>0$. For each value of $t$, there exists orthonormal vectors $\{v^t_1, v^t_2, \dots, v^t_k\}$ that span the $\nul(A^t)$. Let $V^t\in \R^{n\times k}$ be the matrix with $v^t_i$ as the $i$-th column. As each $v^t_i$ is bounded in $\R^n$, $V^t$ is bounded in $\R^{n\times k}$. Hence, we may extract a convergent subsequence converging to a matrix $V$. By continuity of the inner product of vectors, the columns of $V$ are orthornormal and $AV=0$. Hence, $\dim(\nul(A))\geq k$.
\end{proof}

\begin{prop}\label{prop:PolytopeBounded} Suppose that $\{P_t\}$ is a sequence of polyhedra defined by $P_t = \{x\in\R^n:A^t\cdot x\leq b^t\}$. If $P_t\to P$, where $P$ is a polytope, and $P\cap P_t\neq\emptyset$ for each $t$, then there exists $M\in\R$ such that $P\subseteq [-M,M]^n$ and the sequence $\{P_t\}$ is eventually contained in $[-M,M]^n$. Consequently, the polyhedra in the sequence eventually become polytopes.
\end{prop}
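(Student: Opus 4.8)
The plan is to show first that $P$ being a polytope forces, via Proposition~\ref{prop:NullConvergence}, every $P_t$ (for large $t$) to have a bounded recession cone, and then to upgrade this to a uniform bounding box. First I would recall that a polyhedron $Q=\{x:Cx\le d\}$ is a polytope if and only if $\rec(Q)=\{y:Cy\le 0\}=\{0\}$. Apply this to $P$: we have $\{y:Ay\le 0\}=\{0\}$. The natural move is to pass to the matrices $\widehat A^t$ obtained from $A^t$ by stacking $A^t$ on top of $-A^t$ (so that $\widehat A^t x\le 0$ encodes $A^t x=0$); then $\nul(A^t)=\{y:\widehat A^t y\le 0, \widehat A^t(-y)\le 0\}$, and more to the point I want to argue about the cones $\{y:A^t y\le 0\}$ directly. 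Since $\{y:Ay\le 0\}=\{0\}$ and $A^t\to A$, I claim that for all large $t$, $\{y:A^t y\le 0\}=\{0\}$ as well: otherwise, along a subsequence there are unit vectors $y^t$ with $A^t y^t\le 0$, and extracting a convergent subsequence $y^t\to y^*$ (a unit vector, hence nonzero) gives $Ay^*\le 0$ by continuity, contradicting $\rec(P)=\{0\}$. Hence for $t$ large enough $\rec(P_t)=\{0\}$, so each such $P_t$ is a polytope — this already gives the last sentence of the statement.

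Next I would produce the uniform box. Fix $M_0$ with $P\subseteq[-M_0,M_0]^n$ (possible since $P$ is a polytope, hence bounded). Suppose for contradiction that no common bound works, i.e. there is a subsequence $t_j$ and points $z^{t_j}\in P_{t_j}$ with $\|z^{t_j}\|\to\infty$. Using the hypothesis $P\cap P_{t_j}\ne\emptyset$, pick $x^{t_j}\in P\cap P_{t_j}$; these lie in the compact box $[-M_0,M_0]^n$, so after passing to a further subsequence $x^{t_j}\to x^*\in P$. Now consider the directions $u^{t_j}:=(z^{t_j}-x^{t_j})/\|z^{t_j}-x^{t_j}\|$; these are unit vectors, so along a further subsequence $u^{t_j}\to u^*$ with $\|u^*\|=1$. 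Since $P_{t_j}$ is convex and contains both $x^{t_j}$ and $z^{t_j}$, for any fixed $\lambda>0$ and all large $j$ the point $x^{t_j}+\lambda u^{t_j}$ lies on the segment $[x^{t_j},z^{t_j}]\subseteq P_{t_j}$ (because $\|z^{t_j}-x^{t_j}\|\to\infty\ge\lambda$), i.e. $A^{t_j}(x^{t_j}+\lambda u^{t_j})\le b^{t_j}$. Letting $j\to\infty$ and using $A^{t_j}\to A$, $b^{t_j}\to b$, $x^{t_j}\to x^*$, $u^{t_j}\to u^*$ yields $A(x^*+\lambda u^*)\le b$, so $x^*+\lambda u^*\in P$ for every $\lambda>0$; hence $u^*\in\rec(P)=\{0\}$, contradicting $\|u^*\|=1$. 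Therefore such an $M$ exists, say $M\ge M_0$, with $P\subseteq[-M,M]^n$ and all but finitely many $P_t$ contained in $[-M,M]^n$.

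Finally, combining the two parts: for $t$ large, $P_t$ is contained in the bounded box $[-M,M]^n$, so $P_t$ is a bounded polyhedron and hence a polytope, which gives the concluding assertion of the Proposition. The main obstacle is the second contradiction argument — one has to be careful that the witnessing unbounded points $z^{t_j}$ need not themselves converge in direction, which is why I anchor the argument at the points $x^{t_j}\in P\cap P_{t_j}$ (whose boundedness is exactly what the hypothesis $P\cap P_t\ne\emptyset$ buys us) and take the limit of the normalized displacement directions rather than of the $z^{t_j}$ themselves; the convexity of each $P_{t_j}$ is what lets a limiting recession direction of $P$ emerge, and the polytope hypothesis on $P$ is what rules it out.
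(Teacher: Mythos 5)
Your argument is correct, and it reaches the conclusion by a mechanism that is related to but genuinely different from the paper's. The paper also anchors at a point of $P\cap P_{t_k}$ and moves along the segment toward a putative escaping point of $P_{t_k}$, but it stops at \emph{bounded} distance from $P$: it produces points $y_k\in P_{t_k}$ lying in the compact shell $Y=\{x:\eps/2\le d(P,x)\le\eps\}$, extracts a convergent subsequence $y_{k_j}\to y\notin P$, and derives the contradiction $a_{i^*}\cdot y>b_{i^*}\ge\lim_j a^{t_{k_j}}_{i^*}\cdot y_{k_j}=a_{i^*}\cdot y$ from a facet inequality of $P$ violated by $y$. That version actually proves the slightly stronger statement that $P_t\subseteq P+\eps B(0,1)$ eventually, for every $\eps>0$. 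You instead normalize the displacement $z^{t_j}-x^{t_j}$ and let the segment grow without bound, producing a unit vector in $\rec(P)=\{0\}$; this yields only the uniform bounding box, which is all the proposition asserts, and it isolates cleanly that the polytope hypothesis enters precisely through the triviality of $\rec(P)$. Both proofs use the hypothesis $P\cap P_t\neq\emptyset$ in the same essential way, to supply a bounded anchor $x^{t}\in P\cap P_t$. Two minor remarks: your opening paragraph (showing $\rec(P_t)=\{0\}$ for large $t$ by a separate compactness argument) is subsumed by the bounding-box conclusion, as you yourself note at the end, and Proposition~\ref{prop:NullConvergence}, announced in your plan, is never actually used; neither point is a defect, but both could be trimmed.
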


\begin{proof} It suffices to show that for every $\epsilon>0$, there exists a sufficiently large $t$, $P_t\subseteq P+\epsilon B(0,1)$, where $B(0,1)$ is the unit ball.

Assume to the contrary that this is not the case. This indicates that there exists a subsequence of points $\{x_{t_k}\}_{k=1}^\infty$ such that $x_{t_k}\in P_{t_k}\setminus (P+\epsilon B(0,1))$. For each $k \in \N$, there exists some $z_k\in P_{t_k}\cap P$ since $P_{t_k}\cap P\neq \emptyset$. Since the distance function is continuous, there exists some point $y_k\in P_{t_k}\setminus P$ on the line segment $[x_k,z_k]$ such that $y_k\in Y:=\{x\in \R^n: \epsilon/2 \leq d(P, x) \leq \epsilon\}$. Consider the sequence $\{y_k\}_{k=1}^\infty$. Note $Y$ is compact since $P$ is a polytope. Therefore, there exists a subsequence $\{y_{k_j}\}$ of $\{y_k\}$ such that $y_{k_j}\to y$ in $Y$. Let $A^t \to A$ and $b^t \to b$. Since $y\not\in P$, there exists some $i^*\in \{1, \ldots, m\}$ such that $a_{i^*}\cdot y > b_{i^*}$ where $a_{i^*}$ is the row of $A$ indexed by $i^*$ and $b_{i^*}$ is the $i^*$-th component of $b$. However, this implies that
\begin{equation*}
a_{i^*}\cdot y > b_{i^*} = \lim_{j\to\infty} b^{t_{k_j}}_{i^*} \geq \lim_{j\to\infty} a^{t_{k_j}}_{i^*}\cdot y_{k_j} = a_{i^*}\cdot y,
\end{equation*} where $a^{t_{k_j}}_{i^*}$ is the row of $A^{t_{k_j}}$ indexed by $i^*$ and $b^{t_{k_j}}_{i^*}$ is the $i^*$-th component of $b^{t_{k_j}}$. Thus, we reach a contradiction.\end{proof}


\begin{prop}\label{prop:full-polytope}
Suppose that $\{P_t\}$ is a sequence of polyhedra defined by $P_t = \{x\in\R^n:A^t\cdot x\leq b^t\}$. If $P_t\to P$ and $x \in \intr(P)$, then there exists $t_0 \in \N$ such that $x \in \intr(P_t)$ for all $t \geq t_0$.
\end{prop}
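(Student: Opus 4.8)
The plan is to use the fact that $x \in \intr(P)$ is characterized by strict inequalities in the defining system, and then invoke the convergence $A^t \to A$, $b^t \to b$ to transfer strict inequalities to the approximating systems, being careful to handle rows whose norm tends to zero.

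First I would note that $x \in \intr(P)$ means $a_i \cdot x < b_i$ for every row index $i \in \{1,\dots,m\}$ for which the inequality $a_i \cdot y \le b_i$ is not identically redundant; more robustly, since $x\in\intr(P)$ there is a radius $\delta > 0$ with $B(x,\delta) \subseteq P$, hence $a_i \cdot x + \delta\|a_i\| \le b_i$ for every $i$. This inequality is trivially true when $a_i = 0$ (it forces $0 \le b_i$, which holds since $x \in P \neq \emptyset$), and it gives a strict slack $a_i \cdot x < b_i$ precisely at the rows with $a_i \neq 0$. The point of working with $\delta$ rather than the naive strict inequalities is that it gives a uniform gap.

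Next, fix any $i$. If $a_i \neq 0$, then $a^t_i \to a_i$ and $b^t_i \to b_i$ imply $a^t_i \cdot x \to a_i \cdot x < b_i = \lim b^t_i$, so for all sufficiently large $t$ we have $a^t_i \cdot x < b^t_i$, in fact with slack bounded below by a fixed fraction of $b_i - a_i\cdot x$. If $a_i = 0$, then $a_i \cdot x + \delta\|a_i\| \le b_i$ reads $0 \le b_i$; here I would have to argue that the corresponding approximate row, even though it may be nonzero, does not cut $x$ out for large $t$: indeed $a^t_i \cdot x \to 0$ and $b^t_i \to b_i$, so if $b_i > 0$ we again get $a^t_i\cdot x < b^t_i$ eventually, while if $b_i = 0$ the situation $a_i = 0, b_i = 0$ would mean the limiting inequality is $0 \le 0$, which is satisfied by all of $\R^n$ and in particular does not prevent $x$ from being interior — but one still needs $a^t_i\cdot x < b^t_i$, which can fail. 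This is the one delicate point, and I would resolve it by the observation that such a degenerate row $0 \le 0$ contributes nothing to $P$, so WLOG (or by a separate remark) we may assume the limiting system has no such rows, or alternatively we strengthen the hypothesis implicitly used elsewhere that $P_t \to P$ is taken with a fixed nondegenerate description. Taking the maximum of the finitely many thresholds $t_0^{(i)}$ over $i \in \{1,\dots,m\}$ yields a single $t_0$ such that $a^t_i \cdot x < b^t_i$ for all $i$ and all $t \ge t_0$, i.e. $x \in \intr(P_t)$.

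The main obstacle, as indicated, is the bookkeeping around rows $(a_i,b_i)$ that degenerate in the limit to $(0, b_i)$ with $b_i \le 0$: when $b_i < 0$ such a row makes $P$ empty (contradicting nothing stated, but then $\intr(P) = \emptyset$ and the claim is vacuous), and when $b_i = 0$ the limiting row is vacuous and should simply be discarded from the description. I expect the clean way to present this is: assume $\intr(P) \neq \emptyset$ (otherwise nothing to prove), pick $\delta$ with $B(x,\delta)\subseteq P$, and for each $i$ use $a_i\cdot x \le b_i - \delta\|a_i\|$ together with the convergence to produce the per-row threshold, handling the finitely many rows with $a_i = 0$ by noting $b_i \ge \|a_i\|\delta + a_i\cdot x$ is vacuous and that for those rows $b_i^t \to b_i$ with $b_i \ge 0$; if some such $b_i = 0$ one observes that row $i$ is redundant in $P$ and may be assumed absent. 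Everything else is a finite-$\max$ argument.
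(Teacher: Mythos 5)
Your proof is correct and follows essentially the same route as the paper's: pick a uniform slack $\delta$ with $B(x,\delta)\subseteq P$, pass the strict componentwise inequality through the convergence $A^t\to A$, $b^t\to b$, and take a finite maximum over the rows. The one point where you are actually more careful than the paper is the degenerate-row issue: the paper's proof simply asserts that $x\in\intr(P)$ yields $\delta\mathbf{1}<b-Ax$, which is false precisely when some row satisfies $a_i=0$, $b_i=0$, and in that case the proposition itself can fail (e.g.\ $n=1$, $A^t=[1/t]$, $b^t=[-1/t]$ gives $P_t=(-\infty,-1]$ while $P=\R$). So the caveat you raise is genuine; it is harmless in the paper's applications, where the inequality descriptions come from facet-defining inequalities of maximal $S$-free sets and such vacuous rows do not occur, but as a standalone statement the proposition implicitly assumes the limiting description has no rows of the form $0\cdot y\le 0$, exactly as you suggest.
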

\begin{proof} As $x \in \intr(P)$, there exists $\delta>0$ such that $\delta\mathbf{1} < b - Ax$, where $\mathbf{1}\in \R^m$ is the vector of all ones. Since $A^t \to A$ and $b^t \to b$, we have that $b^t - A^tx \to b - Ax$ and thus there exists $t_0 \in \N$ such that $b^t - A^tx \geq \delta\mathbf{1}$ for all $t \geq t_0$ and so $x \in \intr(P_t)$ for all $t \geq t_0$.\end{proof}

We next build some tools to prove our main result of this section, Theorem~\ref{thm:LimitOfB}, which is about limits of maximal $S$-free sets that possess the covering property. For the rest of this section, we consider an arbitrary polyhedrally-truncated affine lattice $S$. If $B$ be a maximal $S$-free polyhedron given by~\eqref{eq:B-desc}, recall the definition $L_B = \{r \in \R^n: a_i\cdot r = a_j\cdot r, \;\quad \forall i,j \in I\}$.   

\begin{prop}\label{prop:ComplementarySpaces}
Let $B$ be a maximal $S$-free set and assume that $B\cap\conv(S)$ is a full-dimensional polytope. If $B$ has the covering property, then $L_B+\lin(\conv(S)) = \R^n$.
\end{prop}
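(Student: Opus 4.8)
The plan is to argue by contradiction. If the subspace $U := L_B + \lin(\conv(S))$ were proper in $\R^n$, then I will show that $R(S,B)+W_S$ is contained in a bounded ``slab'' around $U$, and hence cannot equal $\R^n$, contradicting the covering property.

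First I would dispose of the trivial case: if $B$ is a halfspace then $L_B=\R^n$ and there is nothing to prove, so assume $|I|\ge 2$; since $B\cap\conv(S)$ is a full-dimensional polytope we have $\intr(B\cap\conv(S))\neq\emptyset$, so Propositions~\ref{prop:basic-facts} and~\ref{prop:L-lin} apply. Next I would observe that $B\cap S$ is finite: writing $S=(b+\Lambda)\cap\conv(S)$, we get $B\cap S=(b+\Lambda)\cap(B\cap\conv(S))$, the intersection of a lattice coset with a bounded polytope. Consequently $R(S,B)=\bigcup_{s\in B\cap S}R(s)$ is a \emph{finite} union of spindles, and by Proposition~\ref{prop:basic-facts}(ii) each spindle satisfies $\rec(R(s))=\lin(R(s))=L_B$.

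The key step is the following containment. Fix any linear complement $L'$ of $L_B$ in $\R^n$. For each $s\in B\cap S$ the set $Q_s:=R(s)\cap L'$ is a polyhedron with recession cone $\rec(R(s))\cap L'=L_B\cap L'=\{0\}$, hence a polytope, and, because $L_B$ is the lineality space of $R(s)$, one has $R(s)=Q_s+L_B$. Let $K:=\conv\bigl(\bigcup_{s\in B\cap S}Q_s\bigr)$, which is a polytope. Then $R(S,B)\subseteq K+L_B$, and since $W_S=\lin(\conv(S))\cap\Lambda\subseteq\lin(\conv(S))$,
\[
R(S,B)+W_S\;\subseteq\;K+L_B+W_S\;\subseteq\;K+L_B+\lin(\conv(S))\;=\;K+U.
\]
Now suppose $U\neq\R^n$ and let $\pi$ denote orthogonal projection onto $U^\perp\neq\{0\}$; then $\pi(K+U)=\pi(K)$ is bounded, so $K+U$ misses every point whose $U^\perp$-component lies outside the bounded set $\pi(K)$. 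Hence $K+U\subsetneq\R^n$, contradicting $R(S,B)+W_S=\R^n$. Therefore $U=\R^n$, i.e.\ $L_B+\lin(\conv(S))=\R^n$.

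I do not expect a serious obstacle; the only point requiring care is the decomposition $R(s)=Q_s+L_B$ with $Q_s$ bounded, which is exactly where Proposition~\ref{prop:basic-facts}(ii)'s identity $\rec(R(s))=\lin(R(s))=L_B$ is used (together with the standard fact that a polyhedron is the Minkowski sum of its lineality space with its section by a complementary subspace). As an alternative to building $K$ by hand, one can read the same ``bounded set plus $U$'' containment directly off the decomposition $R(S,B)+W_S=(R(S,B)\cap L')+(L_2\cap\Lambda)+L_B$ already established in the proof of Theorem~\ref{thm:finite-intersection}, since there $L_2\subseteq\lin(\conv(S))$ and $R(S,B)\cap L'$ is a finite union of polytopes. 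Finally, note that full-dimensionality of $B\cap\conv(S)$ is used only to ensure $\intr(B\cap\conv(S))\neq\emptyset$ (so that Propositions~\ref{prop:basic-facts} and~\ref{prop:L-lin} may be invoked); its boundedness is what makes $B\cap S$ finite and $K$ a polytope.
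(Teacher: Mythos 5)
Your proof is correct and follows essentially the same route as the paper: argue by contradiction, use the fact that $L_B$ is the common recession cone/lineality space of the finitely many spindles to write $R(S,B)$ as a bounded set plus $L_B$, and conclude that $R(S,B)+W_S$ lies in a bounded set plus the proper subspace $L_B+\lin(\conv(S))$, hence cannot be $\R^n$. The only cosmetic difference is that the paper works inside a complement $M$ of $L_B$ chosen to contain $\lin(\conv(S))$ (which is where Proposition~\ref{prop:L-lin} is invoked), whereas you project onto $U^\perp$ and thereby do not actually need $L_B\cap\lin(\conv(S))=\{0\}$.
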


\begin{proof}
Assume to the contrary that $L_B+\lin(\conv(S))\neq\R^n$. We claim that $R(S,B)+W_S\neq \mathbb{R}^n$, yielding our contradiction.

Since $L_B+\lin(\conv(S))\neq\R^n$, we may choose a subspace $M$ of $\R^n$ such that $\lin(\conv(S))\subsetneq M$ and $L_B+M = \R^n$. Furthermore, as a consequence of Proposition \ref{prop:L-lin}, we may choose $M$ so that $M\cap L_B = \{0\}$. Define $M(S,B) := R(S,B)\cap M$. Note that $M(S,B)$ is compact as the recession cone of every spindle in $R(S,B)$ is $L_B$. Also, $R(S,B) = L_B+M(S,B)$. As $M(S,B)$ is compact, $\lin(\conv(S))+M(S,B)\subsetneq M$. Therefore, using Proposition~\ref{prop:W_S-characterization},
\begin{equation*}
R(S,B) + W_S = L_B+M(S,B)+(\lin(\conv(S))\cap \Lambda) \subsetneq L_B+M =\R^n.
\end{equation*}\end{proof}

\begin{prop}\label{prop:CoveringEquivalence} Suppose $B$ is a maximal $S$-free set such that $L_B+\lin(\conv(S)) = \R^n$ and $L_B \cap \lin(\conv(S)) = \{0\}$. Define $M:=R(S,B)\cap \lin(\conv(S))$. Then the covering property $R(S,B)+W_S = \R^n$ is equivalent to $M+W_S=\lin(\conv(S))$.
\end{prop}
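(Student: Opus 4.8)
The plan is to exploit the direct-sum decomposition $\R^n = L_B \oplus \lin(\conv(S))$ supplied by the two hypotheses, together with the fact that $R(S,B)$ is invariant under translation by $L_B$. The latter holds because $L_B$ is the lineality space of every spindle $R(s,B)$ (this is immediate from the homogeneous inequality description of $R(s,B)$, and is also recorded in Proposition~\ref{prop:basic-facts}(ii)), so each $R(s,B) = R(s,B) + L_B$ and hence $R(S,B) = R(S,B) + L_B$ as well. If $B$ is a halfspace the statement is trivial: then $L_B = \R^n$, so the hypothesis $L_B \cap \lin(\conv(S)) = \{0\}$ forces $\lin(\conv(S)) = \{0\}$, whence $W_S = \{0\}$, $R(S,B) = \R^n$, $M = \{0\}$, and both sides of the claimed equivalence hold. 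So I may assume $\intr(B\cap\conv(S)) \neq \emptyset$.

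The first real step is to establish the structural identity $R(S,B) = M + L_B$, where $M = R(S,B)\cap\lin(\conv(S))$. The inclusion $\supseteq$ follows from $M \subseteq R(S,B)$ and $R(S,B) + L_B = R(S,B)$. For $\subseteq$, take $r \in R(S,B)$ and write $r = v + l$ with $v \in \lin(\conv(S))$ and $l \in L_B$, using the direct sum; then $v = r - l \in R(S,B)$ by the $L_B$-invariance, so $v \in R(S,B)\cap\lin(\conv(S)) = M$, giving $r \in M + L_B$. Next, by Proposition~\ref{prop:W_S-characterization} we have $W_S = \lin(\conv(S))\cap\Lambda \subseteq \lin(\conv(S))$, so $M + W_S \subseteq \lin(\conv(S))$ and
$$R(S,B) + W_S = M + L_B + W_S = (M + W_S) + L_B.$$

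The proof then reduces to an elementary linear-algebra fact: if $U,L$ are subspaces with $U \oplus L = \R^n$ and $N \subseteq U$, then $N + L = \R^n$ if and only if $N = U$. The direction $(\Leftarrow)$ is clear; for $(\Rightarrow)$, any $u \in U$ can be written $u = n + l$ with $n \in N \subseteq U$ and $l \in L$, so $l = u - n \in U \cap L = \{0\}$ and thus $u = n \in N$. Applying this with $U = \lin(\conv(S))$, $L = L_B$, and $N = M + W_S$ yields precisely the equivalence of $R(S,B) + W_S = \R^n$ with $M + W_S = \lin(\conv(S))$. There is no serious obstacle here; the only points needing a little care are the $L_B$-invariance of $R(S,B)$ and the containment $W_S \subseteq \lin(\conv(S))$, both of which come from results already proved (Propositions~\ref{prop:basic-facts}(ii) and~\ref{prop:W_S-characterization}), after which the argument is just the bookkeeping of the direct-sum decomposition.
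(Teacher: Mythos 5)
Your proof is correct and follows essentially the same route as the paper's: both rest on the direct-sum decomposition $\R^n = L_B \oplus \lin(\conv(S))$, the fact that $L_B$ is the lineality space of every spindle (Proposition~\ref{prop:basic-facts}(ii)), and the containment $W_S \subseteq \lin(\conv(S))$ from Proposition~\ref{prop:W_S-characterization}. Your packaging via the identity $R(S,B) = M + L_B$ merely unifies the two directions that the paper argues separately, which is a mild stylistic improvement rather than a different argument.
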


\begin{proof}
Suppose $R(S,B)+W_S = \R^n$. Intersecting both sides by $\lin(\conv(S))$, we see that $(R(S,B)+W_S)\cap \lin(\conv(S)) = \lin(\conv(S))$. It is sufficient to show that $(R(S,B)+W_S)\cap \lin(\conv(S)) = M+W_S$. Take $r+w\in (R(S,B)+W_S)\cap \lin(\conv(S))$ for $r\in R(S,B)$ and $w\in W_S\subseteq \lin(\conv(S))$ by Proposition~\ref{prop:W_S-characterization}. As $r+w\in \lin(\conv(S))$, $r\in \lin(\conv(S))$. Thus, $r\in R(S,B)\cap \lin(\conv(S))$. Hence, $r\in M$ and $(R(S,B)+W_S)\cap \lin(\conv(S)) \subseteq M+W_S$. The other inclusion follows immediately from $W_S\subseteq \lin(\conv(S))$.

Now suppose that $M+W_S = \lin(\conv(S))$ and take $x\in \mathbb{R}^n$. Since $L_B$ and $\lin(\conv(S))$ are complementary spaces, there exists $l\in L_B$ and $s\in \lin(\conv(S))$ such that $x = l+s$. By our assumption, there is an $m\in M$ and $w\in W_S$ so that $x = l+s = l+(m+w) = (l+m)+w$. Since $m\in R(S,B)$, $m$ is contained in some spindle belonging to $R(S,B)$. However, $L_B$ is the lineality space of each spindle. Hence, $l+m\in R(S,B)$. This shows that $\R^n\subseteq R(S,B)+W_S$. The other inclusion follows as $\R^n$ is the ambient space.
\end{proof}

\begin{prop}\label{prop:LimitOfL} Suppose that $\{B_t\}_{t=1}^\infty$ is a sequence of maximal $S$-free sets such that $L_{B_t}+\lin(\conv(S)) = \R^n$, where $L_{B_t} = \{r : a_i^t\cdot r = a_j^t\cdot r, \;\quad \forall i,j \in I\}$. If $B_t\to B$, and $B \cap \conv(S)$ is a full dimensional polytope, then $L_B+\lin(\conv(S)) = \R^n$, where $L_B = \{r : a_i\cdot r = a_j\cdot r, \;\quad \forall i,j \in I\}$.
\end{prop}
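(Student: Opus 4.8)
The plan is to show that the dimension of $L_B$ is at least as large as the (common) dimension of $L_{B_t}$, and then combine this with the hypothesis $L_B\cap\lin(\conv(S))=\{0\}$—which is forced by Proposition~\ref{prop:L-lin} once $B\cap\conv(S)$ is a full-dimensional polytope—to conclude by a dimension count that $L_B+\lin(\conv(S))=\R^n$. The key observation is that $L_B$ is exactly the nullspace of the linear map $r\mapsto \big((a_i-a_j)\cdot r\big)_{i,j\in I}$, i.e.\ the nullspace of the matrix $N$ whose rows are the differences $a_i-a_j$ over all pairs $i,j\in I$ (equivalently, one can fix a reference index $i_0$ and let the rows be $a_i-a_{i_0}$, $i\in I$). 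Since $B_t\to B$ means $a^t_i\to a_i$ for each $i\in I$ (this uses that the facet normals of the $B_t$ are indexed by the same set $I$, which is implicit in the setup of this section), the corresponding matrices $N_t$ converge entrywise to $N$.

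Here is the sequence of steps. First, I would record that $\dim\nul(N_t)=\dim L_{B_t}$ and that, by the hypothesis $L_{B_t}+\lin(\conv(S))=\R^n$ together with $L_{B_t}\cap\lin(\conv(S))=\{0\}$ (Proposition~\ref{prop:L-lin} applied to each $B_t$—here one needs $B_t\cap\conv(S)$ to be a full-dimensional polytope; see the obstacle paragraph), the spaces $L_{B_t}$ and $\lin(\conv(S))$ are complementary, so $\dim L_{B_t}=n-\dim\lin(\conv(S))=:k$ is the \emph{same} for all $t$. Second, I would invoke Proposition~\ref{prop:NullConvergence} with the convergent sequence $N_t\to N$ and the fixed nullspace dimension $k$ to conclude $\dim L_B=\dim\nul(N)\ge k$. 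Third, since $B\cap\conv(S)$ is a full-dimensional polytope by hypothesis, Proposition~\ref{prop:L-lin} gives $L_B\cap\lin(\conv(S))=\{0\}$, and hence $\dim(L_B+\lin(\conv(S)))=\dim L_B+\dim\lin(\conv(S))\ge k+\dim\lin(\conv(S))=n$, forcing $L_B+\lin(\conv(S))=\R^n$.

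The main obstacle is the bookkeeping around the hypothesis of Proposition~\ref{prop:L-lin}: that proposition requires $\intr(B_t\cap\conv(S))\neq\emptyset$ and, to get $L_{B_t}\cap\lin(\conv(S))=\{0\}$, that $B_t\cap\conv(S)$ be a polytope. For the limit set $B$ these are given in the statement; for the sequence members $B_t$ one must argue they hold eventually. I would use Proposition~\ref{prop:full-polytope} (the $B_t$ contain a fixed interior point of $B$, e.g.\ a point in $\intr(B\cap\conv(S))$, eventually in their interiors, so $\intr(B_t\cap\conv(S))\neq\emptyset$ for large $t$) together with Proposition~\ref{prop:PolytopeBounded} applied to the sequence $B_t\cap\conv(S)\to B\cap\conv(S)$ (a polytope, with nonempty intersections with the limit) to conclude that $B_t\cap\conv(S)$ is eventually a polytope; then the complementarity argument in step one applies for all large $t$, which is all that is needed since Proposition~\ref{prop:NullConvergence} only cares about the tail. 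A secondary point to state carefully is that the index set $I$ is the same for all $B_t$ and $B$, so that ``$B_t\to B$'' really does give $a^t_i\to a_i$ coordinatewise and hence $N_t\to N$; this should be part of the standing conventions of the section but is worth flagging explicitly.
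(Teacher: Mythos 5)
Your proposal is correct and follows essentially the same route as the paper: reduce to showing $\dim L_B \ge n-\dim\lin(\conv(S))$, use Propositions~\ref{prop:PolytopeBounded} and~\ref{prop:full-polytope} to get that $B_t\cap\conv(S)$ is eventually a full-dimensional polytope so that Proposition~\ref{prop:L-lin} gives $L_{B_t}\cap\lin(\conv(S))=\{0\}$ and hence $\dim L_{B_t}=n-k$ for all large $t$, and then apply Proposition~\ref{prop:NullConvergence} to the matrices with rows $a^t_i-a^t_j$ converging to the matrix with rows $a_i-a_j$. The only cosmetic difference is your optional use of a fixed reference index $i_0$ instead of all pairs, which changes nothing.
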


\begin{proof} Suppose $\dim(\lin(\conv(S)))= k$. As $L_B\cap \lin(\conv(S)) = \{0\}$ from Proposition~\ref{prop:L-lin}, it is sufficient to show that $\dim(L_B) \geq  n-k$. Since $B_t \to B$, we have $B_t \cap \conv(S) \to B \cap \conv(S)$, and since $B\cap \conv(S)$ is a full dimensional polytope, by Propositions~\ref{prop:PolytopeBounded} and~\ref{prop:full-polytope} we eventually have that $B_t \cap \conv(S)$ is a polytope. Thus, $L_{B_t} \cap \lin(\conv(S)) = \{0\}$ by Proposition~\ref{prop:L-lin}. Since $L_{B_t}+\lin(\conv(S)) = \R^n$ for each $t$, $\dim(L_{B_t}) = n-k$. For each $i\neq j\in I$, define the matrix $A^t$ to have rows $a^t_i-a^t_j$ and $A$ to have the rows $a_i - a_j$. As $L_{B_t}=\nul(A^t)$, Proposition \ref{prop:NullConvergence} implies that $\dim(\nul(A)) \geq n-k$. Observing that $L_B=\nul (A)$ yields the desired result.
\end{proof}

\begin{theorem}\label{thm:LimitOfB}
Suppose $\{B_t\}_{t=1}^\infty$ is a sequence of maximal S-free sets possessing the covering property. If $B_t\to B$, where $B$ is a maximal S-free set and $B\cap \conv(S)$ is a polytope, then $B$ also possesses the covering property.
\end{theorem}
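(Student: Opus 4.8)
The plan is to reduce the covering property to a statement about a fixed-dimensional compact region inside $\lin(\conv(S))$ and then pass to the limit there. First I would dispose of the degenerate case: if $B$ is a halfspace the lifting region is all of $\R^n$ and there is nothing to prove, so assume $\intr(B\cap\conv(S))\neq\emptyset$; since $B\cap\conv(S)$ is a polytope by hypothesis, it is in fact a full-dimensional polytope (after noting $B$ full-dimensional), which will let me invoke Propositions~\ref{prop:L-lin}, \ref{prop:ComplementarySpaces}, \ref{prop:CoveringEquivalence}, \ref{prop:LimitOfL}. The key structural observation is that by Proposition~\ref{prop:ComplementarySpaces}, each $B_t$ having the covering property forces $L_{B_t}+\lin(\conv(S))=\R^n$; combined with Proposition~\ref{prop:L-lin} (which gives $L_{B_t}\cap\lin(\conv(S))=\{0\}$ once $B_t\cap\conv(S)$ is a polytope — true eventually by Propositions~\ref{prop:PolytopeBounded} and~\ref{prop:full-polytope}), Proposition~\ref{prop:LimitOfL} then yields $L_B+\lin(\conv(S))=\R^n$ and $L_B\cap\lin(\conv(S))=\{0\}$ for the limit. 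So both $B$ and (eventually) the $B_t$ satisfy the hypotheses of Proposition~\ref{prop:CoveringEquivalence}.

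By Proposition~\ref{prop:CoveringEquivalence}, it then suffices to prove $M+W_S=\lin(\conv(S))$, where $M:=R(S,B)\cap\lin(\conv(S))$, knowing that $M_t+W_S=\lin(\conv(S))$ for each large $t$, where $M_t:=R(S_t\,? )$ — more precisely $M_t:=R(S,B_t)\cap\lin(\conv(S))$. Here $W_S$ is a fixed lattice in the fixed subspace $\lin(\conv(S))$, and each $M_t$, $M$ is a finite union of polytopes (since $R(S,\cdot)$ is a finite union of polyhedra each with lineality $L_B$, and we intersect with the complementary space $\lin(\conv(S))$). The essential point I would establish is a one-sided limit/closure statement for these regions: for the spindle decomposition $R(S,B)=\bigcup_{s\in B\cap S}R(s,B)$, the set $B\cap S$ is finite (it equals $(b+\Lambda)\cap\conv(S)\cap B$, a bounded set of lattice points), and as $B_t\to B$, for each $s\in B\cap S$ the spindle $R(s,B_t)$ "converges" to $R(s,B)$ in the sense that any point of $R(s,B)$ is a limit of points in $R(s,B_t)$ and that $B_t\cap S\to B\cap S$ (again finiteness and convergence of the defining inequalities make the set of lattice points in $B_t$ stabilize, or at least eventually contain $B\cap S$ and be contained in a fixed finite superset). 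Passing to this finite union, the $M_t$ eventually cover each compact piece that $M$ covers.

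Concretely, the argument I would run is: take any $x\in\lin(\conv(S))$; I must find $w\in W_S$ with $x-w\in M$, i.e. $x-w\in R(s,B)$ for some $s\in B\cap S$. For each $t$ large, $x=m_t+w_t$ with $m_t\in M_t$, $w_t\in W_S$, $m_t\in R(s_t,B_t)$ for some $s_t\in B_t\cap S$. Using that $\{w_t\}$ can be taken bounded (only finitely many lattice translates of the bounded region $M_t$, uniformly bounded by Theorem~\ref{thm:finite-intersection}-type reasoning applied to the eventually-stable family, or directly because $x$ is fixed and $M_t$ lies in a fixed compact set by Proposition~\ref{prop:PolytopeBounded} applied to the uniformly bounded polytopes $B_t\cap\conv(S)$), pass to a subsequence along which $w_t=w$ is constant and $s_t=s$ is constant (both range over finite sets) and $m_t\to m$ with $m=x-w$. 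Then $m\in\lin(\conv(S))$, and $m\in R(s,B)$ because $R(s,B)$ is closed and $m_t\in R(s,B_t)$ with the inequalities defining $R(s,B_t)$ converging coefficientwise to those defining $R(s,B)$; hence $m\in M$ and $x=m+w\in M+W_S$. This gives $\lin(\conv(S))\subseteq M+W_S$; the reverse inclusion is immediate since $W_S\subseteq\lin(\conv(S))$ and $M\subseteq\lin(\conv(S))$. By Proposition~\ref{prop:CoveringEquivalence}, $B$ has the covering property.

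The main obstacle I anticipate is the bookkeeping in the limiting step: verifying that the index sets $B_t\cap S$ stabilize (or are eventually trapped between fixed finite sets), that the spindle inequalities for $R(s,B_t)$ genuinely converge to those for $R(s,B)$ uniformly over the finitely many relevant $s$, and that the translation vectors $w_t$ stay in a fixed finite subset of $W_S$ — the last needs the uniform boundedness of the $M_t$, which I would get from Proposition~\ref{prop:PolytopeBounded} (the $B_t\cap\conv(S)$ are eventually contained in a common box $[-M,M]^n$) together with the fact that each spindle $R(s,B_t)$ intersected with $\lin(\conv(S))$ is controlled by the geometry of $B_t$. Everything else is a straightforward application of the already-established Propositions~\ref{prop:L-lin}, \ref{prop:ComplementarySpaces}, \ref{prop:CoveringEquivalence}, \ref{prop:LimitOfL}, and closedness of polyhedra.
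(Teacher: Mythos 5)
Your proposal is correct and follows essentially the same route as the paper's proof: the same reduction via Propositions~\ref{prop:PolytopeBounded}, \ref{prop:full-polytope}, \ref{prop:ComplementarySpaces}, \ref{prop:LimitOfL}, \ref{prop:L-lin} and~\ref{prop:CoveringEquivalence} to showing $\lin(\conv(S))\subseteq M+W_S$, followed by the same compactness/subsequence argument that stabilizes the anchor point $s_t$ and the translate $w_t$ and then passes to the limit of the spindle slices $R(s,B_t)\cap\lin(\conv(S))$. The only cosmetic difference is the order of stabilization: the paper first fixes $s_t=s$ (finiteness of $U\cap S$) and only then applies Proposition~\ref{prop:PolytopeBounded} to the single convergent sequence of slices to bound $w_t$, rather than bounding all of $M_t$ uniformly at once.
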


\begin{proof}
If $B$ is a half-space, then it is easy to check that $B$ has the covering property. Therefore, consider when $B$ is not a half-space and so $\intr(B\cap \conv(S))\neq \emptyset$ by Proposition~\ref{prop:basic-facts}.

From Proposition \ref{prop:PolytopeBounded} and~\ref{prop:full-polytope} we eventually have that $B_t \cap \conv(S)$ is a full-dimensional polytope. By Proposition~\ref{prop:ComplementarySpaces} we have $L_{B_t}+\lin(\conv(S)) = \R^n$. By Proposition~\ref{prop:LimitOfL}, $L_B+\lin(\conv(S)) = \R^n$. Moreover, since $B\cap \conv(S)$ is a polytope, we have $L_B \cap \lin(\conv(S)) = \{0\}$ by Proposition~\ref{prop:L-lin}. Define $M_t := R(S,B_t)\cap \lin(\conv(S))$ and $M:=R(S,B)\cap\lin(\conv(S))$. From Proposition~\ref{prop:CoveringEquivalence}, it is sufficient to show that $\lin(\conv(S))\subseteq M+W_S$.

Let $x\in\lin(\conv(S))$. Following Proposition \ref{prop:CoveringEquivalence}, for each $t$ there exists a spindle, $R_t(s_t)$, corresponding to $B_t$ such that $x\in D_t(s_t)+w_t$, where $D_t(s_t) =R_t(s_t)\cap\lin(\conv(S))$ and $w_t\in W_S$. We claim that $s_t$ and $w_t$ can be chosen independently of $t$.

\begin{proof}[Proof of claim:]

From Proposition \ref{prop:PolytopeBounded}, there exists a bounded set, $U$, that contains $B\cap \conv(S)$ and $B_t\cap \conv(S)$ for sufficiently large $t$. Consider the tail subsequence $\{B_t\}$ that has the property $B_t\cap \conv(S)\subseteq U$ for all $t$. As $U$ is bounded and $S$ is discrete, there is a finite number of points in $U\cap S$. Note that each spindle in $R(S, B_t)$ is anchored by a point in $B_t\cap S \subseteq U$. Therefore, there exists an $s\in S$ and a subsequence of $\{B_t\}$ such that $D_t(s_t)=D_t(s)$, for all $t$. Relabel such a subsequence by $\{B_t\}$.

Since the inner product is a continuous function on $\R^n$, $s \in B_t$ implies $s\in B$. Since $B_t\to B$, for a fixed $s$ it also follows that $D_t(s)\to D(s)$, where $D(s):=R(s)\cap \lin(\conv(S))$. As $L_{B_t}\cap \lin(\conv(S)) = \{0\}$ for each $t$, the set $D_t(s)$ is a polytope for each $t$. Similarly, $D(s)$ is a polytope. Again using Proposition \ref{prop:PolytopeBounded}, there exists a bounded set $V$ such that $D(s)\subseteq V$ and $D_t(s)\subseteq V$ for large $t$ (note that the origin is in each $D_t(s)$ and $D(s)$ and so the hypothesis of the Proposition \ref{prop:PolytopeBounded} is satisfied). In the same manner as above, for large $t$, $w_t\in D_t(s)-x \subseteq V-x$, which is a bounded set. Since $W_S = \lin(\conv(S) \cap \Lambda$ by Proposition~\ref{prop:W_S-characterization}, $W_S$ is discrete and there exists a $w\in W_S$ and a subsequence of $\{B_t\}$ (label this subsequence as $\{B_t\}$) such that $w_t=w$ for all $t$. Hence, $x\in D_t(s)+w$ for all $t$. \end{proof}

Since the inner product is a continuous function on $\R^n \times \R^n$, $x \in D_t(s) + w$ implies $x\in D(s)+w$. As $D(s) \subseteq M$, it follows that $x\in M+W_S$. Hence, $\lin(\conv(S))\subseteq M+W_S$, as desired.\end{proof}

\paragraph{The assumption that $B \cap \conv(S)$ is a polytope.} We end this section with a short justification of the assumption that $B \cap \conv(S)$ is a polytope that was made in Theorem~\ref{thm:LimitOfB}. Although it may seem restrictive at first, if $B\cap\conv(S)$ is not a polytope then one can reduce to that case in the following way. Let $N$ be the linear space spanned by $\rec(B \cap \conv(S))$. By Proposition~\ref{prop:basic-facts}(i), $N$ is a lattice subspace. Let $\bar B, \bar S, \bar \Lambda$ be the projection of $B, S, \Lambda$ onto the orthogonal subspace $N^\perp$ of $N$. By a well-known property of lattices, $\bar\Lambda$ is a lattice. Also, since $\conv(\bar S)$ is the projection of $\conv(S)$ and $S = \conv(S) \cap (b+ \Lambda)$, we have $\bar S = \conv(\bar S) \cap (\bar b + \bar\Lambda)$ where $\bar b$ is the projection of $b$. Hence, $\bar S$ is a polyhedrally-truncated affine lattice in $N^\perp$ and $\bar B$ is a maximal $\bar S$-free set. Moreover, $\bar B \cap \conv(\bar S)$ is a polytope, since $N$ is the linear space spanned by $\rec(B \cap \conv(S))$. Note that $N \subseteq L_B$ by Proposition~\ref{prop:basic-facts}(i), and by Proposition~\ref{prop:basic-facts}(ii), $R(S,B) = R(\bar S, \bar B) + N$. Hence, $B$ has the covering property with respect to $S$ if and only if $\bar B$ has the covering property with respect to $\bar S$. Therefore, to check if $B$ has the covering property with respect to $S$, one can check if $\bar B$ can be obtained as the limit of $\bar S$-free sets with the covering property. 

\section{Application: Iterative application of coproducts and limits}\label{sec:examples}

In this section, we show some examples demonstrating the versatility of the coproduct and limit operations to obtain new and interesting families of bodies with the covering property. We note that the coproduct operation is associative: $(C_1 \copr C_2) \copr C_3 = C_1 \copr (C_2 \copr C_3)$. Thus, we will use notation such as $C_1 \copr C_2 \copr \ldots \copr C_k$ without any ambiguity.

\begin{enumerate}
\item {\bf Crosspolytopes.} Let $a_1, \ldots, a_n\in \R$ and $b_1, \ldots, b_n \in \R$ such that $a_j < 0 < b_j$ for all $j=1, \ldots, n$ and $\sum_{j=1}^n\frac1{b_j- a_j} = 1$. Consider the set of $2n$ points $X = \{(0, \ldots, a_j, \ldots, 0), (0, \ldots, b_j, \ldots, 0) : j=1, \ldots, n\}$ where the nonzero entry is in coordinate $j$. Define $S = \Z^n + (\frac{b_1}{b_1 - a_1}, \ldots, \frac{b_n}{b_n - a_n})$. Then the crosspolytope $\conv(X)$ is a maximal $S$-free set with the covering property. 

This follows from the fact that $\conv(X) = (b_1 - a_1)I_1 \copr (b_2 - a_2)I_2 \copr \ldots \copr (b_n - a_n)I_n$ where $I_j$ is the interval $[\frac{a_j}{b_j - a_j}, \frac{b_j}{b_j - a_j}]$; $I_j$ is therefore a maximal $S_j$-free set with the covering property where $S_j = \Z + \frac{b_j}{b_j - a_j}$. Applying Theorem~\ref{thm:coproduct} shows that the crosspolytope $\conv(X)$ has the covering property.

\item {\bf Simplices.} Let $b_1, \ldots, b_n\in \R$ such that $0 < b_j$ for all $j=1, \ldots, n$ and $\sum_{j=1}^n\frac{1}{b_j}= 1$. Then the simplex $\conv\{0, b_1e^1, b_2e^2, \ldots, b_ne^n\}$, where the $e^i$ denotes the $i$-th unit vector in $\R^n$, is maximal $\Z^n$-free set with the covering property. This follows from taking the limit of the crosspolytopes defined in 1. above as $a_i \to 0$, and applying Theorem~\ref{thm:LimitOfB}. This generalizes the Type 1 triangle from the literature, as well as its higher dimensional analogue $\{0,ne^1, \ldots, ne^n\}$ that has been studied in~\cite{basu2012unique,ccz}, where this special case was shown to have the covering property using completely different arguments.

\item {\bf Further examples.} In three dimensions, one can show that there exist lattice-free sets with the covering property with 2,3,4,5,6, and 8 facets. By taking cylinders over the two-dimensional sets one can obtain 2,3, and 4 facets.  The crosspolytope from 1. above gives 8 facets. The coproduct of a triangle and an interval has 6 facets. Five facets can be obtained by taking the coproduct of a quadrilateral and an interval which gives a crosspolytope with 8 facets, and then taking a limit to reduce the number of facets from 8 to 5: four of the facets degenerate into a single facet. This can be iterated to generate bodies with the covering property in 4, 5, and any number of dimensions. 

We give another example of the kind of results one can prove using coproducts and limit operations. In $\R^k$ ($k\geq 2$), one can explicitly construct a maximal $\Z^k$-free set with $2^{k-1}+1$ facets with the covering property. This can be seen by taking the coproduct of $k$ intervals (to get the crosspolytope with $2^k$ facets) and then taking the limit to reduce $2^{k-1}$ of the facets into a single facet. We believe that the coproduct and limit tools could be useful in attacking questions of the following flavor:

\begin{question} For a fixed $n \in \N$, for which natural numbers in the range $2 \leq k \leq 2^n$ do there exist maximal lattice-free sets in $\R^n$ with $k$ facets that have the covering property?
\end{question}

Moreover, when considering $S$ of the form $\Z^n \times \Z^q_+$ one can construct unbounded polyhedra, by taking the coproduct of a translated cone in $\R^2$ (which has been shown in the literature to be a maximal $S$-free set with the covering property when $S$ is a translated lattice intersected by a halfspace) and quadrilaterals, triangles, and intervals (and iterating to get into arbitrarily high dimensions).

\end{enumerate}

We feel establishing the covering property of the examples above, or even discovering that these bodies have the covering property, would have been challenging without the tools of the coproduct and the limit operation. We mention that the constructions for the crosspolytopes and simplices above were first given in~\cite{averkov-basu-lifting}. The unbounded constructions in 3. above would not have been possible without the results of this current manuscript. Moreover, these operations are constructive and therefore potentially useful beyond purely theoretical questions about the covering property.

\section*{Acknowledgments} We are very grateful to two anonymous referees for insights that helped to considerably simplify and improve the proof of Theorem 3.1 from a previous version. Their suggestions also helped to present all the results in a more concise and effective manner.

\bibliographystyle{plain}
\bibliography{full-bib}

\appendix
\section{Appendix}
\begin{prop}\label{prop:minimality}
Let $S \subseteq \R^n \setminus \{0\}$ be nonempty. Every valid cut-generating pair for $S$ is dominated by a minimal cut-generating pair for $S$.
\end{prop}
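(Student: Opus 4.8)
The plan is to establish this via a Zorn's lemma argument on the partially ordered set of valid cut-generating pairs for $S$ that are dominated by (i.e.\ pointwise $\leq$) a fixed given pair $(\psi,\pi)$. Given a valid cut-generating pair $(\psi,\pi)$, I would consider the collection $\mathcal{P}$ of all valid cut-generating pairs $(\psi',\pi')$ with $(\psi',\pi') \leq (\psi,\pi)$, equipped with the partial order inherited from the coordinatewise dominance order. The goal is to show $\mathcal{P}$ has a minimal element; since any minimal element of $\mathcal{P}$ must also be minimal among all valid cut-generating pairs for $S$ (anything strictly below it in the global order would also lie in $\mathcal{P}$), this gives the claim.

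The key steps are as follows. First I would check that $\mathcal{P}$ is nonempty (it contains $(\psi,\pi)$). Then, to apply Zorn's lemma, I would take an arbitrary chain $\{(\psi_\alpha,\pi_\alpha)\}_{\alpha}$ in $\mathcal{P}$ and define the pointwise infimum: $\bar\psi(r) := \inf_\alpha \psi_\alpha(r)$ and $\bar\pi(p) := \inf_\alpha \pi_\alpha(p)$ for each $r,p \in \R^n$. The crucial things to verify are (a) that $\bar\psi, \bar\pi$ are finite-valued (so that $(\bar\psi,\bar\pi)$ is genuinely a pair of functions $\R^n \to \R$, not taking the value $-\infty$), and (b) that $(\bar\psi,\bar\pi)$ is still a \emph{valid} cut-generating pair for $S$. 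For finiteness, I would argue pointwise: fix $r$, consider the single-column data $R = [r]$, $P$ empty (or symmetrically for $\pi$); validity of each $(\psi_\alpha,\pi_\alpha)$ forces a lower bound on $\psi_\alpha(r)$ whenever the corresponding one-variable feasible set is nonempty, and one handles the remaining directions using that $0\notin S$ (so that validity of the inequality over all $R,P$ prevents the coefficients from being driven to $-\infty$). For validity of $(\bar\psi,\bar\pi)$: fix any $k,\ell$, matrices $R,P$, and a feasible $(s,y) \in X_S(R,P)$; since the chain is totally ordered and $(s,y)$ has finitely many nonzero entries, for each $\alpha$ we have $\sum_i \psi_\alpha(r_i)s_i + \sum_j \pi_\alpha(p_j)y_j \geq 1$, and taking the infimum over $\alpha$ of the left side — which, because $s,y \geq 0$ and only finitely many terms appear, equals $\sum_i \bar\psi(r_i)s_i + \sum_j \bar\pi(p_j)y_j$ by a standard interchange of finite sum and infimum along a chain — is still $\geq 1$. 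Hence $(\bar\psi,\bar\pi) \in \mathcal{P}$ and it is a lower bound for the chain. Zorn's lemma then yields a minimal element of $\mathcal{P}$.

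The main obstacle I anticipate is step (a): ruling out that the infimum of a chain collapses to $-\infty$ at some point. This is where the hypothesis $0 \notin S$ does real work — as cited after~\eqref{psi pi ineq}, $0\notin S$ implies $0$ is not in the closed convex hull of any $X_S(R,P)$, which is exactly what prevents the valid cuts from having unboundedly negative coefficients in any fixed direction. I would make this precise by a separation argument: for a fixed direction $r$, the set of achievable values $\{\psi_\alpha(r)\}$ is bounded below because otherwise one could combine column $r$ with an appropriate feasible point witnessing membership in $X_S(R,P)$ (or, if $r$ itself never participates in a feasible point, observe that the coordinatewise infimum over a chain of functions bounded below on the relevant "useful" directions remains bounded below by a uniform argument). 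A secondary, more routine point is confirming that the interchange $\inf_\alpha \sum (\cdot) = \sum \inf_\alpha(\cdot)$ is legitimate; this holds because the sum is finite and the family is a chain, so the infimum is attained "componentwise in the limit" along the chain. Once these two points are nailed down, the rest is a textbook Zorn's lemma application.
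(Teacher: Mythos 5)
Your overall strategy is the same as the paper's: apply Zorn's lemma to the set of valid pairs dominated by the given $(\psi,\pi)$, taking pointwise infima along chains as lower bounds. The verification that the infimum pair remains valid (finitely many columns, chain structure, nonnegativity of $(s,y)$) is fine as you sketch it. The problem is in step (a), which you correctly identify as the crux but do not actually resolve. Your single-column argument with $R=[r]$ only bounds $\psi_\alpha(r)$ from below when the ray $\R_+ r$ meets $S$, which for a general direction $r$ it need not; and the fallback you offer for the ``remaining directions'' --- a separation argument from $0\notin\overline{\conv}(X_S(R,P))$, or a ``uniform argument'' for directions that ``never participate'' --- is not a proof. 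The separation fact lives in the $(s,y)$-space and guarantees that \emph{some} valid inequality exists; it does not produce a lower bound on the coefficient $\psi_\alpha(r)$ at a fixed $r$ that is uniform over the chain. Moreover, every direction does ``participate'': the issue is that the binding constraints involve $r$ together with other columns, and you never exhibit one.

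The fix is the two-column construction the paper uses. Fix any $s^*\in S$ (here nonemptiness of $S$ is used). Taking $R=[r,\; s^*-r]$ and $s=(1,1)$ gives $Rs=s^*\in S$, so validity forces $\psi_\alpha(r)+\psi_\alpha(s^*-r)\ge 1$ for every pair in the chain; combined with $\psi_\alpha\le\psi$ (all chain elements lie below the original pair), this yields the uniform pointwise lower bound $\psi_\alpha(r)\ge 1-\psi(s^*-r)$, and likewise $\pi_\alpha(r)\ge 1-\pi(s^*-r)$ using $P=[r,\;s^*-r]$, $y=(1,1)$. These bounds are independent of $\alpha$, so the pointwise infima are real-valued and the rest of your argument goes through. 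Your phrase ``combine column $r$ with an appropriate feasible point'' gestures at exactly this, but without writing down the second column $s^*-r$ the finiteness claim is not established, and as stated the plan would not compile into a complete proof.
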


\begin{proof} Fix $s^* \in S$ which is nonempty. Note that any cut-generating pair $(\psi, \pi)$ satisfies $\psi(r) + \psi(s^* - r) \geq 1$ and $\pi(r) + \pi(s^* - r) \geq 1$ for every $r \in \R^n$.

Let $(\bar\psi, \bar\pi)$ be a cut-generating pair. Define two new functions $\phi_1(r) = 1 - \bar\psi(s^* - r)$ and $\phi_2(r) = 1 - \bar\pi(s^* - r)$. Let $\mathcal{I}$ be the set of cut generating functions $(\psi, \pi)$ such that $\psi \leq \bar\psi$ and $\pi \leq \bar \pi$. Note that any element $(\psi, \pi) \in \mathcal{I}$ satisfies $\psi(r) \geq 1 - \psi(s^* - r) \geq 1 - \bar\psi(s^* - r) = \phi_1(r)$ and similarly, $\pi(r) \geq \phi_2(r)$.

We show that every chain in $\mathcal{I}$ has a lower bound in $\mathcal{I}$. Then by Zorn's lemma, $\mathcal{I}$ will contain a minimal element, proving the result.

Consider any chain $\mathcal{C}$ in $\mathcal{I}$. For any element $(\psi, \pi) \in \mathcal{C}$, we know that $\psi \geq \phi_1$ and $\pi \geq \phi_2$. Therefore, $\tilde{\psi}(r) := \inf\{\psi(r) : (\psi, \pi) \in \mathcal{C}\}$ and $\tilde{\pi}(r) := \inf\{\pi(r) : (\psi, \pi) \in \mathcal{C}\}$ are well-defined real-valued functions. It is easy to verify that $(\tilde{\psi}, \tilde{\pi})$ are cut-generating functions, and are therefore in $\mathcal{I}$. This completes the proof that each chain has a lower bound in $\mathcal{I}$.
\end{proof}

\begin{prop}\label{prop:minimality-lifting}
Let $S\subseteq \R^n\setminus \{0\}$ be nonempty and let $\psi$ be a cut-generating function for $S$. Every lifting $\pi$ for $\psi$ is dominated by a minimal lifting.
\end{prop}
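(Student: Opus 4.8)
The plan is to run the same Zorn's lemma argument used for Proposition~\ref{prop:minimality}, but now with $\psi$ held fixed. Fix $s^*\in S$ (possible since $S$ is nonempty), let $\bar\pi$ be the given lifting of $\psi$, and let $\mathcal I$ be the collection of all liftings $\pi$ of $\psi$ with $\pi\le\bar\pi$. Then $\bar\pi\in\mathcal I$, so $\mathcal I\neq\emptyset$, and a minimal element of $\mathcal I$ is automatically a minimal lifting of $\psi$: if $\pi'$ is any lifting with $\pi'\le\pi$ for some $\pi\in\mathcal I$, then $\pi'\le\bar\pi$, so $\pi'\in\mathcal I$ as well. Hence it suffices to produce a minimal element of $\mathcal I$, and for this I will verify the hypothesis of Zorn's lemma.

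First I would record a fixed pointwise lower bound for the elements of $\mathcal I$. Applying the validity of~\eqref{psi pi ineq} to the instance with no continuous variables and two integer columns $p_1=r$, $p_2=s^*-r$, with $y_1=y_2=1$ (feasible because $p_1+p_2=s^*\in S$), we obtain $\pi(r)+\pi(s^*-r)\ge 1$ for every lifting $\pi$ and every $r\in\R^n$. Consequently, for $\pi\in\mathcal I$ we get $\pi(r)\ge 1-\pi(s^*-r)\ge 1-\bar\pi(s^*-r)=:\phi(r)$, a fixed real-valued function independent of $\pi$. Thus every element of $\mathcal I$ is pinched pointwise between $\phi$ and $\bar\pi$.

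Next I would show every chain $\mathcal C\subseteq\mathcal I$ has a lower bound in $\mathcal I$. Define $\tilde\pi(r):=\inf\{\pi(r):\pi\in\mathcal C\}$; by the bound above this is a well-defined real-valued function with $\phi\le\tilde\pi\le\bar\pi$. The only thing to check is that $(\psi,\tilde\pi)$ is still a cut-generating pair, i.e.\ that $\tilde\pi$ is a lifting of $\psi$. Given any feasible $(s,y)\in X_S(R,P)$, only finitely many integer columns $p_j$ occur; for $\eps>0$, for each such $j$ pick an element of $\mathcal C$ that undercuts $\tilde\pi(p_j)$ by at most $\eps$, and since $\mathcal C$ is totally ordered there is a single $\pi^*\in\mathcal C$ lying below all of them, so $\pi^*(p_j)\le\tilde\pi(p_j)+\eps$ for all relevant $j$. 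Then
\[
\sum_i\psi(r_i)s_i+\sum_j\tilde\pi(p_j)y_j\ \ge\ \Bigl(\sum_i\psi(r_i)s_i+\sum_j\pi^*(p_j)y_j\Bigr)-\eps\sum_j y_j\ \ge\ 1-\eps\sum_j y_j,
\]
and letting $\eps\to 0$ (with $\sum_j y_j$ fixed) yields the valid inequality. Hence $\tilde\pi\in\mathcal I$ is a lower bound for $\mathcal C$, Zorn's lemma applies, and a minimal element of $\mathcal I$ is the desired minimal lifting dominating $\bar\pi$. The one mildly delicate point is the last step—that the pointwise infimum over a chain is again a lifting—but it is routine once one observes that each instance involves only finitely many integer columns and the chain is totally ordered; everything else is a direct transcription of the proof of Proposition~\ref{prop:minimality}.
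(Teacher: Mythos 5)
Your proof is correct and follows essentially the same route as the paper: Zorn's lemma applied to the set of liftings dominated by $\bar\pi$, made legitimate by a fixed pointwise lower bound coming from a feasible instance built on a point $s^*\in S$ (the paper uses $\phi(r)=1-\psi(s^*-r)$ from a mixed instance with one continuous and one integer column, you use $1-\bar\pi(s^*-r)$ from a pure-integer instance; either works). The only difference is that you spell out the verification that the pointwise infimum over a chain is again a lifting --- using finiteness of the columns and the total order on the chain --- a step the paper's template proof of Proposition~\ref{prop:minimality} declares ``easy to verify.''
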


\begin{proof}
 Fix $s^* \in S$ which is nonempty. For any lifting $\pi$ of $\psi$, we must have $\psi(s^* - r) + \pi(r) \geq 1$ and therefore, if we define $\phi(r) = 1 - \psi(s^* - r)$, we have that $\pi(r) \geq \phi(r)$. The proof idea of Proposition~\ref{prop:minimality} can again be used to show that every lifting is dominated by a minimal lifting.
\end{proof}

\begin{prop}\label{prop:periodic}
Let $S\subseteq \R^n\setminus \{0\}$ and let $\psi$ be a cut-generating function for $S$. Every minimal lifting of $\psi$ is periodic along $W_S$.
\end{prop}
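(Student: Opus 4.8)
The plan is to fix a single $w\in W_S$ and prove that $\pi(p+w)=\pi(p)$ for every $p\in\R^n$; since $W_S$ is a subgroup of $\R^n$, establishing this for every $w\in W_S$ is exactly the assertion that $\pi$ is periodic along $W_S$. The engine of the argument is that $\pi$, being a \emph{minimal} lifting, must coincide with any lifting of $\psi$ that it dominates, so it suffices to exhibit one such lifting obtained from $\pi$ by shifting by $w$.

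Concretely, define $\pi'(p):=\min\{\pi(p),\,\pi(p+w)\}$, a real-valued function since $\pi:\R^n\to\R$. I would first show $(\psi,\pi')$ is a cut-generating pair. Take arbitrary $k,\ell\in\Z_+$, matrices $R\in\R^{n\times k}$ and $P\in\R^{n\times\ell}$ with columns $p_1,\dots,p_\ell$, and a point $(s,y)\in X_S(R,P)$. For each $j$ choose $\eps_j\in\{0,1\}$ with $\pi(p_j+\eps_j w)=\pi'(p_j)$ and let $P'$ be the matrix with columns $p_j+\eps_j w$. Then
\[
Rs+P'y=Rs+Py+\Big(\textstyle\sum_{j=1}^{\ell}\eps_j y_j\Big)w .
\]
Because $\sum_{j}\eps_j y_j\in\Z_+\subseteq\Z$, $Rs+Py\in S$, and $w\in W_S$ (see~\eqref{eq:W}), the right-hand side lies in $S$, i.e.\ $(s,y)\in X_S(R,P')$. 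Since $(\psi,\pi)$ is a cut-generating pair, $\sum_i\psi(r_i)s_i+\sum_j\pi(p_j+\eps_j w)y_j\ge 1$ holds at $(s,y)$; by the choice of the $\eps_j$ this is precisely $\sum_i\psi(r_i)s_i+\sum_j\pi'(p_j)y_j\ge 1$. As $(s,y)$, $R$, and $P$ were arbitrary, $\pi'$ is a lifting of $\psi$.

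Now $\pi'\le\pi$ pointwise and $\pi$ is a minimal lifting, so $\pi'=\pi$; this says exactly that $\pi(p)\le\pi(p+w)$ for all $p\in\R^n$. Applying the same reasoning with $-w$ in place of $w$ (permissible since $W_S$ is a group, so $-w\in W_S$) gives $\pi(p)\le\pi(p-w)$ for all $p$, equivalently $\pi(p+w)\le\pi(p)$ for all $p$; combining the two inequalities yields $\pi(p+w)=\pi(p)$. I expect the only genuinely delicate point to be the bookkeeping in the middle step: recognizing that replacing each column $p_j$ by $p_j+\eps_j w$ perturbs $Rs+Py$ by the \emph{integer} multiple $\big(\sum_j\eps_j y_j\big)w$ of $w$, which is exactly the kind of perturbation the definition of $W_S$ is built to absorb, and then remembering that the group structure of $W_S$ must be invoked (via $-w$) to turn the one-sided inequality handed to us by minimality into honest periodicity. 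Using a minimum of two values rather than an infimum over $\lambda\in\Z$ keeps $\pi'$ manifestly real-valued and avoids any question of attainment.
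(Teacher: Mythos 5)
Your proof is correct. The engine is the same as the paper's --- replace each column $p_j$ of $P$ by $p_j+\varepsilon_j w$, observe that $Rs+Py$ is thereby perturbed by the integer multiple $\bigl(\sum_j \varepsilon_j y_j\bigr)w$, which the definition \eqref{eq:W} of $W_S$ absorbs, and then invoke minimality --- but your construction of the dominated lifting is different and, I think, cleaner. The paper argues by contradiction: it lowers $\pi$ at a \emph{single} point $\hat p$ to the value $\pi(\hat p+w)$ and must then run a three-way case analysis on whether $\hat p$ and $\hat p+w$ appear among the columns of $P$, including a preliminary step to make the columns distinct. Your global shift-and-min $\pi'(p)=\min\{\pi(p),\pi(p+w)\}$ makes the verification uniform over all columns at once, sidesteps the column-bookkeeping entirely (repeated columns in $P'$ are harmless since the cut-generating-pair property is required for \emph{all} matrices), and converts the conclusion of minimality into the one-sided inequality $\pi(p)\le\pi(p+w)$, which you correctly symmetrize using $-w\in W_S$. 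The paper instead builds the symmetrization into its initial WLOG choice $\pi(\hat p)>\pi(\hat p+w)$. Both arguments are sound; yours trades the pointwise perturbation for a global one and gains a shorter, case-free verification.
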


\begin{proof} Let $\pi$ be a minimal lifting of $\psi$. Assume to the contrary that $\pi$ is not periodic along $W_S$. Therefore, there exists some $\hat{p}\in \R^n$ and $w\in W_S$ such that $\pi(\hat{p})\neq \pi(\hat{p}+w)$. Since $-w\in W_S$, we may assume $\pi(\hat{p})>\pi(\hat{p}+w)$. Define a function $\tilde{\pi}:\R^n\to \R$ by $\tilde{\pi}(p)=\pi(\hat{p}+w)$ if $p=\hat{p}$, and $\tilde{\pi}(p)=\pi(p)$ otherwise. If $\tilde{\pi}$ is a lifting of $\psi$, then we will have $\pi$ is not minimal, yielding a contradiction. Hence, it is sufficient to show that $\tilde\pi$ is a lifting of $\psi$.

Take $k,l\in \Z_+, R\in \R^{n\times k}$, and $P\in \R^{n\times l}$. We must show that $\eqref{psi pi ineq}$ holds for all $(s,y)\in X_S(R,P)$, so take $(s,y)\in X_S(R,P)$. Note that the columns of $P$ may be taken to be distinct by adding the components of $y$ that correspond to equal columns. Consider three cases.

\emph{Case 1:} Suppose that $P$ does not contain $\hat{p}$ as one of its columns. Then
\begin{equation*}
\sum_{i=1}^k\psi(r_i)s_i + \sum_{j=1}^\ell\tilde{\pi}(p_j)y_j = \sum_{i=1}^k\psi(r_i)s_i + \sum_{j=1}^\ell\pi(p_j)y_j\ge 1,
\end{equation*}
where the inequality arises since $\pi$ is a lifting of $\psi$.

\emph{Case 2:} Suppose that $P$ contains $\hat{p}$ as one of its columns, but not $\hat{p}+w$. Let $P^o$ and $y^o$ be the columns and values of $P$ and $y$, respectively, that do not correspond to $\hat{p}$. Let $y_{\hat j}$ be the component of $y$ corresponding to $\hat p$. Using the definition of $W_S$ and the fact that $y_{\hat j} \in \Z_+$, it follows that
\begin{equation*}
Rs+Py = Rs+P^oy^o+\hat{p}y_{\hat{j}}\in S \iff Rs+P^oy^o+\hat{p}y_{\hat{j}}+wy_{\hat{j}} = Rs+P^oy^o+(\hat{p}+w)y_{\hat{j}} \in S.
\end{equation*}
If we define $P'\in \R^{n\times k}$ to be the columns of $P^o$ adjoined with $\hat{p}+w$, then the equivalence above implies
\begin{equation*}
\sum_{i=1}^k\psi(r_i)s_i + \sum_{j=1}^\ell\tilde{\pi}(p_j)y_j = \sum_{i=1}^k\psi(r_i)s_i + \sum_{j=1,j\neq \hat{j}}^\ell\pi(p_j)y_j + \pi(\hat{p}+w)y_{\hat{j}}\ge 1,
\end{equation*}
where the inequality arises since $\pi$ is a lifting of $\psi$ and we can apply the cut-generating pair $(\psi, \pi)$ to $(s, (y^o, y_{\hat j})) \in X_S(R,P')$.

\emph{Case 3:} Suppose that $P$ contains $\hat{p}$ and $\hat{p}+w$ as columns. Using a similar argument as above, define $P'$ to be the columns of $P$ without $\hat{p}$. This yields the same inequality as Case 2. \end{proof}


\begin{prop}\label{prop:psi-ast}
Let $S\subseteq \R^n\setminus \{0\}$ and let $\psi$ be a cut-generating function for $S$. If $R_\psi + W_S = \R^n$, then $\psi^\ast$ defined in~\eqref{eq:formula-for-lifting} is a minimal lifting and $\psi^\ast(r) = \psi(r + w)$ for any $w$ such that $r + w \in R_\psi$.
\end{prop}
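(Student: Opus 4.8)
The plan is to establish two things: first, that $\psi^\ast$ as defined by $\psi^\ast(r) = \inf_{w \in W_S} \psi(r+w)$ is a valid lifting of $\psi$; second, that it is actually \emph{minimal} and coincides with $\psi$ on $R_\psi$-translates as claimed. The key structural input is the hypothesis $R_\psi + W_S = \R^n$, together with the fact (Proposition~\ref{prop:periodic}) that every minimal lifting $\pi$ is periodic along $W_S$, and the observation (already used in the discussion preceding the statement) that $\pi \leq \psi$ pointwise for every minimal lifting $\pi$, since $\psi$ itself is a (trivial) lifting of $\psi$ and minimal liftings dominate.

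\emph{Step 1: $\psi^\ast$ is a lifting.} First I would check that the infimum is attained and equals a specific value. Fix any minimal lifting $\pi$ of $\psi$ (one exists by Proposition~\ref{prop:minimality-lifting}). For any $r \in \R^n$ and any $w \in W_S$, periodicity gives $\pi(r) = \pi(r+w) \leq \psi(r+w)$, so $\pi(r) \leq \inf_{w} \psi(r+w) = \psi^\ast(r)$. On the other hand, using $R_\psi + W_S = \R^n$, pick $w_0$ with $r + w_0 \in R_\psi$; by definition of $R_\psi$, $\psi(r+w_0) = \pi(r+w_0) = \pi(r)$. Hence $\psi^\ast(r) = \pi(r)$ for all $r$, i.e. $\psi^\ast \equiv \pi$. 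In particular $\psi^\ast$ is exactly the minimal lifting $\pi$, so it is a lifting, it is minimal, and the infimum is attained by any $w$ with $r+w \in R_\psi$. This also shows \emph{en passant} that the minimal lifting is unique, recovering the assertion in the text.

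\emph{Step 2: the formula $\psi^\ast(r) = \psi(r+w)$.} This is immediate from Step 1: if $r + w \in R_\psi$, then $\psi(r+w) = \pi(r+w)$ (definition of $R_\psi$) $= \pi(r)$ (periodicity of the minimal lifting $\pi$) $= \psi^\ast(r)$ (Step 1). So the value can be read off $\psi$ directly once a lattice translate landing in $R_\psi$ is located.

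\emph{Main obstacle.} The only genuinely delicate point is the very first inequality $\pi(r) \leq \psi^\ast(r)$: it requires knowing that \emph{every} minimal lifting is dominated by $\psi$ pointwise and is $W_S$-periodic, which is exactly what Propositions~\ref{prop:minimality-lifting} and~\ref{prop:periodic} supply, so this is not really an obstacle given the earlier results. One should be a little careful that $\psi^\ast$ is real-valued (not $-\infty$), but this follows since $\psi^\ast = \pi$ and $\pi : \R^n \to \R$. A secondary subtlety is that a priori one does not know the minimal lifting is unique; the argument sidesteps this by showing any minimal lifting $\pi$ equals $\psi^\ast$, which simultaneously proves uniqueness. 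Thus the proof is short and essentially a bookkeeping exercise once the preparatory propositions are in hand.
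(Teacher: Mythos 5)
Your proof reaches the same conclusion by a genuinely different (and arguably more economical) route than the paper. The paper's proof first asserts that $\psi^\ast$ is a lifting (this needs its own small argument: given $(s,y)\in X_S(R,P)$ and shifts $w_j\in W_S$, one has $Rs+\sum_j(p_j+w_j)y_j\in S$ because $y_jw_j\in W_S$, so $\sum_i\psi(r_i)s_i+\sum_j\psi(p_j+w_j)y_j\ge 1$ after treating the shifted columns as continuous; taking infima over the $w_j$ gives the claim), and then establishes only the one-sided bound $\pi\ge\psi^\ast$ for an arbitrary minimal lifting $\pi$, concluding $\pi=\psi^\ast$ from minimality of $\pi$. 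You instead prove the two-sided identity $\psi^\ast\equiv\pi$ directly, which lets you inherit ``$\psi^\ast$ is a minimal lifting'' from $\pi$ for free and simultaneously yields uniqueness of the minimal lifting. That is a clean trade, and the second inequality $\psi^\ast(r)\le\psi(r+w_0)=\pi(r+w_0)=\pi(r)$ is exactly the paper's argument.

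The one step you should repair is the justification of $\pi(r+w)\le\psi(r+w)$. Knowing that $\psi$ is itself a lifting and that every lifting is dominated by a minimal one only tells you that \emph{some} minimal lifting lies below $\psi$, not that the particular minimal lifting $\pi$ you fixed does. The correct argument is: if $\pi$ is any lifting, then $\min(\pi,\psi)$ is again a lifting --- for any instance, move each integer column $p_j$ with $\psi(p_j)<\pi(p_j)$ into the continuous part of the model, which is legitimate since $\Z_+\subseteq\R_+$, and apply the cut-generating pair $(\psi,\pi)$ to the modified instance --- so minimality of $\pi$ forces $\pi\le\psi$ pointwise. With this fix your Step 1 is complete. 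Note that the paper's proof sidesteps the need for $\pi\le\psi$ entirely by using the equality $\pi(r+w)=\psi(r+w)$ at the single point $r+w\in R_\psi$ (which is just the definition of $R_\psi$) rather than an inequality at all $w$.
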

\begin{proof}
It is not hard to verify that $\psi^\ast$ is a lifting of $\psi$. Consider any minimal lifting $\pi$. Consider any $r \in \R^n$ and let $w \in W_S$ such that $r + w \in R_\psi$. By Proposition~\ref{prop:periodic}, $\pi(r) = \pi(r+w) = \psi(r+w) \geq \psi^*(r)$. This implies that $\pi(r) = \psi^*(r) = \psi(r+w)$ since $\pi$ is a minimal lifting.\end{proof}

\begin{proof}[Proof of Proposition~\ref{prop:W-manipulation}] ~
\begin{itemize}
\item[(i)]  Let $M:\R^n\to\R^n$ be an invertible linear transformation and $m\in \R^n$. Note that
\begin{align*}
W_{M(S)+m} &= \left\{w\in \R^n: (M(s)+m)+\lambda w\in M(S)+m, \forall~ s \in S, ~\lambda\in \Z \right\}\\
& = \left\{w\in \R^n: M(s)+\lambda w\in M(S), \forall~ s \in S, ~\lambda\in \Z \right\}\\
& = \left\{w\in \R^n: s+\lambda M^{-1}(w)\in S, \forall~ s\in S, ~\lambda\in \Z \right\}\\
& = \left\{w\in \R^n: M^{-1}(w)\in W_S\right\}\\
& = \left\{w\in \R^n: w\in M(W_S)\right\}\\
& = M(W_S).
\end{align*}
\item[(ii)] Note that
\begin{align*}
 (x_1, x_2) \in W_{S_1\times S_2} & \iff (s_1+\lambda x_1, s_2+\lambda x_2) \in S_1\times S_2, ~\forall (s_1, s_2)\in S_1\times S_2, ~\forall \lambda\in \Z\\
& \iff s_i+\lambda x_i\in S_i, ~\forall i\in \{1,2\}, ~\forall s_i\in S_i, ~\forall \lambda\in \Z\\
& \iff (x_1, x_2) \in W_{S_1}\times W_{S_2}.
\end{align*}
\end{itemize}
\end{proof}

\end{document}